\newtheorem{theorem}{Theorem}[section]
\newtheorem{corollary}[theorem]{Corollary}
\newtheorem{lemma}[theorem]{Lemma}
\newtheorem{proposition}[theorem]{Proposition}
\newtheorem*{Gtheorem}{Theorem \ref{Gtheorem}}
\newtheorem*{Htheorem}{Theorem \ref{hardcase}}
\newtheorem*{Mcorollary}{Corollary \ref{maincor}}
\newtheorem*{Lcorollary}{Corollary \ref{limit2}}
\theoremstyle{definition}
\newtheorem{definition}[theorem]{Definition}
\newtheorem{example}[theorem]{Example}
\newtheorem{remark}[theorem]{Remark}
\begin{document}
\title[Extender sets and measures of maximal entropy for subshifts]
{Extender sets and measures of maximal entropy for
subshifts}
\date{}
\author{Felipe García-Ramos}
\address{Felipe García-Ramos\\
CONACyT \& Physics Institute of the Universidad Aut\'{o}noma de San Luis Potos\'{\i}\\
Av. Manuel Nava \#6, Zona Universitaria, C.P. 78290 \\
San Luis Potosí, S.L.P.\\
Mexico}
\email{fgramos@conacyt.mx}
\author{Ronnie Pavlov}
\address{Ronnie Pavlov\\
Department of Mathematics\\
University of Denver \\
2390 S. York St. \\
Denver, CO 80208 \\
USA}
\email{rpavlov@du.edu}
\thanks{The second author gratefully acknowledges the support of NSF grant DMS-1500685.}
\keywords{Symbolic dynamics, measure of maximal entropy, extender set, synchronized subshift}
\subjclass[2010]{37B10, 37B40, 37D35}

\begin{abstract}
For countable amenable finitely generated torsion-free $\mathbb{G}$, we prove inequalities relating $\mu(v)$ and $\mu(w)$ for
any measure of maximal entropy $\mu$ on a $G$-subshift and any
words $v, w$ where the extender set of $v$
is contained in the extender set of $w$. Our main results are two generalizations
of the main result of \cite{M}; the first applies to all such $v,w$ when $\mathbb{G} = \mathbb{Z}$, 
and the second to $v,w$ with the same shape for any $\mathbb{G}$.
As a consequence of our results we give new and simpler proofs of several facts about 
synchronizing subshifts (including the main result from \cite{Th}) and we answer a question of Climenhaga.
\end{abstract}


\maketitle

\section{Introduction}

\label{intro}

In this paper, we prove several results about measures of maximal entropy on symbolic dynamical systems (subshifts). Measures of maximal entropy are natural measures, defined via the classical Kolmogorov-Sinai entropy, which also connect to problems in statistical physics, such as existence of phase transitions. 

Our dynamical systems are subshifts, which consist of a compact $X \subseteq \mathcal{A}^{\mathbb{G}}$ (for some finite alphabet $\mathcal{A}$ and a countable amenable finitely generated torsion-free group $\mathbb{G}$) and dynamics given by the $\mathbb{G}$-action of translation/shift maps $\{\sigma_{g}\}_{g \in\mathbb{G}}$ (under which $X$ must be invariant). Subshifts are useful both as discrete models for the behavior of dynamical systems on more general spaces, and as an interesting class of dynamical systems in their own right, with applications in physics and information theory. 

Our main results show that when a word $v$ (i.e. an element of $\mathcal{A}^F$ for some finite $F \subset \mathbb{G}$) is replaceable by another word $w$ in $X$ (meaning that $\forall x \in X$, when any occurrence of $v$ is replaced by $w$, the resulting point is still in $X$), there is a simple inequality relating $\mu(v)$ and $\mu(w)$ for every measure of maximal entropy $\mu$. (As usual, the measure of a finite word is understood to mean the measure of its cylinder set; see Section~\ref{defs} for details.) A formal statement of our hypothesis uses extender sets (\cite{KM}, \cite{OP}); the condition ``$v$ is replaceable by $w$'' is equivalent to the containment $E_{X}(v) \subseteq E_{X}(w)$, where $E_X(u)$ denotes the extender set of a word $u$.
 
 

For $\mathbb{Z}$-subshifts specifically, it is possible to talk about
replacing $v$ by $w$ (and thereby the containment $E_X(v) \subseteq E_X(w)$)
even if their lengths $|v|$ and $|w|$ are different, and our first results
treat this case. 

\begin{Htheorem}
	Let $X$ be a $\mathbb{Z}$-subshift with positive topological entropy, $\mu$ a measure of maximal entropy of $X$%
	,  and $w,v\in L(X)$. If $E_{X}(v)\subseteq E_{X}(w)$, then 
	\begin{equation*}
	\mu(v)\leq\mu(w)e^{h_{top}(X)(|w|-|v|)}. 
	\end{equation*}
\end{Htheorem}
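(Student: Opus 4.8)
The plan is to deduce the inequality from a single combinatorial monotonicity — the hypothesis $E_X(v)\subseteq E_X(w)$ survives restriction to all finite windows — combined with the Shannon--McMillan--Breiman (SMB) theorem for the measure of maximal entropy. First I would reduce to the case that $\mu$ is ergodic: the ergodic components of a measure of maximal entropy are again measures of maximal entropy (their entropies average to $h_{top}(X)$ and none may exceed it, so almost every component attains $h_{top}(X)$), while $\mu\mapsto\mu(v)$ and $\mu\mapsto\mu(w)$ are affine, so proving the bound componentwise and integrating suffices. Writing $a=|v|$, $b=|w|$, and $h=h_{top}(X)$, I would rephrase the goal in the symmetric form $\mu(v)\,e^{ha}\le \mu(w)\,e^{hb}$, so that the quantity shown to be monotone under extender containment is $g(u):=\mu(u)e^{h|u|}$.

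The easy, purely combinatorial input is the following. For $m,k\ge 0$ let $E_X^{m,k}(u)$ denote the set of context pairs $(s,t)$ with $|s|=m$, $|t|=k$, and $sut\in L(X)$. If $(s,t)\in E_X^{m,k}(v)$, extend $svt$ to a point $x^-svtx^+\in X$; then $(x^-s,\,tx^+)\in E_X(v)\subseteq E_X(w)$, so $x^-swtx^+\in X$ and hence $swt\in L(X)$. Thus $E_X^{m,k}(v)\subseteq E_X^{m,k}(w)$ for all $m,k$, and in particular $(s,t)\mapsto(s,t)$ injectively pairs each admissible extension $svt$ of $v$ with the admissible extension $swt$ of $w$ sharing the \emph{same} context; in cardinalities, $|E_X^{m,k}(v)|\le |E_X^{m,k}(w)|$.

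The analytic heart is to convert these context statements into statements about $\mu$. Fix $\varepsilon>0$ and set $N=m+a+k$. SMB furnishes a family of ``typical'' length-$N$ words of total $\mu$-measure $\ge 1-\varepsilon$ (for $N$ large), each of measure in $[e^{-N(h+\varepsilon)},e^{-N(h-\varepsilon)}]$, while the ``heavy'' words (measure $>e^{-N(h-\varepsilon)}$) carry measure tending to $0$. Expanding $\mu(v)=\sum_{(s,t)\in E_X^{m,k}(v)}\mu([svt])$ and discarding the heavy part yields $\mu(v)\le |E_X^{m,k}(v)|\,e^{-N(h-\varepsilon)}+o(1)$, whence by the injection $\mu(v)\le |E_X^{m,k}(w)|\,e^{-N(h-\varepsilon)}+o(1)$. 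The decisive missing step is a matching lower bound $\mu(w)\gtrsim |E_X^{m,k}(w)|\,e^{-N'h}$ with $N'=m+b+k=N+(b-a)$ — equivalently, that $\mu$ spreads the mass of $w$ nearly uniformly over its admissible extensions, so that typical $swt$ have measure $\approx e^{-N'h}$. Granting this, the relation $e^{-Nh}=e^{-N'h}e^{h(b-a)}$ chains the two bounds into $\mu(v)\le \mu(w)\,e^{h(b-a)}e^{O(\varepsilon N)}+o(1)$.

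I expect this last lower bound to be the main obstacle, and the only place where the \emph{maximality} of the entropy of $\mu$ is genuinely needed. Monotonicity of $g$ cannot come from counting alone: a word can be topologically abundant (large $|E_X^{m,k}(\cdot)|$) yet $\mu$-rare, and for a general invariant measure $\mu(w)e^{N'h}$ need not dominate $|E_X^{m,k}(w)|$. What should rescue the argument is that a measure of maximal entropy cannot concentrate its conditional mass on few extensions without losing entropy, so the conditional law of a long window given a central occurrence of $w$ must be close to equidistributed on the admissible extensions; quantifying this — e.g.\ by comparing $\mu$ against the competitor obtained by re-equidistributing mass over extensions, or by a uniform lower bound on cylinder measures along $\mu$-typical extensions — is what should force the errors to be genuinely sub-exponential and produce the clean constant $e^{h(|w|-|v|)}$. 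A second, intertwined subtlety is that the factor $e^{O(\varepsilon N)}$ does not vanish for fixed $\varepsilon$ as the window grows, so one must couple $\varepsilon$ to $N$ (or replace the crude SMB band by sharper large-deviation control) to send $m,k\to\infty$ and $\varepsilon\to 0$ in a compatible order.
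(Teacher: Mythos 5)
There is a genuine gap, and you have in fact located it yourself: the entire proof hinges on the lower bound $\mu(w)\gtrsim |E_X^{m,k}(w)|\,e^{-(m+|w|+k)h}$, which you only ``grant'' and then gesture at via an equidistribution heuristic. This step is not merely unproven --- as stated it is false for general measures of maximal entropy. An MME need not be Gibbs and need not be fully supported: take $X=\{0,1\}^{\mathbb{Z}}\cup\{2^{\infty}\}$, whose unique MME is the Bernoulli measure on $\{0,1\}^{\mathbb{Z}}$; then $w=2$ has $\mu(w)=0$ while $|E_X^{m,k}(w)|\ge 1$, so no bound of the form $\mu(w)\ge c\,|E_X^{m,k}(w)|e^{-N'h}$ can hold. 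Even for fully supported MMEs on general (non-SFT) subshifts, the conditional law of a long window given a central $w$ can be far from equidistributed on the admissible extensions, and quantifying ``cannot concentrate without losing entropy'' at the level of a \emph{single fixed word} $w$ is essentially as hard as the theorem itself. Your upper bound $\mu(v)\le |E_X^{m,k}(v)|e^{-N(h-\varepsilon)}+o(1)$ is also too lossy on its own (its right-hand side can be of order $e^{\varepsilon N}$), so even the half of the sandwich you do justify does not survive the limit without the coupling of $\varepsilon$ to $N$ that you defer.

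The paper's proof avoids conditioning on extensions entirely and instead runs a replacement argument: it takes $\mu$-typical words $u$ of length $n$ (which, by the pointwise ergodic theorem, contain about $n\mu(v)$ occurrences of $v$ and $n\mu(w)$ occurrences of $w$, and of which there are about $e^{nh}$ by Shannon--McMillan--Breiman), replaces a small density $\varepsilon n$ of the occurrences of $v$ by $w$ (legal because $E_X(v)\subseteq E_X(w)$), and shows via injectivity and preimage-counting lemmas that the number of distinct words of length $n+\varepsilon n(|w|-|v|)$ so produced would exceed $|L_{n+\varepsilon n(|w|-|v|)}(X)|$ unless $\mu(v)\le\mu(w)e^{h(|w|-|v|)}$. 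The only input about $\mu$ is thus SMB plus the ergodic theorem applied to the frequencies of $v$ and $w$ --- no Gibbs-type lower bound is ever needed. The price is a substantial amount of combinatorics about overlapping occurrences (the notion of $v$ ``respecting the transition'' to $w$, and the passage from special pairs to arbitrary pairs via extensions $\alpha v\beta$, $\alpha w\beta$), which your approach does not engage with but also does not replace. To salvage your outline you would need to supply the missing equidistribution estimate, and I do not see how to do so without essentially reproving the theorem by some such counting argument.
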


\begin{Mcorollary}
	Let $X$ be a $\mathbb{Z}$-subshift with positive topological entropy, $\mu$ a measure of maximal entropy of $X$%
	, and $w,v\in L(X).$ If $E_{X}(v)=E_{X}(w)$, then for every measure of
	maximal entropy of $X$, 
	\begin{equation*}
	\mu(v)=\mu(w)e^{h_{top}(X)(|w|-|v|)}. 
	\end{equation*}
\end{Mcorollary}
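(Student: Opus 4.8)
The plan is to derive this corollary directly from the preceding Theorem \ref{hardcase} by applying it twice, once in each direction. The hypothesis $E_X(v) = E_X(w)$ is symmetric and unpacks into the two containments $E_X(v) \subseteq E_X(w)$ and $E_X(w) \subseteq E_X(v)$, so the inequality furnished by the theorem is available with the roles of $v$ and $w$ taken in either order.

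First I would apply the theorem as stated, using $E_X(v) \subseteq E_X(w)$, to obtain
\[
\mu(v) \leq \mu(w)\, e^{h_{top}(X)(|w|-|v|)}.
\]
Then I would apply it again with $v$ and $w$ interchanged, using the reverse containment $E_X(w) \subseteq E_X(v)$, to obtain
\[
\mu(w) \leq \mu(v)\, e^{h_{top}(X)(|v|-|w|)}.
\]

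The only computation is to reconcile these two bounds. Multiplying the second inequality through by $e^{h_{top}(X)(|w|-|v|)}$ and noting that the exponents cancel, since $(|v|-|w|) + (|w|-|v|) = 0$, turns it into
\[
\mu(w)\, e^{h_{top}(X)(|w|-|v|)} \leq \mu(v).
\]
Combining this with the first inequality pins $\mu(v)$ between two equal quantities, forcing
\[
\mu(v) = \mu(w)\, e^{h_{top}(X)(|w|-|v|)},
\]
which is the claimed equality. Since both steps are immediate invocations of an already-established result, there is no genuine obstacle here: all of the substance resides in Theorem \ref{hardcase}, and the corollary merely records that equality of extender sets promotes its one-sided inequality to an exact equality by symmetry.
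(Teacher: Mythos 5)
Your proposal is correct and is exactly the argument the paper intends: the paper states the corollary is immediate from Theorem \ref{hardcase}, meaning one applies that theorem to both containments $E_X(v) \subseteq E_X(w)$ and $E_X(w) \subseteq E_X(v)$ and combines the resulting inequalities, just as you do.
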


In the class of synchronized subshifts (see Section 3.1 for the definition), $E_{X}(v)=E_{X}(w)$ holds for many
pairs of words of different lengths, in which case Corollary~\ref{maincor}
gives significant restrictions on the measures of maximal entropy. In Section~\ref{apps}, we use
Corollary~\ref{maincor} to obtain results about synchronized subshifts.
These applications include a new proof of uniqueness of measures of maximal entropy under the hypothesis
of entropy minimality (see Theorem~\ref{synchunique}), which was previously shown in \cite{Th} via the much more difficult machinery of countable-state Markov shifts, and the following result which verifies a conjecture of Climenhaga (\cite{Cl}). (Here, $X_S$ represents a so-called $S$-gap subshift; see Definition~\ref{Sgap}.)

\begin{Lcorollary}Let $S\subseteq\mathbb{N}$ satisfy $\gcd(S+1)=1$, let
	$\mu$ be the unique MME on $X_{S}$, and let $\lambda = e^{h_{top}(X_{S})}$. Then 
	$\displaystyle\lim_{n\rightarrow\infty}\frac{\left\vert L_{n}(X_{S})\right\vert
	}{\lambda^n}$ exists and is equal to
	$\displaystyle \frac{\mu(1)\lambda}{(\lambda-1)^{2}}$ when $S$ is infinite and
	$\displaystyle \frac{\mu(1)\lambda (1 - \lambda^{-(\max S) - 1})^2}{(\lambda-1)^{2}}$ when $S$ is finite. 
\end{Lcorollary}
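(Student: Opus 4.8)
The plan is to count words in $L_n(X_S)$ via a ``first-$1$/last-$1$'' decomposition, to evaluate the measures appearing in that count using Corollary~\ref{maincor}, and then to pass to the limit using mixing of the MME together with a dominated-convergence argument for series.

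First I would fix the combinatorial decomposition. Every $w\in L_n(X_S)$ containing at least one symbol $1$ can be written uniquely as $0^a\,u\,0^b$, where $u$ begins and ends with $1$, $a$ is the number of leading zeros, and $b$ the number of trailing zeros; the core $u$ is in turn the unique concatenation $1 0^{g_1}1\cdots 1 0^{g_{k-1}} 1$ with each $g_i\in S$. Writing $c_m$ for the number of length-$m$ words beginning and ending with $1$, and $r_t$ for the number of pairs $(a,b)$ with $0\le a,b\le \sup S$ and $a+b=t$ (the exact extendability constraint on the leading/trailing runs being $a,b\le\sup S$), one obtains
\[
|L_n(X_S)| = \Big(\sum_{t\ge 0} r_t\, c_{n-t}\Big) + \varepsilon_n,
\]
where $\varepsilon_n$ only accounts for the all-zero word $0^n$ and is $O(1)$, hence negligible after dividing by $\lambda^n$. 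Setting $R(x)=\sum_{a=0}^{\sup S} x^a$, the identity $\sum_t r_t x^t = R(x)^2$ records that the leading and trailing runs are chosen independently.

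The key input from the main results is an extender-set computation: for any word $u$ that begins and ends with $1$ one has $E_{X_S}(u)=E_{X_S}(1)$. Indeed, gluing a left context $L$ and right context $R$ around such a $u$ is admissible exactly when the gap created between the last $1$ of $L$ and the first $1$ of $u$ lies in $S$, and symmetrically on the right; since $u$ starts and ends with $1$, these are precisely the two conditions that govern gluing around the single symbol $1$. Corollary~\ref{maincor} then gives $\mu(u)=\mu(1)\,\lambda^{\,1-|u|}$ for every such $u$, so $\mu(u)$ depends only on $|u|$. Summing over the $c_m$ words of length $m$ beginning and ending with $1$, which are exactly the cylinder words contributing to $[1]_0\cap\sigma^{-(m-1)}[1]_0$, yields
\[
\mu\big([1]_0\cap\sigma^{-(m-1)}[1]_0\big)=c_m\,\mu(1)\,\lambda^{\,1-m}.
\]

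Now I would let $m\to\infty$. Because $\gcd(S+1)=1$, the subshift $X_S$ is topologically mixing and its unique MME is mixing, so the left-hand side tends to $\mu(1)^2$; as $h_{top}(X_S)>0$ forces $\mu(1)>0$, this shows $c_m/\lambda^m\to \mu(1)/\lambda$ (in particular the limit exists). Feeding this into the counting identity, I write $|L_n(X_S)|/\lambda^n = \sum_t r_t\lambda^{-t}\,(c_{n-t}/\lambda^{n-t}) + o(1)$; each summand converges to $r_t\lambda^{-t}\,\mu(1)/\lambda$, and since $c_m/\lambda^m$ is bounded while $\sum_t r_t\lambda^{-t}=R(1/\lambda)^2<\infty$, Tannery's theorem permits interchanging limit and sum to give
\[
\lim_{n\to\infty}\frac{|L_n(X_S)|}{\lambda^n}=R(1/\lambda)^2\,\frac{\mu(1)}{\lambda}.
\]
Evaluating $R(1/\lambda)=\sum_{a=0}^{\sup S}\lambda^{-a}$ yields $\lambda/(\lambda-1)$ when $S$ is infinite and $\lambda\big(1-\lambda^{-(\max S)-1}\big)/(\lambda-1)$ when $S$ is finite, which reproduces the two stated formulas. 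I expect the main obstacle to be justifying the mixing step cleanly: one must know that the unique MME of $X_S$ is measure-theoretically mixing, which I would draw from the synchronized-subshift results of Section~\ref{apps} (uniqueness and entropy minimality via Theorem~\ref{synchunique}) together with $\gcd(S+1)=1$. The extender-set identity and the interchange of limit and sum are comparatively routine, although the boundary bookkeeping for the leading/trailing zeros (and the harmless all-zero word) must be handled with the $\sup S$ cutoff in order to produce the finite-$S$ factor correctly.
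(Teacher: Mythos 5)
Your proposal is correct and follows essentially the same route as the paper: the paper proves a general statement (Theorem~\ref{limit}) for synchronized, entropy-minimal subshifts with mixing MME by decomposing $L_n(X)$ according to the first and last occurrence of the synchronizing word, computing the measure of the ``core'' words via Lemma~\ref{synchlem} and Corollary~\ref{maincor}, extracting $\lim_m |R_m|/\lambda^m$ (your $c_m/\lambda^m$) from mixing, and interchanging limit and sum, and Corollary~\ref{limit2} then just evaluates the prefix/suffix generating functions, which are exactly your $R(1/\lambda)^2$. Your direct bookkeeping for the leading/trailing zero runs (including the $\sup S$ cutoff in the finite case) reproduces the paper's evaluation of $|P_i|$ and $|S_i|$, so the two arguments coincide in substance.
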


\bigskip In fact, we prove that this limit exists for all
synchronized subshifts where the unique measure of maximal entropy is mixing.

Our second main result applies to countable amenable finitely generated torsion-free $\mathbb{G}$, but only to $v,w$ which have the same shape.
This is unavoidable in a sense, since in general, for $F \neq F'$, there will be no natural way to compare
the configurations with shapes $F^c$ and $F'^c$ in extender sets of words $v \in A^F$ and $w \in A^{F'}$ 
respectively. 



\begin{Gtheorem}
	Let $X$ be a $\mathbb{G-}$subshift, $\mu$ a measure of maximal entropy of $X$%
	, $F \Subset \mathbb{G}$, and $w,v\in\mathcal{A}^{F}$. If $E(v)\subseteq E(w)$
	then 
	\begin{equation*}
	\mu(v)\leq\mu(w). 
	\end{equation*}
\end{Gtheorem}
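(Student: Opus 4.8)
The starting point is to convert the hypothesis \(E(v)\subseteq E(w)\) into a concrete \emph{replacement operation}. If \(x\in X\) satisfies \(x|_F=v\), then its exterior \(x|_{\mathbb{G}\setminus F}\) lies in \(E(v)\subseteq E(w)\), so overwriting \(v\) by \(w\) on \(F\) (and leaving the exterior fixed) produces another point of \(X\). More generally, if \(g_1,\dots,g_k\) are such that the translates \(Fg_1,\dots,Fg_k\) are pairwise disjoint and each displays \(v\), then I may overwrite all of them by the corresponding translates of \(w\) and stay in \(X\): one performs the replacements one at a time, and disjointness guarantees that each intermediate point still carries \(v\) on the cells not yet treated, so the extender inclusion applies at every step. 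This ``free replaceability on disjoint families'' is the engine of the argument. I would first reduce to \(\mu\) ergodic, since the ergodic components of a measure of maximal entropy are again measures of maximal entropy and the desired inequality survives integration over the ergodic decomposition.

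The plan is then to argue by contradiction with the maximality of entropy. Fix a tempered F\o{}lner sequence \((F_n)\). By the pointwise ergodic theorem for amenable actions, for \(\mu\)-a.e.\ \(x\) the frequencies of \(v\) and of \(w\) among the cells \(\{Fg:g\in F_n\}\) converge to \(\mu(v)\) and \(\mu(w)\), so a typical window \(x|_{F_n}\) carries about \(|F_n|\mu(v)\) occurrences of \(v\). Extracting an equivariantly chosen disjoint subfamily of these occurrences (via an auxiliary independent Bernoulli coloring used to break ties), I would toggle each selected \(v\) to \(w\) independently with a parameter \(t\in[0,1]\); the resulting push-forward, projected back to \(X\), is a shift-invariant measure \(\nu_t\). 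The content of the argument is an entropy computation: the toggling injects a controlled amount of randomness, and optimizing over \(t\) shows that whenever \(\mu(v)>\mu(w)\) the optimal \(t\) is strictly positive and yields \(h_{\nu_t}>h_\mu=h_{top}(X)\), contradicting maximality. Equivalently, disintegrating \(\mu\) over the exterior \(\sigma\)-algebra \(\sigma(x|_{\mathbb{G}\setminus F})\) into conditional measures \(\mu_\eta\) on \(\mathcal{A}^F\), the argument amounts to showing \(\mu_\eta(v)\le\mu_\eta(w)\) for a.e.\ \(\eta\) (trivially so when \(\eta\notin E(v)\), since then \(\mu_\eta(v)=0\)), after which \(\mu(v)\le\mu(w)\) follows by integration.

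The same-shape hypothesis enters decisively here: because every occurrence of \(v\) and of \(w\) has the identical footprint \(F\), the toggle is a clean local modification with a uniform conflict structure, the footprints \(Fg\) and \(Fg'\) collide only when \(g'\in F^{-1}Fg\), and the combinatorial bookkeeping of disjoint families is exact; for words of different shapes no such uniform comparison is available. I expect the main obstacle to be precisely the interplay between \emph{overlaps} and \emph{sharpness}: the naive disjoint-subfamily extraction replaces only a positive fraction of occurrences and hence a priori loses a multiplicative constant, whereas the conclusion \(\mu(v)\le\mu(w)\) has no slack. Overcoming this requires tuning the equivariant construction so that the entropy gained from the toggles is accounted for \emph{exactly} against the Bernoulli entropy that was injected to select the disjoint family, so that the net first-order effect of moving mass from \(v\) to \(w\) is captured with no error term. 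Making this balance rigorous, and verifying that the push-forward \(\nu_t\) is genuinely shift-invariant with the entropy one expects, is the technical heart of the proof.
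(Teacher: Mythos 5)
Your high-level skeleton matches the paper's: reduce to ergodic $\mu$, use the pointwise ergodic theorem to find typical windows carrying about $|F_n|\mu(v)$ occurrences of $v$ and $|F_n|\mu(w)$ occurrences of $w$, pass to a disjoint (the paper's ``$F$-sparse'') subfamily of occurrences, and derive a contradiction with entropy maximality from the assumption $\mu(v)>\mu(w)$. But the step you yourself flag as ``the technical heart'' --- showing that the toggled measure $\nu_t$ has entropy exceeding $h_{top}(X)$ --- is precisely where the proposal has a genuine gap, and the idea that closes it is absent. The difficulty is not shift-invariance of $\nu_t$ (routine) but the \emph{lower} bound on $h_{\nu_t}$: the replacement map can collapse many source configurations onto one image, because at a selected site displaying $w$ in the image one cannot tell whether the original displayed $v$ (and was toggled) or already displayed $w$. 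The injected Bernoulli entropy can therefore be destroyed by the push-forward, and nothing in your outline rules this out. Your ``equivalent'' reformulation via conditional measures $\mu_\eta$ does not help: the claim $\mu_\eta(v)\le\mu_\eta(w)$ for a.e.\ $\eta\in E(v)$ is a relative Gibbs-type property that is \emph{not} known for measures of maximal entropy on general (non-SFT) subshifts; it is a strengthening of the statement being proved, so taking it as the target is circular rather than a reduction.

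The paper closes the gap combinatorially. First, no randomized equivariant selection is needed: Lemma~\ref{one} produces a finite-index subgroup $\mathbb{G}_N$ whose nonidentity elements move $F$ off itself, so each coset of $\mathbb{G}_N$ is automatically $F$-sparse, and a pigeonhole over the $M=\left|\faktor{\mathbb{G}}{\mathbb{G}_{N}}\right|$ cosets finds one in which the excess $|O_v(u)|-|O_w(u)|>3\delta|F_n|$ survives with constant $3\delta/M$. Your worry that restricting to a positive fraction of occurrences ``loses a multiplicative constant'' against a conclusion ``with no slack'' is misplaced: the argument is by contradiction from the strict inequality $\mu(v)>\mu(w)$, which supplies exactly the slack needed. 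Second, and crucially, the multiplicity of the replacement map is controlled by Lemma~\ref{Gpreimage}: a given image word $u'$ has at most $\binom{|O_w(u')\cap h|}{m}$ preimages, because the replaced sites must land on occurrences of $w$ in $u'$ and the source is then uniquely reconstructible. Since each replacement creates at most $|F\cdot F^{-1}|$ new occurrences of $w$, taking $\varepsilon$ small keeps the $w$-count in the image strictly below the $v$-count in the source, so the gain $\binom{|O_v(u)\cap h|}{m}$ strictly beats the loss and $\bigl|\bigcup_u A_u\bigr|$ exceeds $e^{|F_n|h_{top}(X)}$ asymptotically, the desired contradiction. This source-count-versus-image-count comparison is the ``exact accounting'' you say is needed; the proposal contains no mechanism for it.
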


As a direct consequence of this theorem we recover the following result due to Meyerovitch. 
\begin{theorem}[Theorem 3.1, \textrm{\protect\cite{M}}]
	\label{tomthm} If $X$ is a $\mathbb{Z}^{d}$-subshift and $v,w\in\mathcal{A}%
	^{F}$ satisfy $E_{X}(v)=E_{X}(w)$, then for every measure of maximal entropy 
	$\mu$ on $X$, $\mu(v)=\mu(w)$.
\end{theorem}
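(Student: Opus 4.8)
The plan is to deduce this statement directly from Theorem~\ref{Gtheorem}, which already furnishes the one-sided inequality $\mu(v) \le \mu(w)$ whenever $E_X(v) \subseteq E_X(w)$. First I would check that the hypotheses of Theorem~\ref{Gtheorem} apply in the present setting: the group $\mathbb{Z}^d$ is countable, amenable, finitely generated, and torsion-free, so it qualifies as an admissible $\mathbb{G}$; moreover $v, w \in \mathcal{A}^F$ already share the common shape $F \Subset \mathbb{Z}^d$, which is precisely the ``same shape'' hypothesis required by Theorem~\ref{Gtheorem}. Thus all the standing assumptions are satisfied, and only the interface between the two statements needs to be handled.

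Next, I would exploit the symmetry built into the equality $E_X(v) = E_X(w)$. Equality of sets is equivalent to the conjunction of the two containments $E_X(v) \subseteq E_X(w)$ and $E_X(w) \subseteq E_X(v)$. Applying Theorem~\ref{Gtheorem} to the first containment yields $\mu(v) \le \mu(w)$, while applying it to the second (that is, with the roles of $v$ and $w$ interchanged, which is legitimate since $v$ and $w$ play symmetric roles in the hypotheses) yields $\mu(w) \le \mu(v)$. Combining the two inequalities gives $\mu(v) = \mu(w)$. Since the argument never used any special property of $\mu$ beyond its being a measure of maximal entropy, the conclusion holds for every such $\mu$ simultaneously.

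The real content here lies entirely in Theorem~\ref{Gtheorem}; the present result is obtained simply by \emph{symmetrizing} that one-sided inequality, so there is no genuine obstacle to overcome at this step. If I had to point to a subtlety, it would be purely a matter of bookkeeping: confirming that the extender-set notation $E(v)$ appearing in Theorem~\ref{Gtheorem} refers to the same $E_X(v)$ used in the present statement (i.e.\ that the ambient subshift $X$ is fixed throughout both invocations), which is immediate. In short, the hard work has already been done; the deduction is a one-line application of Theorem~\ref{Gtheorem} twice.
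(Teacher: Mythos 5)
Your proposal is correct and matches the paper's own route exactly: the paper presents Theorem~\ref{tomthm} as a direct consequence of Theorem~\ref{Gtheorem}, obtained by applying that one-sided inequality to each of the two containments $E_X(v)\subseteq E_X(w)$ and $E_X(w)\subseteq E_X(v)$. Nothing further is needed.
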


\begin{remark}
In fact the theorem from \cite{M} is more general; it treats equilibrium
states for a class of potentials $\phi$ with a property called $d$-summable
variation, and the statement here for measures of maximal entropy corresponds
to the $\phi=0$ case only.
\end{remark}

Due to our weaker hypothesis, $E_X(v) \subseteq E_X(w)$, our proof techniques are different from those used in \cite{M}. In particular, the case of different length $v,w$ treated in Theorem~\ref{hardcase} requires some subtle arguments about the ways in which $v,w$ can overlap themselves and each other.

Much as Corollary~\ref{maincor} was applicable to the class of synchronized subshifts, Theorem~\ref{Gtheorem} has new natural applications to the class of hereditary subshifts (introduced in \cite{KL1}), where there exist many pairs of words satisfying $E_{X}(v)\subsetneq E_{X}(w)$; see Section~\ref{hered} for details.

Section~\ref{defs} contains definitions and results needed throughout our
proofs, Section~\ref{zsec} contains our results for $\mathbb{Z}$-subshifts
(including various applications in Section~\ref{apps}), and Section~\ref%
{gsec} contains our results for $\mathbb{G}$-subshifts.

\section*{acknowledgments} 
We would like to thank the anonymous referee for their useful comments and suggestions.

\section{General definitions and preliminaries}

\label{defs}

We will use $\mathbb{G}$ to refer to a countable discrete group. We write 
$F \Subset \mathbb{G}$ to mean that $F$ is a finite subset of $\mathbb{G}$,
and unless otherwise stated, $F$ always refers to such an object.

A sequence $\{F_{n}\}_{n\in\mathbb{N}}$ with $F_{n} \Subset \mathbb{G}$ is
said to be \textbf{Følner} if for every $K\Subset\mathbb{G}$, we have that $%
|(K \cdot F_{n}) \Delta F_{n}|/|F_{n}|\rightarrow 0$. \ We say that $\mathbb{G}$
is \textbf{amenable} if it admits a Følner sequence. In particular, $\mathbb{Z%
}$ is an amenable group, since any sequence 
$\{F_{n}\} = [a_n, b_n] \cap \mathbb{Z}$ with $b_n - a_n \rightarrow \infty$
is F\o lner.

Let $\mathcal{A}$ be any finite set (usually known as the alphabet). We call 
$\mathcal{A}^{\mathbb{G}}$ the \textbf{full $\mathcal{A}$-shift on $\mathbb{G%
}$}, and endow it with the product topology (using the discrete topology on $%
\mathcal{A}$). For $x\in\mathcal{A}^{\mathbb{G}},$ we use $x_{i}$ to
represent the $i$th coordinate of $x$, and $x_{F}$ to represent the
restriction of $x$ to any $F\Subset\mathbb{G}$. 

For any $g \in\mathbb{G}$, we use $\sigma_{g}$ to denote the left
translation by $g$ on $\mathcal{A}^{\mathbb{G}}$, also called the \textbf{%
shift by $g$}; note that each $\sigma_{g}$ is an automorphism. We say $%
X \subseteq \mathcal{A}^{\mathbb{G}}$ is a $\mathbb{G}$-\textbf{subshift} if it
is closed and $\sigma_{g}(X) = X$ for all $g\in\mathbb{G}$; when $\mathbb{G=Z%
}$ we simply call it a subshift.

For $F \Subset\mathbb{G}$, we call an element of $\mathcal{A}^{F}$ a \textbf{%
word with shape $F$}. 
For $w$ a word with shape $F$ and $x$ either a point of $\mathcal{A}^{%
\mathbb{G}}$ or a word with shape $F^{\prime}\supset F$, we say that $w$ is
a \textbf{subword of $x$} if $x_{g + F} = w$ for some $g \in\mathbb{G}$.

For any $F$, the \textbf{$F$-language of $X$} is the set $L_{F}(X)\subseteq 
\mathcal{A}^{F}=\{x_{F}\ :\ x\in X\}$ of words with shape $F$ that appear as
subwords of points of $X.$ When $\mathbb{G} = \mathbb{Z}$, we use $L_{n}(X)$
to refer to $L_{\{0, \ldots, n-1\}}(X)$ for $n \in\mathbb{N}$. We define 
\begin{align*}
L(X) & :=\bigcup_{F\Subset\mathbb{G}}L_{F}(X)\text{ if }\mathbb{G\neq Z}%
\text{ and} \\
L(X) & :=\bigcup_{n \in\mathbb{N}}L_{n}(X)\text{ if }\mathbb{G=Z}\text{.}
\end{align*}

For any $\mathbb{G}$-subshift $X$ and $w\in L_{F}(X)$, we define the \textbf{%
cylinder set of $w$} as 
\begin{equation*}
\left[ w\right] :=\left\{ x\in X:x_{F}=w\right\} \text{.}
\end{equation*}



Whenever we refer to an interval in $\mathbb{Z}$, it means the intersection
of that interval with $\mathbb{Z}$. So, for instance, if $x\in\mathcal{A}^{%
	\mathbb{Z}}$ and $i < j$, $x_{\left[ i,j\right] }$ represents the subword of 
$x$ that starts in position $i$ and ends in position $j$. Unless otherwise
stated, a word $w\in\mathcal{A}^{n}$ is taken to have shape $[0, n)$. Every
word $w \in L(\mathcal{A}^{\mathbb{Z}})$ is in some $\mathcal{A}^{n}$ by
definition; we refer to this $n$ as the \textbf{length} of $w$ and denote it by $|w|$.

For any amenable $\mathbb{G}$ with Følner sequence $\left\{ F_{n}\right\}
_{n\in\mathbb{N}}$ and any $\mathbb{G}$-subshift $X$, we define the \textbf{%
topological entropy of $X$} as 
\begin{equation*}
h_{top}(X)=\lim_{n\rightarrow\infty}\frac{1}{\left\vert F_{n}
\right\vert }\log\left\vert L_{F_{n}}(X)\right\vert 
\end{equation*}
(this definition is in fact independent of the Følner sequence used.)

For any $w\in L(X)$, we define the \textbf{extender set of $w$} as 
\begin{equation*}
E_{X}(w):=\{x|_{F^{c}}\ :\ x\in\lbrack w]\}. 
\end{equation*}

\begin{example}
For any $\mathbb{G}$, if $X$ is the full shift on two symbols,
$\left\{ 0,1\right\}^\mathbb{G}$, then for any $F$, 
all words in $\{0,1\}^F$ have the same extender set, namely $\{0,1\}^{F^c}$.
\end{example}

\begin{example}
Take $\mathbb{G} = \mathbb{Z}^2$ and $X$ the hard-square shift on $\{0,1\}$
in which adjacent $1$s are forbidden horizontally and vertically. 
Then if we take $F = \{(0,0)\}$, we see that $E(0)$ is the set
of all configurations on $\mathbb{Z}^2 \setminus F$ which are legal, i.e.
which contain no adjacent $1$s. Similarly, $E(1)$ is the set of
all legal configurations on $\mathbb{Z}^2 \setminus F$ which also 
contain $0$s at $(0, \pm 1)$ and $(\pm 1, 0)$. In particular, we note that
here $E(1) \subsetneq E(0)$. 
\end{example}

In the specific case $\mathbb{G}=\mathbb{Z}$ and $w\in L_{n}(X)$, we may
identify $E_{X}(w)$ with the set of sequences which are concatenations of
the left side and the right side, i.e. $\{(x_{(-\infty,0)}x_{[n,\infty)})\
:\ x\in\lbrack w]\}$, and in this way can relate extender sets even for $v,w$
with different lengths. All extender sets in $\mathbb{Z}$ will be
interpreted in this way.


\begin{example}
If $X$ is the golden mean $\mathbb{Z-}$subshift on $\{0,1\}$ where adjacent $1$s are prohibited, then
$E(000)$ is the set of all legal configurations on $\mathbb{Z} \setminus \{0, 1, 2\}$, which is identified
with the set of all $\{0,1\}$ sequences $x$ which have no adjacent $1$s, with the exception that $x_{0} = x_{1} = 1$ is allowed.
This is because $000$ may be preceded by a one-sided sequence ending with $1$ and followed by a one-sided sequence beginning with $1$,
and after the identification with $\{0,1\}^{\mathbb{Z}}$, those $1$s could become adjacent.

Similarly, $E(01)$ is identified with the set of all $x$ on $\mathbb{Z}$ which have no adjacent $1$s and satisfy $x_{0} = 0$, and
$E(1)$ is identified with the set of all $x$ on $\mathbb{Z}$ which have no adjacent $1$s and satisfy $x_{0} = x_{1} = 0$. 

Therefore, even though they have different lengths, we can say here that $E(1) \subsetneq E(01) \subsetneq E(000)=E(0)$.
\end{example}




The next few definitions concern measures. Every measure in this work is
assumed to be a Borel probability measure $\mu$ on a $\mathbb{G-}$subshift $X
$ which is invariant under all shifts $\sigma_{g}$. By a generalization of
the Bogolyubov-Krylov theorem, every $\mathbb{G-}$subshift $X$ has at least
one such measure. For any such $\mu$ and any $w\in L(X)$, we will use $\mu(w)
$ to denote $\mu(\left[ w\right] ).$

For any Følner sequence $\{F_{n}\}$, we define the \textbf{entropy} of any
such $\mu$ as 
\begin{equation*}
h_{\mu}(X):=\lim_{n\rightarrow\infty}\frac{1}{\left\vert F_{n}\right\vert }%
\sum\nolimits_{w\in\mathcal{A}^{F_{n}}}-\mu(w)\log\mu(w). 
\end{equation*}

Again, this limit does not depend on the choice of Følner sequence (see \cite%
{KL2} for proofs of this property and of other basic properties of entropy
of amenable group actions).

It is always the case that $h_{\mu}(X)\leq h(X)$, and so a measure $\mu$ is
called a \textbf{measure of maximal entropy} (or \textbf{MME}) if $%
h_{\mu}(X)=h_{top}(X).$ For amenable $\mathbb{G}$, every $\mathbb{G}-$%
subshift has at least one measure of maximal entropy \cite{Mi}.

We briefly summarize some classical results from ergodic theory. A measure $%
\mu$ is \textbf{ergodic} if every set which is invariant under all $\sigma_{g}$
has measure $0$ or $1$. In fact, every measure $\mu$ can be written as a
generalized convex combination (really an integral) of ergodic measures;
this is known as the \textbf{ergodic decomposition} (e.g. see Section 8.7 of \cite{Gl}). The entropy map $%
\mu\mapsto h_{\mu}$ is linear and so the ergodic decomposition extends to
measures of maximal entropy as well; every MME can be written as a
generalized convex combination of ergodic MMEs. 

\begin{theorem}[Pointwise ergodic theorem \protect\cite{Li}]
\label{ergthm} For any ergodic measure $\mu$ on a $\mathbb{G}-$subshift $X$,
there exists a Følner sequence $\left\{ F_{n}\right\} $ such that for every $%
f\in L^{1}(\mu)$, 
\begin{equation*}
\mu\left( \left\{ x:\lim_{n\rightarrow\infty}\frac{1}{\left\vert
F_{n}\right\vert }\sum_{g\in F_{n}}f(\sigma_{g}x)=\int f\ d\mu\right\}
\right) =1. 
\end{equation*}
\end{theorem}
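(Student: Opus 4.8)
The plan is to recognize this as the specialization to subshifts of the pointwise ergodic theorem for measure-preserving actions of amenable groups, which is a deep result quoted directly from \cite{Li}; accordingly the realistic route is to follow Lindenstrauss's strategy rather than to give a short self-contained argument. The crucial structural feature of the statement is that a.e. convergence is asserted only for \emph{some} F\o lner sequence, not for an arbitrary one. This freedom is what makes the theorem accessible: I may replace any given F\o lner sequence by a subsequence that is \emph{tempered}, i.e. satisfies Shulman's condition $\left| \left( \bigcup_{k<n} F_k^{-1} \right) F_n \right| \le C \left| F_n \right|$ for a constant $C$ independent of $n$. Every amenable $\mathbb{G}$ admits such a sequence, obtained by thinning out any F\o lner sequence rapidly enough that each set dwarfs the union of the translates of the earlier ones; temperedness is exactly the hypothesis that allows a Vitali-type covering argument in a group carrying no useful metric.

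With a tempered sequence $\{F_n\}$ fixed, the heart of the matter is a weak-type $(1,1)$ maximal inequality for the averaging operators $A_n f(x) = \left| F_n \right|^{-1} \sum_{g \in F_n} f(\sigma_g x)$, namely a bound $\mu\left( \left\{ x : \sup_n A_n \left| f \right|(x) > \lambda \right\} \right) \le C \lambda^{-1} \left\| f \right\|_1$. I would establish this via the Calder\'on transference principle, which reduces the measure-theoretic inequality to a purely combinatorial covering lemma on $\mathbb{G}$: from any finite family of translates of the sets $F_n$ one can extract a subfamily that still covers a definite proportion of the union while having overlap bounded in terms of $C$ alone. This covering lemma is the main obstacle. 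In one dimension it is the elementary rising-sun argument, but for a general amenable group the translates interact in a genuinely non-commutative way and a naive greedy selection fails; the resolution is Lindenstrauss's device of choosing the subcover \emph{randomly} and bounding the expected overlap using temperedness.

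Granting the maximal inequality, I would conclude by the Banach principle together with a density argument. The maximal inequality shows that the set of $f \in L^1(\mu)$ along which $A_n f$ converges $\mu$-a.e. is closed in $L^1(\mu)$, so it suffices to prove convergence on a dense subclass. I would take the linear span of the $\sigma$-invariant functions, on which $A_n f = f$, together with the coboundaries $h - h \circ \sigma_g$, on which a telescoping estimate and the F\o lner property force $A_n f \to 0$; by the von Neumann mean ergodic theorem the orthogonal complement of the coboundaries is precisely the invariants, so this span is dense in $L^2(\mu)$ and hence, since $\mu$ is a probability measure, dense in $L^1(\mu)$. Convergence then holds $\mu$-a.e. for every $f \in L^1(\mu)$, and since $\mu$ is ergodic the invariant part of each $f$ equals the constant $\int f \, d\mu$, which identifies the limit and finishes the proof.
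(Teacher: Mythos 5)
The paper gives no proof of this theorem; it is quoted directly from Lindenstrauss \cite{Li}, and your outline is an accurate reconstruction of the argument in that source: pass to a tempered (Shulman) subsequence, prove the weak-type $(1,1)$ maximal inequality via transference and the randomized covering lemma, then conclude by the Banach principle applied to the dense subspace spanned by invariant functions and bounded coboundaries, identifying the limit as $\int f\,d\mu$ by ergodicity. So your proposal takes essentially the same route as the reference the paper relies on, though it is a roadmap rather than a self-contained proof, the covering lemma and maximal inequality being the genuinely hard steps.
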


\begin{theorem}[Shannon-Macmillan-Breiman theorem for amenable groups \protect\cite{We}]
\label{SMBthm} For any ergodic measure $\mu$ on a $\mathbb{G}-$subshift $X$,
there exists a Følner sequence $\left\{ F_{n}\right\} $ such that 
\begin{equation*}
\mu\left( \left\{ x:\lim_{n\rightarrow\infty}-\frac{1}{\left\vert
F_{n}\right\vert } \log \mu(x_{F_{n}})=h_{\mu}(X)\right\} \right) =1. 
\end{equation*}
\end{theorem}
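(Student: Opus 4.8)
The plan is to restate the conclusion in terms of information functions and then run the classical Breiman argument, replacing the one-sided time order available only for $\mathbb{G}=\mathbb{Z}$ by an ordered quasi-tiling of $\mathbb{G}$ in the spirit of Ornstein and Weiss. Let $\alpha=\{[a]:a\in\mathcal{A}\}$ be the generating partition recording the symbol at the identity, and for $A\Subset\mathbb{G}$ let $\alpha^{A}$ be the partition whose atoms are the cylinders $[x_{A}]$; writing $I_{\beta}(x):=-\log\mu(\beta(x))$ for the information function of a partition $\beta$ (with $\beta(x)$ the atom through $x$), we have $-\log\mu(x_{F_{n}})=I_{\alpha^{F_{n}}}(x)$, so the goal is that $\tfrac{1}{|F_{n}|}I_{\alpha^{F_{n}}}\to h_{\mu}(X)$ a.e. I would take $\{F_{n}\}$ to be the tempered F\o lner sequence furnished by Theorem~\ref{ergthm}, which is what makes the statement existential in the F\o lner sequence, and I would record the mean version $\tfrac{1}{|F_{n}|}H_{\mu}(\alpha^{F_{n}})\to h_{\mu}(X)$ (the definition of entropy via the Ornstein--Weiss lemma on almost-subadditive set functions along F\o lner sequences) for use as a cross-check and in the lower bound.

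To isolate the difficulty, recall the mechanism for $\mathbb{G}=\mathbb{Z}$. Enumerating $[0,n)$ left to right and applying the chain rule gives $I_{\alpha^{[0,n)}}=\sum_{k=0}^{n-1}f_{k}\circ\sigma^{k}$, where $f_{k}(x)=-\log\mu\bigl(x_{0}\mid x_{[-k,0)}\bigr)$ conditions the present on the $k$-step past. By martingale convergence $f_{k}\to f_{\infty}=-\log\mu(x_{0}\mid x_{(-\infty,0)})$ a.e.\ and in $L^{1}$, with $\int f_{\infty}\,d\mu=h_{\mu}(X)$; one then applies the pointwise ergodic theorem to $f_{\infty}$ and controls the averaged difference $f_{k}-f_{\infty}$ by dominating $\sup_{k}f_{k}$ through Chung's $L^{1}$ maximal inequality. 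It is worth stressing that $f_{k}$ is \emph{not} pointwise monotone in $k$---conditioning on more coordinates lowers information only in mean---so the maximal inequality, not a pointwise comparison, is what forces the averaged limit to equal $\int f_{\infty}\,d\mu$.

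For general $\mathbb{G}$ the same two analytic inputs are available---the pointwise ergodic theorem (Theorem~\ref{ergthm}) and martingale convergence---and the one missing ingredient is a substitute for the one-sided past that lets the per-coordinate conditional informations average to exactly $h_{\mu}(X)$. My plan is to supply it via Ornstein--Weiss quasi-tilings. Fix an ordering of $\mathbb{G}$ compatible with a hierarchy of tiles and, given $\varepsilon>0$, choose a finite one-sided ``past shape'' $P$ so that $f_{P}(x):=-\log\mu(x_{e}\mid x_{P})$ satisfies $\int f_{P}\,d\mu\le h_{\mu}(X)+\varepsilon$, which is possible because these one-sided conditional entropies decrease to $h_{\mu}(X)$. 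Then choose finite tiles $T_{1},\dots,T_{m}$ that (i) contain a translate of $P$ below each interior point in the ordering and (ii) $(1-\varepsilon)$-quasi-tile every sufficiently invariant $F_{n}$ by disjoint translates $cT_{i}\subseteq F_{n}$. Enumerating $F_{n}$ so that each coordinate is preceded by the translate of $P$ below it, the chain rule writes $I_{\alpha^{F_{n}}}$ as a sum of per-coordinate conditional informations, each conditioned on a set containing a translate of $P$, while the uncovered remainder and the tile boundary layers contribute $o(|F_{n}|)$ by the F\o lner property. After dividing by $|F_{n}|$ this is, up to the same $L^{1}$ maximal-function error as in the $\mathbb{Z}$ case, an ergodic average of translates of $f_{P}$, and Theorem~\ref{ergthm} drives it to $\int f_{P}\,d\mu\le h_{\mu}(X)+\varepsilon$ a.e. Letting $\varepsilon\to0$ yields $\limsup_{n}\tfrac{1}{|F_{n}|}I_{\alpha^{F_{n}}}\le h_{\mu}(X)$ a.e., and the reverse inequality follows from the mean statement together with the uniform integrability supplied by the maximal inequality.

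The hard part is exactly this construction. First, one must manufacture a genuine one-sided past on a group with no order so that the conditional informations integrate to $h_{\mu}(X)$ and not to the strictly smaller value one would obtain by conditioning symmetrically on all other coordinates; this is precisely what forces an ordered quasi-tiling rather than a naive conditioning on a full neighborhood. Second, one must keep the total contribution of the remainder and boundary coordinates $o(|F_{n}|)$ uniformly, which requires tiles that are simultaneously good for quasi-tiling and large enough that $\tfrac{1}{|T_{i}|}H_{\mu}(\alpha^{T_{i}})$ is within $\varepsilon$ of $h_{\mu}(X)$. Third, because information is not pointwise monotone under refinement of the conditioning, the passage from the varying finite conditioning sets to the fixed limit must go through a Chung-type maximal inequality rather than a term-by-term estimate. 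Assembling the quasi-tiling, the boundary control, and the maximal inequality while keeping all error terms $o(|F_{n}|)$ after averaging is the technical crux, and it is the reason this theorem is genuinely deeper than its mean (Shannon--McMillan) counterpart.
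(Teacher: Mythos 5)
You should first note that the paper does not prove this statement at all: it is quoted as a known result with a citation to \cite{We} (and, for the tempered F\o lner sequences, to \cite{Li}), and the only thing the authors actually use is its Corollary~\ref{SMBcor}. So there is no in-paper argument to compare yours against; the question is whether your outline would stand on its own as a proof.

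It would not, as written. Your plan correctly names the ingredients of the Ornstein--Weiss/Weiss argument (chain rule for information, martingale convergence, the pointwise ergodic theorem along a tempered F\o lner sequence, a Chung-type $L^{1}$ maximal inequality, and ordered quasi-tilings), but the step you label ``the hard part'' is asserted rather than carried out, and it is precisely the content of the theorem. Concretely: (1) for a general countable amenable group there is no canonical ``past,'' and the existence of finite shapes $P$ with $\int f_{P}\,d\mu\downarrow h_{\mu}(X)$, together with tiles $T_{1},\dots,T_{m}$ each of which can be enumerated so that every interior coordinate is preceded by a translate of $P$ lying inside the tile, is a nontrivial construction (this is where Ornstein--Weiss tiling machinery and a careful choice of ordering enter; it cannot be dispatched by ``fix an ordering of $\mathbb{G}$ compatible with a hierarchy of tiles''). (2) The maximal inequality you invoke to control $\sup_{P}f_{P}$ must itself be proved for the directed family of conditioning shapes arising from the tiling, not just quoted from the $\mathbb{Z}$ case. (3) Your lower bound is a genuine gap: from $\limsup_{n}\tfrac{1}{|F_{n}|}I_{\alpha^{F_{n}}}\le h_{\mu}(X)$ a.e., the mean convergence $\tfrac{1}{|F_{n}|}H_{\mu}(\alpha^{F_{n}})\to h_{\mu}(X)$, and uniform integrability, reverse Fatou only yields that the $\limsup$ equals $h_{\mu}(X)$ a.e.; it does not force the $\liminf$ up to $h_{\mu}(X)$, so almost-everywhere existence of the limit does not follow. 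In the standard proofs the lower bound requires a separate counting/covering argument, and omitting it leaves the theorem unproved. In short, your proposal is a faithful map of the known proof's architecture, but every load-bearing step is deferred.
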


The classical pointwise ergodic and Shannon-Macmillan-Breiman theorems were originally stated for $\mathbb{G=Z}$ and the Følner sequence $[0,n]$. We only need Theorem~\ref{SMBthm} for the following
corollary (when $\mathbb{G=Z}$ this is essentially what is known as Katok's entropy formula; see \cite{Ka}).

\begin{corollary}
\label{SMBcor} Let $\mu$ be an ergodic measure of maximal entropy on a $%
\mathbb{G}$-subshift $X$. There exists a Følner sequence $\{F_{n}\}$ such that for every $S_{n}
\subseteq L_{F_{n}}(X)$ such that $\mu(S_{n}) \rightarrow1$, then 
\begin{equation*}
\lim_{n \rightarrow\infty} \frac{1}{|F_{n}|} \log|S_{n}| = h_{top}(X). 
\end{equation*}
\end{corollary}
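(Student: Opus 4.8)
The plan is to use the F\o lner sequence $\{F_{n}\}$ furnished by the Shannon--Macmillan--Breiman theorem (Theorem~\ref{SMBthm}) and to establish the two inequalities $\limsup_{n}\frac{1}{|F_{n}|}\log|S_{n}|\le h_{top}(X)$ and $\liminf_{n}\frac{1}{|F_{n}|}\log|S_{n}|\ge h_{top}(X)$ separately. The upper bound needs nothing beyond $S_{n}\subseteq L_{F_{n}}(X)$: we then have $|S_{n}|\le|L_{F_{n}}(X)|$, and since the definition of topological entropy is independent of the F\o lner sequence, $\frac{1}{|F_{n}|}\log|L_{F_{n}}(X)|\to h_{top}(X)$ along the chosen sequence. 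Note this direction does not even use the hypothesis $\mu(S_{n})\to 1$.

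For the lower bound, write $h:=h_{top}(X)=h_{\mu}(X)$, the second equality holding because $\mu$ is an MME. Fix $\varepsilon>0$ and set
\[
A_{n}:=\{x\in X:\mu(x_{F_{n}})\le e^{-|F_{n}|(h-\varepsilon)}\}.
\]
Theorem~\ref{SMBthm} gives $-\frac{1}{|F_{n}|}\log\mu(x_{F_{n}})\to h$ for $\mu$-a.e.\ $x$, so for a.e.\ $x$ the indicator $\mathbf{1}_{A_{n}}(x)$ equals $1$ for all large $n$; hence $\mathbf{1}_{A_{n}}\to 1$ a.e.\ and, by the bounded convergence theorem, $\mu(A_{n})\to 1$. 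Next I would identify $S_{n}$ with the union of cylinders $B_{n}:=\bigcup_{w\in S_{n}}[w]$, so that the hypothesis reads $\mu(B_{n})\to 1$.

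Since $\mu(A_{n})\to 1$ and $\mu(B_{n})\to 1$, for all large $n$ we have $\mu(A_{n}\cap B_{n})\ge\mu(A_{n})+\mu(B_{n})-1\ge\tfrac12$. Because the cylinders of distinct words of shape $F_{n}$ are disjoint, $A_{n}\cap B_{n}$ is exactly the union of $[w]$ over $w\in S_{n}':=\{w\in S_{n}:\mu(w)\le e^{-|F_{n}|(h-\varepsilon)}\}$, and therefore
\[
\tfrac12\le\mu(A_{n}\cap B_{n})=\sum_{w\in S_{n}'}\mu(w)\le|S_{n}'|\,e^{-|F_{n}|(h-\varepsilon)}\le|S_{n}|\,e^{-|F_{n}|(h-\varepsilon)}.
\]
Rearranging gives $\frac{1}{|F_{n}|}\log|S_{n}|\ge(h-\varepsilon)-\frac{\log 2}{|F_{n}|}$, so letting $n\to\infty$ yields $\liminf_{n}\frac{1}{|F_{n}|}\log|S_{n}|\ge h-\varepsilon$; as $\varepsilon>0$ was arbitrary the lower bound follows, and combining with the upper bound gives the stated limit.

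The only genuinely delicate point is the passage from the almost-sure convergence in Theorem~\ref{SMBthm} to the measure statement $\mu(A_{n})\to 1$, where the set $A_{n}$ itself varies with $n$; this is precisely what the bounded convergence argument on the indicators $\mathbf{1}_{A_{n}}$ handles. Everything else is a one-sided counting estimate: each typical word carries $\mu$-measure at most $e^{-|F_{n}|(h-\varepsilon)}$, so any collection capturing a fixed positive fraction of the total mass must be exponentially large, with exponent at least $h-\varepsilon$.
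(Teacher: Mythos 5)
Your proof is correct and follows essentially the same route as the paper's: both take the F\o lner sequence from Theorem~\ref{SMBthm}, get the upper bound trivially from $|S_n|\le|L_{F_n}(X)|$, and for the lower bound intersect $S_n$ with the set of ``typical'' words of measure at most $e^{-|F_n|(h_{top}(X)-\varepsilon)}$ (your $A_n$ is the paper's $T_n$, and your bounded-convergence argument is the same as the paper's $\mu\bigl(\bigcup_N\bigcap_{n\ge N}T_n\bigr)=1$ step) before counting. No substantive differences.
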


\begin{proof}
Take $X$, $\mu$ as in the theorem, $\{F_{n}\}$ a Følner sequence that satisfies the Shannon-Macmillan-Breiman theorem, and $S_{n}$ as in the theorem. Fix any $%
\epsilon> 0$. By the definition of topological entropy, 
\begin{equation*}
\limsup_{n \rightarrow\infty} \frac{1}{|F_{n}|} \log|S_{n}| \leq\lim_{n
\rightarrow\infty} \frac{1}{|F_{n}|} \log|L_{F_{n}}(X)| = h_{top}(X). 
\end{equation*}

For every $n$, define 
\begin{equation*}
T_{n} = \{w \in\mathcal{A}^{F_{n}} \ : \ \mu(w) < e^{-|F_{n}|(h_{top}(X) -
\epsilon)} \}. 
\end{equation*}

By the Shannon-Macmillan-Breiman theorem, $\mu\left( \bigcup_{N} \bigcap_{n
= N}^{\infty} T_{n} \right) = 1$, and so $\mu(T_{n}) \rightarrow1$.
Therefore, $\mu(S_{n} \cap T_{n}) \rightarrow1$, and by definition of $T_{n}$%
, 
\begin{equation*}
|S_{n} \cap T_{n}| \geq\mu(S_{n} \cap T_{n}) e^{|F_{n}| (h_{top}(X) -
\epsilon)}. 
\end{equation*}
Therefore, for sufficiently large $n$, $|S_{n}| \geq|S_{n} \cap T_{n}|
\geq0.5 e^{|F_{n}| (h_{top}(X) - \epsilon)}$. Since $\epsilon> 0$ was
arbitrary, the proof is complete.
\end{proof}

Finally, several of our main arguments rely on the following elementary
combinatorial lemma, whose proof we leave to the reader.

\begin{lemma}
\label{counting} If $S$ is a finite set, $\{A_{s}\}$ is a collection of
finite sets, $m = \min\{|A_{s}|\}$, and $M = \max_{a \in\bigcup A_{s}} |\{s
\ | \ a \in A_{s}\}|$, then 
\begin{equation*}
\left| \bigcup_{s \in S} A_{s} \right| \geq|S| \frac{m}{M}. 
\end{equation*}
\end{lemma}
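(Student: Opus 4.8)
The plan is to prove this by a standard double-counting argument applied to the set of incidence pairs $\{(s,a) \ : \ s \in S, \ a \in A_s\}$, i.e. I would count the total number of such pairs in two different ways. First I would sum over the index $s$: since $|A_s| \geq m$ for every $s$ by the definition of $m$, summing gives the lower bound $\sum_{s \in S} |A_s| \geq |S| m$.

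Next I would count the same pairs by grouping them according to the element $a$ rather than the index $s$. For a fixed $a \in \bigcup_{s} A_s$, the number of incidence pairs involving $a$ is exactly $|\{s \ : \ a \in A_s\}|$, and this quantity is at most $M$ by the definition of $M$. Summing over all $a$ in the union then yields the upper bound $\sum_{a \in \bigcup A_s} |\{s \ : \ a \in A_s\}| \leq \left| \bigcup_{s} A_s \right| \cdot M$. The crucial (and essentially trivial) observation is that these two sums compute the same total, namely the number of incidence pairs, so $\sum_{s \in S} |A_s| = \sum_{a \in \bigcup A_s} |\{s \ : \ a \in A_s\}|$. Chaining the lower and upper bounds through this equality produces $|S| m \leq \left| \bigcup_{s} A_s \right| \cdot M$, and dividing by $M$ gives the desired inequality.

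I do not anticipate any genuine obstacle, as the argument is purely combinatorial; the only points requiring a moment of care are the degenerate cases. If $m = 0$ the claimed inequality holds trivially, and if the union $\bigcup_{s} A_s$ is nonempty then $M \geq 1$, so the final division is legitimate; in the remaining case where the union is empty, every $A_s$ is empty, forcing $m = 0$ and making the right-hand side zero, so the bound again holds. This is why the authors can safely leave the verification to the reader.
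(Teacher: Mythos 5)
Your double-counting argument is correct and complete, including the attention to the degenerate cases; the paper explicitly leaves this lemma's proof to the reader, and the incidence-pair count you give is precisely the standard argument the authors intend.
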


\section{Results on $\mathbb{Z-}$Subshifts}

\label{zsec}

\bigskip In this section we present the results for $\mathbb{G}=\mathbb{Z}$,
and must begin with some standard definitions about $\mathbb{Z}-$subshifts.


For words $v \in\mathcal{A}^{m}$ and $w \in\mathcal{A}^{n}$ with $m \leq n$,
we say that $v$ is a \textbf{prefix} of $w$ if $w_{[0,m)} = v$, and $v$ is a 
\textbf{suffix} of $w$ if $w_{[n-m, n)} = v$. 
\subsection{Main result}
\bigskip We now need some technical definitions about replacing one or more
occurrences of a word $v$ by a word $w$ inside a larger word $u$, which are
key to most of our arguments in this section. First, for any $v\in L(%
\mathcal{A}^{\mathbb{Z}}),$ we define the function $O_{v} :L(\mathcal{A}^{%
\mathbb{Z}}) \rightarrow \mathcal{P}(\mathbb{N})$ which sends any word $u$ to the set of
locations where $v$ occurs as a subword in $u$, i.e. 
\begin{equation*}
O_{v}(u):=\left\{ i\in\mathbb{N}:\sigma_{i}(u)\in\left[ v\right] \right\} . 
\end{equation*}
For any $w \in L(\mathcal{A}^{\mathbb{Z}})$, we may then define the function 
$R_{u}^{v\rightarrow w}: O_{v}(u) \rightarrow L(\mathcal{A}^{\mathbb{Z}})$
which replaces the occurrence of $v$ within $u$ at some position in $O_{v}(u)
$ by the word $w$. Formally, $R_{u}^{v\rightarrow w}(i)$ is the word $%
u^{\prime }$ of length $|u| - |v| + |w|$ defined by $u^{\prime}_{[0,i)} =
u_{[0,i)}$, $u^{\prime}_{[i,i+|w|)} = w$, and $u^{%
\prime}_{[i+|w|,|u|-|v|+|w|)} = u_{[i+|v|,|u|)}$.

\bigskip Our arguments in fact require replacing many occurrences of $v$ by $%
w$ within a word $u$, at which point some technical obstructions appear. For
instance, if several occurrences of $v$ overlap in $u$, then replacing one
by $w$ may destroy the other. The following defines conditions on $v$ and $w$
which obviate these and other problems which would otherwise appear in our
counting arguments.

\begin{definition}
\label{respect} For $v,w \in L(\mathcal{A}^{\mathbb{Z}})$, we say that $v$ 
\textbf{respects the transition to} $w$ if, for any $u\in L(\mathcal{A}^{%
\mathbb{Z}})$ and any $i\in O_{v}(u)$, 
\begin{align*}
\mathrm{(i) } \ & j+|w|-|v| \in O_{v}(R_{u}^{v\rightarrow w}(i))\text{ for
any }j\in O_{v}(u)\text{ with }i < j, \\
\mathrm{(ii) } \ & j \in O_{v}(R_{u}^{v\rightarrow w}(i))\text{ for any }%
j\in O_{v}(u)\text{ with }i>j, \\
\mathrm{(iii) } \ & j \in O_{w}(R_{u}^{v\rightarrow w}(i))\text{ for any }%
j\in O_{w}(u)\text{ with }i>j, \\
\mathrm{(iv) } \ & j + |w| - |v| > i \text{ for any }j\in O_{v}(u)\text{
with }i < j.
\end{align*}
\end{definition}

Informally, $v$ respects the transition to $w$ if, whenever a single
occurrence of $v$ is replaced by $w$ in a word $u$, all other occurrences of 
$v$ in $u$ are unchanged, all occurrences of $w$ in $u$ to the left of the
replacement are unchanged, and all occurrences of $v$ in $u$ which were to
the right of the replaced occurrence remain on that side of the replaced
occurrence.

When $v$ respects the transition to $w$, we are able to meaningfully define
replacement of a set of occurrences of $v$ by $w$, even when those
occurrences of $v$ overlap, as long as we move from left to right. For any $%
u,v,w \in L(\mathcal{A}^{\mathbb{Z}})$, we define a function $R_{u}^{v\rightarrow
w}: \mathcal{P}(O_{v}(u)) \rightarrow L(\mathcal{A}^{\mathbb{Z}})$ as follows. For any $S:=\left\{ s_{1}
,...,s_{n}\right\} \subseteq O_{v}(u)$ (where we always assume $s_1 < s_2 < \ldots < s_n$),
we define sequential replacements $%
\left\{ u^{m}\right\} _{m=1}^{n+1}$ by\ 

1) $u=u^{1}.$

2) $u^{m+1}=R_{u^{m}}^{v\rightarrow w}(s_{m}+(m-1)(|w|-|v|)).$

Finally, we define $R_{u}^{v\rightarrow w}(S)$ to be $u^{n+1}$.


\bigskip We first need some simple facts about $R_{u}^{v\rightarrow w}$ which are
consequences of Definition~\ref{respect}.

\begin{lemma}
\label{wsurvive} For any $u,v,w \in L(\mathcal{A}^{\mathbb{Z}})$ where $v$ respects the transition
to $w$ and any $S=\left\{ s_{1},...,s_{n}\right\} \subseteq O_{v}(u)$, all
replacements of $v$ by $w$ persist throughout, i.e. $\{s_{1},s_{2}
+(|w|-|v|),s_{3}+2(|w|-|v|),\ldots,s_{n}+(n-1)(|w|-|v|)\}\subseteq O_{w}
(R_{u}^{v\rightarrow w}(S))$.
\end{lemma}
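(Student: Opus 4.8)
The plan is to induct on the number $m$ of replacements that have been carried out, keeping track at each stage of both the positions of the copies of $w$ already inserted and the positions of the copies of $v$ not yet replaced. Write $d := |w| - |v|$ and $q_k := s_k + (k-1)d$, so that $q_k$ is exactly the position at which the $k$-th replacement is performed (that is, $u^{k+1} = R_{u^k}^{v\to w}(q_k)$) and also the claimed location of the resulting copy of $w$. The conclusion is then precisely that $\{q_1, \ldots, q_n\} \subseteq O_w(u^{n+1})$.

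The inductive hypothesis $P(m)$, for $0 \le m \le n$, asserts three things about the word $u^{m+1}$: (a) the insertions have stayed in place and in order, i.e. $q_1 < q_2 < \cdots < q_m$ and $\{q_1, \ldots, q_m\} \subseteq O_w(u^{m+1})$; (b) the as-yet-unreplaced copies of $v$ have shifted uniformly, i.e. $s_j + md \in O_v(u^{m+1})$ for every $m < j \le n$; and (c) when $1 \le m < n$, the next replacement lies strictly to the right of the last insertion, i.e. $q_m < s_{m+1} + md = q_{m+1}$. The base case $P(0)$ is immediate: clause (a) is vacuous, and clause (b) reduces to $s_j \in O_v(u)$, which holds since $S \subseteq O_v(u)$.

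For the inductive step I would pass from $u^{m+1}$ to $u^{m+2}$ by replacing the copy of $v$ at $i := q_{m+1}$ (a legitimate position by clause (b) of $P(m)$ with $j = m+1$) and then read off the clauses of Definition~\ref{respect} for this $u^{m+1}$ and this $i$. Clause (iii), applied to each $q_k$ with $k \le m$ (which lies left of $i$ by the ordering in $P(m)$), shows the previously inserted copies of $w$ survive unmoved, while the new copy sits at $i = q_{m+1}$ by the definition of $R_{u^{m+1}}^{v\to w}$; combined with the ordering this yields $P(m+1)$(a). Clause (i), applied to each remaining copy of $v$ at $s_j + md$ with $j > m+1$ (each lying right of $i$ since $s_j > s_{m+1}$), moves it to $s_j + (m+1)d$, giving $P(m+1)$(b). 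Finally clause (iv), applied to the copy of $v$ at $s_{m+2} + md$, forces $s_{m+2} + (m+1)d > q_{m+1}$, which is exactly $P(m+1)$(c). Taking $m = n$ in clause (a) is the statement of the lemma.

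The step I expect to be the crux is clause (c). Unlike the case $|w| = |v|$, here $d$ may be negative, so it is not automatic that the successive replacement positions $q_1, q_2, \ldots$ increase; were they not increasing, an early insertion of $w$ could be overwritten by a later replacement, and the bookkeeping would collapse. Condition (iv) of Definition~\ref{respect} exists precisely to rule this out, and the delicate point is to thread it through the induction so that at every stage the next replacement is guaranteed to occur to the right of everything already inserted. Once this ordering is secured, clauses (i) and (iii) carry out the rest of the accounting routinely, and clause (ii) is not even needed since all surviving copies of $v$ lie to the right of the current replacement.
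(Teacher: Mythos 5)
Your proof is correct and follows essentially the same route as the paper's: property (iv) secures the left-to-right ordering of the replacement positions $q_1 < q_2 < \cdots < q_n$, and property (iii) then shows each inserted copy of $w$ survives all later replacements, which occur strictly to its right. The only difference is one of bookkeeping: your clause (b), proved via property (i), makes explicit that each successive replacement position $q_{m+1}$ really lies in $O_v(u^{m+1})$, a point the paper's proof treats as implicit in the definition of $R_u^{v\rightarrow w}(S)$.
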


\begin{proof}
Choose any $v,w,u,S$ as in the lemma, and any $s_{i} \in S$. Using the
terminology above, clearly $s_{i} + (i-1)(|w| - |v|) \in O_{w}(u^{(i+1)})$.
By property (iv) of a respected transition, $s_{1} < s_{2} + |w| - |v| <
\ldots< s_{n} + (n-1)(|w| - |v|)$. Then, since $s_{i} + (i-1)(|w| - |v|) <
s_{j} + (j-1)(|w| - |v|)$ for $j > i$, by property (iii) of respected
transition, $s_{i} + (i-1)(|w| - |v|) \in O_{w}(u^{(j+1)})$ for all $j > i$,
and so $s_{i} + (i-1)(|w| - |v|) \in O_{w}(R_{u}^{v \rightarrow w}(S))$.
Since $i$ was arbitrary, this completes the proof.
\end{proof}

\begin{lemma}
\label{vsurvive} For any $u,v,w \in L(\mathcal{A}^{\mathbb{Z}})$ where $v$ respects the transition
to $w$ and any $S=\left\{ s_{1},...,s_{n}\right\} \subseteq O_{v}(u)$, any
occurrence of $v$ not explicitly replaced in the construction of $%
R_{u}^{v\rightarrow w}$ also persists, i.e. if $m\in O_{v}(u)\setminus S$
and $s_{i}<m<s_{i+1}$, then $m+i(|w|-|v|)\in O_{v}(R_{u}^{v\rightarrow w}(S))
$.
\end{lemma}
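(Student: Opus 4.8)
The plan is to track the single unreplaced occurrence of $v$ sitting at position $m$ through the sequential construction of $R_u^{v\to w}(S)$, stage by stage, in exactly the spirit of Lemma~\ref{wsurvive} (which tracks the newly created occurrences of $w$). Write $p_k := s_k + (k-1)(|w|-|v|)$ for the position at which $v$ is replaced at the $k$th stage, so that $u^{k+1} = R_{u^k}^{v\to w}(p_k)$, and let $m^{(k)}$ denote the position of the tracked occurrence of $v$ inside $u^k$, with $m^{(1)} = m$. First I would reproduce the monotonicity observation from the proof of Lemma~\ref{wsurvive}: applying property (iv) of Definition~\ref{respect} to each consecutive pair $s_k < s_{k+1}$ in $O_v(u)$ gives $s_k < s_{k+1} + (|w|-|v|)$, and subtracting the common accumulated shift shows $p_1 < p_2 < \cdots < p_n$. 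This is the one place where the sign of $|w|-|v|$ matters, and property (iv) is exactly the hypothesis that rescues the argument when $w$ is shorter than $v$.

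With the $p_k$ increasing, I would prove by induction on $k$ that the tracked occurrence survives and that $m^{(k)} = m + \min(k-1,\,i)(|w|-|v|)$. For the stages $k = 1, \dots, i$ the replaced position lies strictly to the left of the tracked occurrence, since $s_k \le s_i < m$ and both sides carry the same shift $(k-1)(|w|-|v|)$, giving $p_k = s_k + (k-1)(|w|-|v|) < m + (k-1)(|w|-|v|) = m^{(k)}$. Property (i) of Definition~\ref{respect}, applied to the single replacement $u^{k+1} = R_{u^k}^{v\to w}(p_k)$, then keeps the occurrence alive and shifts it to $m^{(k+1)} = m^{(k)} + (|w|-|v|)$. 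After the first $i$ stages this yields $m^{(i+1)} = m + i(|w|-|v|)$.

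For the remaining stages $k = i+1, \dots, n$ I claim the replaced position lies strictly to the right of the tracked occurrence, that is $p_k > m^{(i+1)} = m + i(|w|-|v|)$. The base case $k = i+1$ reduces to $s_{i+1} > m$, which holds by hypothesis ($s_i < m < s_{i+1}$); and for $k > i+1$ the monotonicity $p_k > p_{i+1}$ from the first paragraph supplies the inequality for free. Property (ii) of Definition~\ref{respect} then keeps the occurrence alive at an unchanged position, so $m^{(k+1)} = m^{(k)} = m + i(|w|-|v|)$ throughout these stages. Reaching $k = n$ gives $m + i(|w|-|v|) \in O_v(u^{n+1}) = O_v(R_u^{v\to w}(S))$, as desired.

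The only genuine obstacle is the possibility that $|w| - |v| < 0$, which blocks the naive inference that later replacements remain to the right of $m$ directly from $s_k > m$. The resolution is to compress all of the right-hand inequalities into the single base case $s_{i+1} > m$ and propagate it via the established monotonicity of the $p_k$, which is itself guaranteed by property (iv) of a respected transition.
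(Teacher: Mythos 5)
Your proof is correct and follows essentially the same route as the paper's: property (i) with an induction to carry the occurrence through the first $i$ replacements (accumulating the shift $i(|w|-|v|)$), property (iv) to establish that the tracked position stays strictly to the left of all subsequent replacement positions, and property (ii) to leave it fixed through the remaining stages. You spell out the intermediate inequalities $p_k < m^{(k)}$ and $p_k > m^{(i+1)}$ more explicitly than the paper does, but the argument is the same.
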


\begin{proof}
Choose any $v,w,u,S$ as in the lemma, and any $m \in O_{v}(u) \cap(s_{i},
s_{i+1})$ for some $i$. Using property (i) of a respected transition, a
simple induction implies that $m + j(|w| - |v|) \in O_{v}(u^{(j+1)})$ for
all $j \leq i$. By property (iv) of a respected transition, $m + i(|w| -
|v|) < s_{i+1} + i(|w| - |v|) < \ldots< s_{n} + (n-1)(|w| - |v|)$.
Therefore, using property (ii) of a respected transition allows a simple
induction which implies that $m + i(|w| - |v|) \in O_{v}(u^{(j+1)})$ for all 
$j > i$, and so $m + i(|w| - |v|) \in O_{v}(R_{u}^{v \rightarrow w}(S))$.
\end{proof}

We may now prove injectivity of $R_{u}^{v\rightarrow w}$ under some
additional hypotheses, which is key for our main proofs.

\begin{lemma}
\label{injective} Let $v,w\in L(\mathcal{A}^{\mathbb{Z}})$ such that $v$ respects the transition to 
$w$, $v$ is not a suffix of $w$, and $w$ is not a prefix of $v$. For any $%
u\in L(\mathcal{A}^{\mathbb{Z}})$ and $m$, $R_{u}^{v\rightarrow w}$ is injective on the set of $m$%
-element subsets of $O_{v}(u)$.
\end{lemma}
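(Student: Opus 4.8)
The plan is to argue by contradiction. Suppose $S \neq S'$ are distinct $m$-element subsets of $O_v(u)$ with $R_u^{v\rightarrow w}(S) = R_u^{v\rightarrow w}(S') =: u'$. Since any $m$-element replacement produces a word of length $|u| + m(|w|-|v|)$, the two words have the same length, so it is meaningful to compare them position by position. Writing $S = \{s_1 < \cdots < s_m\}$ and $S' = \{s_1' < \cdots < s_m'\}$, I would record the positions at which the replaced copies of $w$ actually sit in $u'$: by Lemma~\ref{wsurvive} these are $t_i := s_i + (i-1)(|w|-|v|)$ for the $S$-process and $t_i' := s_i' + (i-1)(|w|-|v|)$ for the $S'$-process. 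Because $s_i \mapsto t_i$ is strictly increasing (property (iv)), the statement $S \neq S'$ is equivalent to $T \neq T'$, where $T = \{t_1 < \cdots < t_m\}$ and $T' = \{t_1' < \cdots < t_m'\}$.

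The key idea is that a discrepancy between $T$ and $T'$ forces a copy of $w$ (from one process) to overlap a surviving copy of $v$ (from the other) at the same place in $u'$, and the geometry of that overlap is exactly one of the two patterns excluded by hypothesis. Suppose first that $|v| > |w|$. I would take $t$ to be the smallest element of $T \triangle T'$ and, swapping $S,S'$ if needed, assume $t = t_k \in T \setminus T'$. Since all elements of $T,T'$ below $t$ agree, $s_i = s_i'$ for $i < k$; minimality of $t$ then shows $s_k \notin S'$ and that exactly $k-1$ elements of $S'$ lie below $s_k$, i.e. $s_{k-1}' < s_k < s_k'$. Then Lemma~\ref{wsurvive} places a copy of $w$ at position $t_k$ in $u' = R_u^{v\rightarrow w}(S)$, while Lemma~\ref{vsurvive} places a copy of $v$ at position $s_k + (k-1)(|w|-|v|) = t_k$ in $u' = R_u^{v\rightarrow w}(S')$. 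Thus $u'_{[t_k,\,t_k+|w|)} = w$ and $u'_{[t_k,\,t_k+|v|)} = v$ begin at the same place; since $|w| < |v|$ this gives $w = v_{[0,|w|)}$, so $w$ is a prefix of $v$, contradicting the hypothesis.

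When $|v| \leq |w|$ I would instead take $t^*$ to be the \emph{largest} element of $T \triangle T'$, and assume $t^* = t_k \in T \setminus T'$. Now the elements above $t^*$ agree, giving $s_i = s_i'$ for $i > k$, and the analogous extremality argument yields $s_k \notin S'$ with $s_k' < s_k < s_{k+1}'$, so $S'$ has exactly $k$ elements below $s_k$. Lemma~\ref{wsurvive} again places $w$ at $t_k$ in $R_u^{v\rightarrow w}(S)$, while Lemma~\ref{vsurvive} now places $v$ at position $s_k + k(|w|-|v|) = t_k + (|w|-|v|)$ in $R_u^{v\rightarrow w}(S')$. These two occurrences share their right endpoint $t_k + |w|$, and since $|v| \leq |w|$ the copy of $v$ sits inside the copy of $w$, giving $v = w_{[|w|-|v|,\,|w|)}$, so $v$ is a suffix of $w$, again a contradiction.

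The main obstacle, and the reason both hypotheses ``$v$ is not a suffix of $w$'' and ``$w$ is not a prefix of $v$'' are needed, is precisely this dependence on the sign of $|w|-|v|$: aligning the leftmost discrepancy always yields start-aligned occurrences (a prefix relation), whereas aligning the rightmost discrepancy yields end-aligned occurrences (a suffix relation), so the choice of extremal discrepancy must be matched to the length regime. A secondary technical point is that the invocations of Lemma~\ref{vsurvive} occur at boundary indices (the cases $k=1$ and $k=m$, where $s_k$ lies to the left of all of $S'$ or to the right of all of $S'$), so I would first observe that Lemma~\ref{vsurvive} extends verbatim to occurrences of $v$ lying below $s_1$ or above $s_n$.
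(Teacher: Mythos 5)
Your proposal is correct and follows essentially the same route as the paper: both split on the sign of $|w|-|v|$, locate the leftmost (resp.\ rightmost) discrepancy between $S$ and $S'$, and use Lemmas~\ref{wsurvive} and~\ref{vsurvive} to exhibit a position where one image carries $w$ and the other carries $v$ aligned at the left (resp.\ right) end, contradicting the prefix (resp.\ suffix) hypothesis. Your explicit remark that Lemma~\ref{vsurvive} must be read as covering the boundary cases $m<s_1$ and $m>s_n$ is a point the paper glosses over but uses implicitly, so it is a welcome clarification rather than a deviation.
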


\begin{proof}
Assume that $v,w,u$ are as in the lemma, and choose $S=\left\{ s_{1}
,...,s_{m}\right\} \neq S^{\prime}=\left\{ s_{1}^{\prime},...,s_{m}^{\prime
}\right\} \subseteq O_{v}(u)$ with $|S|=|S^{\prime}|=m$.

We first treat the case where $|v| \geq|w|$, and recall that $w$ is not a
prefix of $v$. Since $S \neq S^{\prime}$, we can choose $i$ maximal so that $%
s_{j} = s^{\prime}_{j}$ for $j < i$. Then $s_{i} \neq s^{\prime}_{i}$; we
assume without loss of generality that $s_{i} < s^{\prime}_{i}$. Since $%
s_{i} \in S$, we know that $s_{i} \in O_{v}(u)$. Since $s^{\prime}_{i-1} =
s_{i-1} < s_{i} < s^{\prime}_{i}$, by Lemma~\ref{vsurvive} $s_{i} +
(i-1)(|w| - |v|) \in O_{v}(R_{u}^{v \rightarrow w}(S^{\prime}))$. Also, by
Lemma~\ref{wsurvive}, $s_{i} + (i-1)(|w| - |v|) \in O_{w}(R_{u}^{v
\rightarrow w}(S))$. Since $w$ is not a prefix of $v$, this means that $%
R_{u}^{v \rightarrow w}(S) \neq R_{u}^{v \rightarrow w}(S^{\prime})$,
completing the proof of injectivity in this case.

We now treat the case where $|v| \leq|w|$, and recall that $v$ is not a
suffix of $w$. Since $S \neq S^{\prime}$, we can choose $i$ maximal so that $%
s_{m - j} = s^{\prime}_{m - j}$ for $j < i$. Then $s_{m - i} \neq
s^{\prime}_{m - i} $; we assume without loss of generality that $s_{m - i} <
s^{\prime}_{m - i}$. Since $s^{\prime}_{m - i} \in S^{\prime}$, we know that 
$s^{\prime}_{m - i} \in O_{v}(u)$. Since $s_{m-i} < s^{\prime}_{m-i} <
s^{\prime}_{m-i+1} = s_{m-i+1}$, by Lemma~\ref{vsurvive} $s^{\prime}_{m-i} +
(m-i)(|w| - |v|) \in O_{v}(R_{u}^{v \rightarrow w}(S))$. Also, by Lemma~\ref%
{wsurvive}, $s^{\prime }_{m-i} + (m-i-1)(|w| - |v|) \in O_{w}(R_{u}^{v
\rightarrow w}(S^{\prime}))$. Since $v$ is not a suffix of $w$, this means
that $R_{u}^{v \rightarrow w}(S) \neq R_{u}^{v \rightarrow w}(S^{\prime})$,
completing the proof of injectivity in this case and in general.
\end{proof}

\begin{lemma}
\label{preimage} Let $v,w\in L(\mathcal{A}^{\mathbb{Z}})$ such that $v$ respects the transition to $%
w$, $v$ is not a suffix of $w$, and $w$ is not a prefix of $v$. Then for any 
$u^{\prime}\in L(\mathcal{A}^{\mathbb{Z}})$ and any $m \leq|O_{w}(u^{\prime})|$, 
\begin{equation*}
|\{(u, S) \ : \ |S| = m, S \subseteq O_{v}(u), u^{\prime}=
R_{u}^{v\rightarrow w}(S)\}| \leq{\binom{|O_{w}(u^{\prime})| }{m}}. 
\end{equation*}
\end{lemma}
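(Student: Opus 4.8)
The plan is to bound the number of preimages $(u,S)$ of a fixed word $u'$ under the multiple-replacement map $R_u^{v\to w}$ by exhibiting an injection from the set of such preimages into the collection of $m$-element subsets of $O_w(u')$. This will immediately give the stated bound $\binom{|O_w(u')|}{m}$. The key observation, which I would establish first, is that any preimage $(u,S)$ with $|S|=m$ and $u'=R_u^{v\to w}(S)$ determines, via Lemma~\ref{wsurvive}, a canonical $m$-element subset of $O_w(u')$: namely the locations of the $m$ replaced occurrences of $w$, which are $\{s_1,\,s_2+(|w|-|v|),\ldots,s_m+(m-1)(|w|-|v|)\}$. I would call this set $T(u,S)\subseteq O_w(u')$ and argue it has exactly $m$ elements (the inequality $s_1<s_2+(|w|-|v|)<\cdots$ from property (iv) of a respected transition guarantees distinctness).

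First I would define the candidate injection $(u,S)\mapsto T(u,S)$ and verify it lands in the set of $m$-element subsets of $O_w(u')$, which is precisely the content of Lemma~\ref{wsurvive}. The main work is then to show this map is injective, i.e. that $(u,S)$ can be recovered from $u'$ together with the target set $T\subseteq O_w(u')$. The reconstruction is the inverse operation: given the $m$ locations in $T$ where the inserted copies of $w$ sit inside $u'$, one replaces each such occurrence of $w$ by $v$, working say from right to left (so the shifting of coordinates is bookkept correctly, mirroring the left-to-right convention used to define $R_u^{v\to w}(S)$). This recovers a unique candidate word $u$ and, by undoing the coordinate shifts, a unique candidate set $S\subseteq O_v(u)$, so that $T=T(u,S)$. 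Hence no two distinct preimages can share the same $T$.

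The hard part will be verifying that this right-to-left un-replacement genuinely recovers $(u,S)$ and does not accidentally coincide for two different preimages, and this is exactly where the hypotheses that $v$ is not a suffix of $w$ and $w$ is not a prefix of $v$ enter. These are the same conditions that powered the injectivity of $R_u^{v\to w}$ in Lemma~\ref{injective}, and the cleanest route is to lean directly on that lemma: if $(u,S)$ and $(\tilde u,\tilde S)$ were distinct preimages of $u'$ with $T(u,S)=T(\tilde u,\tilde S)$, then in particular $u$ and $\tilde u$ would have to differ (or $S,\tilde S$ differ), and I would derive a contradiction with the fact that the replaced-$w$ positions are forced. Concretely, since the positions of the copies of $w$ in $u'$ are pinned down by $T$, and since $w$ is not a prefix of $v$ and $v$ is not a suffix of $w$, each copy of $w$ at a location in $T$ can be unambiguously identified as a genuinely replaced occurrence rather than one created incidentally by the overlap of $v$-tails with $w$-heads; this lets me peel off the replacements uniquely.

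Once injectivity of $(u,S)\mapsto T(u,S)$ is in hand, the count is immediate: the number of preimages is at most the number of $m$-element subsets of $O_w(u')$, namely $\binom{|O_w(u')|}{m}$, completing the proof. I expect the overlap analysis in the previous paragraph to be the only genuinely delicate point; the rest is bookkeeping of the coordinate shifts by multiples of $|w|-|v|$, which parallels the computations already carried out in Lemmas~\ref{wsurvive} and~\ref{vsurvive}.
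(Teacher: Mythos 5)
Your proposal follows essentially the same route as the paper: both map a preimage $(u,S)$ to the shifted set $\{s_1,\ s_2+(|w|-|v|),\ \ldots,\ s_m+(m-1)(|w|-|v|)\}\subseteq O_w(u')$ (the paper calls it $g(S)$) and bound the count by the number of $m$-element subsets of $O_w(u')$. One correction, though: the step you flag as the ``only genuinely delicate point'' is not delicate, and your plan to resolve it via Lemma~\ref{injective} and an overlap analysis is misplaced. Since $T(u,S)$ depends only on $S$, and property (iv) of a respected transition forces $s_1<s_2+(|w|-|v|)<\cdots<s_m+(m-1)(|w|-|v|)$ (as in the proof of Lemma~\ref{wsurvive}), you recover $S$ from $T$ by subtracting $(i-1)(|w|-|v|)$ from the $i$th smallest element of $T$; then $u$ is recovered from $u'$ and $S$ by reversing the replacements one at a time, each single replacement being invertible once its location is known. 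No identification of ``genuinely replaced'' versus ``incidental'' occurrences of $w$ is needed --- the binomial coefficient already counts all $m$-element subsets of $O_w(u')$, valid or not --- and the hypotheses that $v$ is not a suffix of $w$ and $w$ is not a prefix of $v$ play no role in this lemma (they are what power Lemma~\ref{injective}, not this one). Note also that Lemma~\ref{injective} fixes $u$ and varies $S$, so it could not be invoked directly to compare two preimages with different underlying words $u$ and $\tilde u$.
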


\begin{proof}
Assume that $v,w,u^{\prime}$ are as in the lemma, and denote the set above
by $f(u^{\prime})$. For any $(u,S)\in f(u^{\prime})$ we define $g(S)=\{s_{1}
,s_{2}+|w|-|v|,\ldots,s_{m}+(m-1)(|w|-|v|)\}$; note that by Lemma~\ref%
{wsurvive}, $g(S)\subseteq O_{w}(u^{\prime})$.

We claim that for any $S$, there is at most one $u$ for which $(u,S)\in
f(u^{\prime})$. One can find this $u$ by simply reversing each of the
replacements in the definition of $R_{u}^{v \rightarrow w}(S)$. Informally,
the only such $u$ is $u = R_{u^{\prime}}^{v \leftarrow w}(g(S))$, where $%
R_{u'}^{v\leftarrow w}$ is defined analogously to $R_{u}^{v\rightarrow w}$
with replacements of $w$ by $v$ made from right to left instead of $v$ by $w$ made from left to right.

Finally, since $g(S)\subseteq O_{w}(u^{\prime})$, and since $g$ is clearly
injective, there are less than or equal to ${\binom{|O_{w}(u^{\prime})|}{m}}$
choices for $S$ with $(u,S)\in f(u^{\prime})$ for some $u$, completing the
proof.
\end{proof}

We may now prove the desired relation for $v$, $w$ with $E(v)\subseteq E(w)$
under additional assumptions on $v$ and $w$. 

\begin{proposition}
\label{easycase} Let $X$ be a subshift, $\mu$ a measure of maximal entropy
of $X$, and $v,w\in L(X).$ If $v$ respects the transition to $w$, $v$ is not
a suffix of $w$, $w$ is not a prefix of $v$, and $E_{X}(v)\subseteq E_{X}(w)$%
, then 
\begin{equation*}
\mu(v)\leq\mu(w)e^{h_{top}(X)(|w|-|v|)}. 
\end{equation*}
\end{proposition}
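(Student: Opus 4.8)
The plan is to prove the inequality by a counting argument that compares the number of words in $L_n(X)$ containing many occurrences of $v$ with the number containing the corresponding occurrences of $w$, then extract the measure inequality via the entropy machinery of Corollary~\ref{SMBcor}. First I would reduce to the case where $\mu$ is ergodic: since every MME is a generalized convex combination of ergodic MMEs and both $\mu(v)$ and $\mu(w)$ depend linearly on $\mu$, the desired inequality for all ergodic MMEs implies it for arbitrary MMEs. So assume $\mu$ is ergodic, fix the F\o lner sequence $\{F_n\}$ (here intervals $[0,n)$) from Corollary~\ref{SMBcor}, and work with large windows of length $n$.

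The heart of the argument is a double-counting of the map $R^{v\to w}_u$. Fix a small $\epsilon>0$ and consider the set $G_n$ of words $u\in L_n(X)$ whose frequency of occurrences of $v$ is close to $\mu(v)$, i.e. $|O_v(u)| \approx n\mu(v)$; by the pointwise ergodic theorem $\mu(G_n)\to 1$. For each such $u$ I would choose a subset $S\subseteq O_v(u)$ of a fixed size $m \approx \alpha n$ (for a suitable density $\alpha < \mu(v)$) and form $R^{v\to w}_u(S)\in L_{n+m(|w|-|v|)}(X)$; the containment $E_X(v)\subseteq E_X(w)$ is exactly what guarantees that each such replacement keeps us inside $X$, so these images are legitimate words of the longer subshift. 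Counting the domain, the number of pairs $(u,S)$ with $u\in G_n$ is at least $|G_n|\binom{n\mu(v)}{m}$ up to lower-order corrections. Counting the range, Lemma~\ref{preimage} bounds the number of preimages of any fixed $u'$ by $\binom{|O_w(u')|}{m}$, and since the output words have length roughly $n$ as well (the difference being absorbed into the exponential factor), their count is controlled by $|L_{n+m(|w|-|v|)}(X)|$. Combining these through Lemma~\ref{counting} (or directly) yields an inequality of the shape
\begin{equation*}
|G_n|\binom{n\mu(v)}{m} \leq |L_{n+m(|w|-|v|)}(X)|\,\binom{n\mu(w)}{m}.
\end{equation*}

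I would then take logarithms, divide by $n$, and pass to the limit. Corollary~\ref{SMBcor} gives $\frac{1}{n}\log|G_n|\to h_{top}(X)$ and $\frac{1}{n}\log|L_{n+m(|w|-|v|)}(X)|\to h_{top}(X)(1+\alpha(|w|-|v|))$, while Stirling's approximation converts the binomial coefficients into entropy-type expressions in the densities $\mu(v)$, $\mu(w)$, and $\alpha$. After cancelling the common $h_{top}(X)$ terms, optimizing over the free density parameter $\alpha$ (letting it tend to its admissible extreme), and simplifying the resulting entropy inequality, the factor $e^{h_{top}(X)(|w|-|v|)}$ emerges and one is left precisely with $\mu(v)\leq\mu(w)e^{h_{top}(X)(|w|-|v|)}$. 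The main obstacle I anticipate is the bookkeeping in the counting step: one must ensure that the injectivity furnished by Lemma~\ref{injective} and the preimage bound of Lemma~\ref{preimage} are applied to the correct fixed-size families, that the frequency estimates from the ergodic theorem are uniform enough to survive the logarithmic limit, and that the Stirling asymptotics are arranged so the optimization over $\alpha$ cleanly produces the exponential correction rather than an inequality that is off by a suboptimal constant.
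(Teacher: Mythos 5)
Your proposal is essentially the paper's own argument: reduce to ergodic $\mu$, build a high-measure set of length-$n$ words with controlled occurrence counts, map each such word to many legal longer words via $R_u^{v\to w}$ on sets of size $\varepsilon n$ (legality from $E_X(v)\subseteq E_X(w)$, lower bound from Lemma~\ref{injective}, preimage bound from Lemma~\ref{preimage}), combine via Lemma~\ref{counting} and Corollary~\ref{SMBcor}, and extract the inequality by Stirling. The only details you leave implicit are that the good set must also bound $|O_w(u)|$ from \emph{above} (and one must account for the at most $2|w|$ new occurrences of $w$ each replacement can create) so that the preimage bound applies, and that the correct ``extreme'' for the replacement density is $\varepsilon\to 0$ after dividing the Stirling inequality by $\varepsilon$ --- both of which fall under the bookkeeping you already flag.
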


\begin{proof}
Let $\delta,\varepsilon\in\mathbb{Q}_{+}$. We may assume without loss of
generality that $\mu$ is an ergodic MME, since proving the desired
inequality for ergodic MMEs implies it for all MMEs by ergodic
decomposition. 

For every $n\in\mathbb{Z}_{+},$ we define 
\begin{equation*}
S_{n}:=\left\{ u\in L_{n}(X):\left\vert O_{v}(u)\right\vert \geq
n(\mu(v)-\delta)\text{ and }\left\vert O_{w}(u)\right\vert \leq n(\mu
(w)+\delta)\right\} . 
\end{equation*}

By the pointwise ergodic theorem (applied to $\chi_{[v]}$ and $\chi_{[w]}$), 
$\mu(S_{n}) \rightarrow1$. Then, by Corollary~\ref{SMBcor}, there exists $N$
so that for $n>N$, 
\begin{equation}
|S_{n}|>e^{n(h_{top}(X)-\delta)}.   \label{Sbound}
\end{equation}
For each $u\in S_{n}$, we define 
\begin{equation*}
A_{u}:=\left\{ R_{u}^{v\rightarrow w}(S):S\subseteq O_{v}(u)\text{ and }%
\left\vert S\right\vert =\varepsilon n\right\} 
\end{equation*}
(without loss of generality we may assume $\varepsilon n$ is an integer by
taking a sufficiently large $n$.)

Since each word in $A_u$ is obtained by making $\varepsilon n$ replacements
of $v$ by $w$ in a word of length $n$, all words in $A_u$ have length
$m := n + \varepsilon n (|w| - |v|)$.
Since $E_{X}(v)\subseteq E_{X}(w)$, we have that $A_{u}\subset L(X)$. Also,
by Lemma~\ref{injective}, 
\begin{equation*}
|A_{u}|={\binom{\left\vert O_{v}(u)\right\vert}{\left\vert S\right\vert} \geq {\binom{n(\mu(v)-\delta)}{\varepsilon n}} }
\end{equation*}
for every $u$.

On the other hand, for every $u^{\prime}\in\bigcup_{u\in S_{n}}A_{u}$ we
have that 
\begin{equation*}
\left\vert O_{w}(u^{\prime})\right\vert \leq n(\mu(w)+\delta)+n\varepsilon
(2|w|+1) 
\end{equation*}
(here, we use the fact that any replacement of $v$ by $w$ can create no more
than $2|w|$ \ new occurrences of $w$.) Therefore, by Lemma~\ref{preimage}, 
\begin{equation*}
\left\vert \left\{ u\in S_{n}:u^{\prime}\in A_{u}\right\} \right\vert \leq{%
\binom{n\left(\mu(w)+\delta+(2|w|+1)\varepsilon\right)}{\varepsilon n}.} 
\end{equation*}

Then, by Lemma~\ref{counting}, we see that for $n>N$, 
\begin{multline}
|L_{m}(X)|\geq\left\vert \bigcup_{u\in S_{n}}A_{u}\right\vert \geq |S_{n}|{%
\binom{n(\mu(v)-\delta)}{\varepsilon n}}{\binom{n(\mu(w)+\delta
+(2|w|+1)\varepsilon)}{\varepsilon n}}^{-1}  \label{imagebound} \\
\geq e^{n(h_{top}(X)-\delta)}{\binom{n(\mu(v)-\delta)}{\varepsilon n}} {%
\binom{n(\mu(w)+\delta+(2|w|+1)\varepsilon)}{\varepsilon n}}^{-1}.
\end{multline}

For readability, we define $x=\mu(v)-\delta$ and $y=\mu(w)+\delta$. 
We recall that by Stirling's approximation, for $a > b > 0$,
\[
\log\left(
\begin{array}
[c]{c}%
an\\
bn
\end{array}
\right)  =an\log(an)-bn\log(bn)-n(a-b)\log(n(a-b))+o(n)
.\]
Therefore, if we take logarithms and divide by $n$ on both sides of (\ref{imagebound}) and let $n$ approach infinity, we obtain
\begin{multline*}
h_{top}(X)(1+\varepsilon(|w|-|v|))\geq h_{top}(X)-\delta+x\log
x-(x-\varepsilon)\log(x-\varepsilon) \\
-(y+(2|w|+1)\varepsilon)\log(y+(2|w|+1)\varepsilon)+(y+2|w|\varepsilon
)\log(y+2|w|\varepsilon).
\end{multline*}
We subtract $h_{top}(X)$ from both sides, let $\delta\rightarrow0$, and
simplify to obtain 
\begin{multline*}
h_{top}(X)\varepsilon(|w|-|v|)\geq\varepsilon\log\mu(v)+(\mu(v)-\varepsilon
)\left( \log\frac{\mu(v)}{\mu(v)-\varepsilon}\right) \\
-\varepsilon\log(\mu(w)+(2|w|+1)\varepsilon)-(\mu(w)+2|w|\varepsilon
)\log\left( \frac{\mu(w)+(2|w|+1)\varepsilon}{\mu(w)+2|w|\varepsilon}\right)
.
\end{multline*}

We have that 
\begin{align*}
& \lim_{\varepsilon\rightarrow0}\frac{\mu(v)-\varepsilon}{\varepsilon} \log%
\frac{\mu(v)}{\mu(v)-\varepsilon} \\
& =\lim_{\varepsilon\rightarrow0}\frac{\mu(v)}{\varepsilon}\log\frac{\mu (v)%
}{\mu(v)-\varepsilon} \\
& =1,
\end{align*}
and 
\begin{align*}
& \lim_{\varepsilon\rightarrow0}-\frac{\mu(w)+2|w|\varepsilon}{\varepsilon }%
\log\left( \frac{\mu(w)+(2|w|+1)\varepsilon}{\mu(w)+2|w|\varepsilon}\right)
\\
& =\lim_{\varepsilon\rightarrow0}-\frac{\mu(w)}{\varepsilon}\log\left( \frac{%
\mu(w)+(2|w|+1)\varepsilon}{\mu(w)+2|w|\varepsilon}\right) \\
& =\lim_{t\rightarrow0}-\frac{1}{t}\log\left( \frac{1+(2|w|+1)t} {1+2|w|t}%
\right) \\
& =-1.
\end{align*}

This implies (by dividing by $\varepsilon$ and taking limit on the previous
estimate) that 
\begin{equation*}
h_{top}(X)(|w|-|v|)\geq\log\mu(v)-\log\mu(w). 
\end{equation*}
Exponentiating both sides and solving for $\mu(v)$ completes the proof.
\end{proof}

Our strategy is now to show that any pair $v, w$, the cylinder sets $[v]$
and $[w]$ may each be partitioned into cylinder sets of the form $[\alpha v
\beta]$ and $[\alpha w \beta]$ where the additional hypotheses of Theorem~%
\ref{easycase} hold on corresponding pairs. For this, we make the additional
assumption that $X$ has positive entropy to avoid some pathological examples
(for instance, note that if $X = \{0^{\infty}\}$, then it's not even possible
to satisfy the hypotheses of Theorem~\ref{easycase}!)


\begin{definition}
	Let $X$ be a subshift and $v\neq w\in L(X)$. We define
	\begin{align*}
	X_{resp(v\rightarrow w)}  & :=\{x\in\left[  v\right]  :\\
	\exists N,M  & \in\mathbb{Z}_{+} \text{ s.t. } \alpha v\beta=x_{[-N,M)}\text{ respects
		the transition to }\alpha w\beta,\\
	& \alpha v\beta\text{ is not a suffix of }\alpha w\beta\text{, and }\\
	& \alpha w\beta\text{ is not a prefix of }\alpha v\beta\}.
	\end{align*}
\end{definition}
\begin{proposition}
\label{extend} \label{transition} Let $X$ be a subshift with positive topological entropy, $\mu$ an ergodic
measure of maximal entropy of $X$, and $v\neq w\in L(X)$. There exists $G^{v,w}\subset X_{resp(v\rightarrow w)}$ such that $\mu(G^{v,w})=\mu(v)$. \\

\end{proposition}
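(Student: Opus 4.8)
The plan is to show that $\mu$-almost every point of $[v]$ lies in $X_{resp(v\rightarrow w)}$; since $X_{resp(v\rightarrow w)}\subseteq[v]$, this produces a co-null (in $[v]$) Borel set to take as $G^{v,w}$, giving $\mu(G^{v,w})=\mu([v])=\mu(v)$. I may assume $\mu([v])>0$ (otherwise $G^{v,w}=\emptyset$ works) and, replacing $X$ by $\mathrm{supp}(\mu)$, that $\mu$ has full support, which preserves $h_{top}(X)>0$. The only roles of positive entropy are that it forces $\mu$ to be non-atomic, hence $\mu$-a.e.\ $x$ to be aperiodic, and that it guarantees the long ``rare'' words used as markers below.

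For a fixed aperiodic $x$ I would build a good context $\alpha,\beta$ from an unbordered marker word. Fix an unbordered word $p\in L(X)$: since an infinite word has unbordered factors of unbounded length unless it is eventually periodic, and $\mu$-a.e.\ point is aperiodic, such $p$ exist of unbounded length. For $\mu$-a.e.\ $x$, the pointwise ergodic theorem (Theorem~\ref{ergthm}) applied to $\chi_{[p]}$ shows $p$ recurs infinitely often on both sides of the origin; let $\alpha$ begin at the rightmost occurrence of $p$ ending at or before $0$ and let $\beta$ end at the leftmost occurrence of $p$ starting at or after $|v|$, so that $V:=\alpha v\beta$ and $W:=\alpha w\beta$ both begin and end with $p$. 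Provided no occurrence of $p$ meets the central block $[0,|v|)$ (and its $w$-analogue in $W$), these two markers are consecutive, so $p$ occurs in $V$, and in $W$, only at the two ends.

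Next I would verify that this end-marker structure yields every condition defining $X_{resp(v\rightarrow w)}$; this is the delicate overlap analysis promised in the introduction. Because $p$ is unbordered and appears in $V$ only as prefix and suffix, any two occurrences of $V$ in a word are either disjoint or overlap in the unique ``chained'' fashion in which the terminal $p$ of one is the initial $p$ of the next; in the chained case, replacing the left copy by $W$ shifts the right copy by exactly $|w|-|v|$ onto a surviving occurrence, and the disjoint case is trivial, so conditions (i)--(iv) of Definition~\ref{respect} hold, and the same reasoning shows occurrences of $W$ to the left of a replacement survive. The prefix/suffix conditions follow too: if $V$ were a suffix of $W$ or $W$ a prefix of $V$, the marker $p$ would be forced into an interior position of $W$ (resp.\ $V$), which by ``$p$ only at the ends'' can occur only when $|v|=|w|$, and then only if $v=w$, contradicting $v\neq w$.

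Finally I would upgrade to full measure. For a single marker $p$ the construction fails only on $\{x\in[v]:\text{some occurrence of }p\text{ meets }[0,|v|)\}$, whose measure is at most $(|p|+|v|)\mu([p])$ by shift-invariance. Taking a sequence of unbordered markers $p_k$ with $|p_k|\,\mu([p_k])\to 0$ and letting $G^{v,w}$ be the union of the corresponding good sets, the exceptional set has measure at most $\inf_k(|p_k|+|v|)\mu([p_k])=0$, so $\mu(G^{v,w})=\mu(v)$. The main obstacle is producing such markers: positive entropy together with the Shannon--McMillan--Breiman theorem (Theorem~\ref{SMBthm}) supplies words of measure $\approx e^{-h_{top}(X)|p|}$, which beats the linear factor $|p|$, but arranging that such rare words can \emph{simultaneously} be taken unbordered (equivalently, that $L(X)$ contains unbordered words whose measure decays quickly enough) is the crux, and is where the self-overlap combinatorics underlying Lemmas~\ref{wsurvive}--\ref{preimage} must be combined with positive entropy.
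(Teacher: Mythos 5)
Your overall strategy -- find, for a.e.\ $x\in[v]$, a context $\alpha,\beta$ making $\alpha v\beta$ respect the transition to $\alpha w\beta$, then take $G^{v,w}$ to be the good set -- matches the paper's, but your mechanism (unbordered end-markers $p$, with chained overlaps allowed) is genuinely different from the paper's, and it has two concrete gaps. The first: every condition that involves the word $W:=\alpha w\beta$ rather than $V:=\alpha v\beta$ -- namely condition (iii) of Definition~\ref{respect} and the requirement that $\alpha v\beta$ not be a suffix of $\alpha w\beta$ -- needs control over where $p$ occurs \emph{inside $W$}, not inside $V$. Your exceptional-set estimate $(|p|+|v|)\mu([p])$ only bounds the event that an occurrence of $p$ in $x$ meets $[0,|v|)$, i.e.\ it controls occurrences of $p$ in $V$ (a subword of $x$). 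The word $W$ is obtained by substituting $w$ for the central $v$, and an occurrence of $p$ straddling the central $w$-block is a property of the triple $(\alpha,w,\beta)$ that is not an event about $x$ of measure $O(|p|\mu(p))$; your parenthetical ``and its $w$-analogue in $W$'' is exactly the unproven step. If $p$ does occur in the interior of $W$, then a suffix of $W$ of length $\ell>|p|$ can equal a prefix of $V$, a left occurrence of $W$ can overlap the replaced $V$ non-trivially, and (iii) fails; likewise your argument that $V$ is not a suffix of $W$ collapses. (This particular sub-point is salvageable another way: $\alpha v$ a suffix of $\alpha w$ forces $\alpha$ to be periodic with period $|w|-|v|$, which contradicts unborderedness of $p$ once $|p|\geq 2|w|$ -- essentially the periodicity argument the paper uses via the set $P$ -- but that is not the argument you gave, and it does not repair (iii).) The second gap is the one you flag yourself: the existence of unbordered words $p_k\in L(X)$ with $|p_k|\,\mu(p_k)\to 0$. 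Positive entropy gives you many words of measure $\approx e^{-|p|h}$ and aperiodicity gives you unbordered factors of unbounded length, but you never show these two requirements can be met simultaneously, and the proof cannot close without it.

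It is worth seeing how the paper sidesteps both issues. Instead of an unbordered marker, it takes $\beta$ to be the shortest extension such that $x_{[-N',M)}=\alpha v\beta$ contains \emph{every} positive-measure $N'$-letter word; minimality of $M$ forces the terminal $N'$-block of $\alpha v\beta$ to occur exactly once in $\alpha v\beta$, and positive entropy ($|Q_{N'}|\geq 2N'$) forces $M>N'+|v|$, so this block lies entirely inside $\beta$ and is therefore also the terminal block of $\alpha w\beta$. Consequently any two occurrences of $\alpha v\beta$ (or of $\alpha w\beta$ to the left of an $\alpha v\beta$) are separated by more than $M$, all overlaps are vacuous rather than ``chained,'' and conditions (i)--(iv) become trivial; the suffix condition is handled separately by excluding the null set of points with eventually periodic left tail. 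This buys a proof with no marker combinatorics at all, at the cost of a context $\alpha v\beta$ whose length is not controlled -- which is harmless since Proposition~\ref{easycase} is applied separately to each cell of the resulting countable partition.
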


\begin{proof}
Define 
\begin{equation*}
Q:=\left\{ \gamma\in L(X):\mu(\gamma)>0\right\} 
\end{equation*}
and, for all $n \in\mathbb{N}$, define $Q_{n} := Q \cap A^{n}$.

Recall that 
\begin{equation*}
h_{\mu}(X)=\lim_{n\rightarrow\infty}\frac{1}{n}\sum\nolimits_{w\in
A^{n}}-\mu(w)\log\mu(w). 
\end{equation*}
The only positive terms of this sum are those corresponding to $w \in Q_{n}$%
, and it's a simple exercise to show that when $\sum_{i=1}^{t} \alpha_{i} = 1
$, $\sum_{i=1}^{t} -\alpha_{i} \log\alpha_{i}$ has a maximum value of $\log t
$. Therefore, 
\begin{equation*}
h_{\mu}(X) \leq\liminf_{n \rightarrow\infty} \frac{1}{n} \log|Q_{n}|. 
\end{equation*}
Since $h_{\mu}(X) > 0$, $\left\vert Q_{n}\right\vert $ grows exponentially.
Therefore, there exists $n_{2}\in\mathbb{Z}_{+}$ such that for every $n\geq
n_{2}$ we have that $\left\vert Q_{n}\right\vert $ $\geq2n.$

Let 
\begin{align*}
N & :=\max\left\{ n_{2},\left\vert v\right\vert \right\} +1, \\
P & :=\left\{ x \in X \ : \ x_{(-\infty, 0)} \text{ periodic with period
less than } |w| \right\} , \\
S & :=\left\{ x \in X \ : \ \forall\gamma\in Q, \gamma\text{ is a subword of 
} x_{[0, \infty)} \right\} ,\text{ and} \\
G^{v,w} & :=[v]\cap S \diagdown P.
\end{align*}

 Since $\mu$ has
positive entropy, it is not supported on points with period less than $|w|$,
and so for each $i \leq|w|$, there exists a word $u_{i} \in L_{i+1}(X)$ with
different first and last letters. Then the pointwise ergodic theorem
(applied to $\chi_{[u_{1}]}, \ldots, \chi_{[u_{|w| - 1}]}$ with $F_n = [-n,0)$) implies that $%
\mu(P) = 0$. The pointwise ergodic theorem (applied to $\chi_{[\gamma]}$ for 
$\gamma\in Q$ with $F_n = [0,n]$) shows that $\mu(S)=1$, and so $\mu(G^{v,w})=\mu(v)$. \\

Now we will prove that $G^{v,w}\subset X_{resp(v\rightarrow w)}$.
Let $x\in R$. If for every $n$, $x_{(-n,0)}v$ is a suffix of $x_{(-n,0)}w$,
then clearly $|w|\geq |v|$, and for any $i>0$, the $(i+|w|)$th letters from the
end of $x_{(-\infty,0)}v$ and $x_{(-\infty,0)}w$ must be the same, i.e. $%
x(-i)=x(-i-|w|+|v|)$. This would imply $x\in P$, which is not possible.

We can therefore define $N^{\prime}\geq N$ to be minimal so that for $%
\alpha^x= x_{[-N^{\prime}, 0)}$, $\alpha^x v$ is not a suffix of $\alpha^x w$.
(Obviously if $|v| \geq|w|$, then $N^{\prime}= N$.)

Since $x \in S$, we can define the minimal $M$ so that all $N^{\prime} $%
-letter words of positive $\mu$-measure are subwords of $x_{\left[
-N^{\prime},M\right) }$; for brevity we write this as $Q_{N^{\prime}}
\sqsubset x_{\left[ -N^{\prime},M\right) }$.

Since $M$ is the first
natural with $Q_{N^{\prime}}\sqsubset x_{\left[ -N^{\prime},M\right) }$,
then 
\begin{equation*}
\left\vert O_{x_{\left[ M-N^{\prime},M\right) }}(x_{\left[ -N^{\prime
},M\right) })\right\vert =1, 
\end{equation*}
i.e. the $N^{\prime}$-letter suffix of $x_{[-N^{\prime}, M)} = \alpha^x v \beta^x
$ appears only at the end of $\alpha^x v \beta^x$. 
Since $N^{\prime}\geq N \geq n_{2}$, $\left\vert Q_{N^{\prime}}\right\vert $ 
$\geq 2N^{\prime}$, and so $M > 2N^{\prime} \geq N^{\prime} + |v|$, implying that
the aforementioned $N^{\prime}$-letter suffix of $\alpha^x v \beta^x$ is 
also the $N^{\prime}$-letter suffix of $\alpha^x w \beta^x$.

First, it is clear that $\alpha^x v \beta^x$ is not a suffix of $\alpha^x w \beta^x$
, since $\alpha^x v$ was not a suffix of $\alpha^x w$ by definition of $\alpha^x$.
Since the $N^{\prime}$-letter suffix of $\alpha^x w \beta^x$ appears only once
within $\alpha^x v \beta^x$, we see that $\alpha^x w \beta^x$ cannot be a prefix of $%
\alpha^x v \beta^x$ either.

It remains to show that $\alpha^x v\beta^x=x_{\left[ -N^{\prime},M\right) }$
respects the transition to $\alpha^x w\beta^x.$ Suppose that a word $u\in L(X)$
contains overlapping copies of $\alpha^x v \beta^x$, i.e. we have $i,j\in
O_{\alpha^x v\beta^x}(u)$ with $j>i$. Since $\left\vert O_{x_{\left[ M-N^{\prime
},M\right) }}(x_{\left[ -N^{\prime},M\right) })\right\vert =1$ we have that $%
j > i + M$; otherwise the $N^{\prime}$-letter suffix of $\alpha^x v
\beta^x= x_{[i, i+N^{\prime}+M)}$ would be a non-terminal subword of $%
\alpha^x v \beta^x= x_{[j, j+N^{\prime}+M)}$. Then $j + |w| - |v| > 
i + M + |w| - |v| > i$, and so property (iv) is verified. Since 
$j > i + M$, the
central $v$ within $x_{[i, i+N^{\prime}+M})$ is disjoint from $x_{[j,
j+N^{\prime }+M})$, and so $j + |w| - |v| \in O_{v}(R_{u}^{v\rightarrow
w}(i))$, verifying property (i).

For property (ii), the same argument as above shows that when $i,j\in
O_{\alpha^x v\beta^x}(u)$ with $i>j$, $i > j + M$. Again this means that
the central $v$ within $x_{[i, i+N^{\prime}+M)}$ is disjoint from $%
x_{[j, j+N^{\prime}+M)}$, and so $j \in O_{v}(R_{u}^{v\rightarrow w}(i))$%
, verifying property (ii) and completing the proof.

For property (iii), we simply note that the proof of (ii) is completely
unchanged if we instead assumed $j \in O_{\alpha^x w\beta^x}(u)$, since the $%
N^{\prime}$-letter suffixes of $\alpha^x w \beta^x$ and $\alpha^x v \beta^x$ are the
same.

\end{proof}

\begin{remark}\label{cp}
For $x\in G^{v,w}$ (as in Proposition~\ref{extend}) we denote by $\alpha^{x}$ and $\beta
^{x}$ the words $\alpha$ and $\beta$ constructed in the proof. 	
\end{remark}

\begin{lemma}\label{cl}
For $x \neq y\in G^{v,w}$, it is not possible for either of $\alpha^x$, $\alpha^y$ to be a proper suffix of the other, and
if $\alpha^x = \alpha^y$, then it is not possible for either of $\beta^x, \beta^y$ to be a proper prefix of the other.
\end{lemma}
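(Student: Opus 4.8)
The plan is to unpack the definitions of $\alpha^x$ and $\beta^x$ from the proof of Proposition~\ref{extend} and use the two key minimality properties built into that construction: $N'$ is chosen \emph{minimal} so that $\alpha^x v$ fails to be a suffix of $\alpha^x w$, and $M$ is chosen minimal so that $Q_{N'} \sqsubset x_{[-N', M)}$, the latter forcing $|O_{x_{[M-N',M)}}(x_{[-N',M)})| = 1$. First I would record that $\alpha^x = x_{[-N'_x, 0)}$ and $\alpha^y = x'_{[-N'_y, 0)}$ where I write $x, y$ as the two distinct points (renaming to avoid clashing with the internal $u', u$ notation). The suffix claim concerns the leftward (negative) coordinates, while the prefix claim concerns the rightward ones, so I would treat them separately.

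For the suffix claim, suppose toward a contradiction that $\alpha^x$ is a proper suffix of $\alpha^y$ (the symmetric case is identical), so $N'_x < N'_y$ and the last $N'_x$ letters of $\alpha^y$ equal $\alpha^x$. The idea is that the minimality of $N'_y$ is then violated: since $\alpha^x v$ fails to be a suffix of $\alpha^x w$, and $\alpha^x$ sits as the terminal block of $\alpha^y$, one should be able to conclude that already $\alpha^{y,\text{short}} v$ (the truncation of $\alpha^y$ to length $N'_x$) fails to be a suffix of the correspondingly truncated word, contradicting that $N'_y > N_x' \ge N$ was forced to be larger. The cleanest route is to argue directly that if $\alpha^y v$ were a suffix of $\alpha^y w$, then comparing the final $N'_x + |v|$ (resp.\ $|w|$) letters would show $\alpha^x v$ is a suffix of $\alpha^x w$, the negation of how $N'_x$ was chosen; conversely the minimality of $N'_y$ says no strictly shorter length works, and $N'_x$ is strictly shorter, so this is exactly the contradiction.

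For the prefix claim, assume $\alpha^x = \alpha^y$ (so in particular $N'_x = N'_y =: N'$) and suppose $\beta^x$ is a proper prefix of $\beta^y$; then $M_x < M_y$ and the same point-data on $[-N', M_x)$ agrees for both $x$ and $y$ up to coordinate $M_x$. Here I would invoke the minimality of $M_x$: it is the \emph{first} natural number with $Q_{N'} \sqsubset x_{[-N', M_x)}$, and since $\alpha^x = \alpha^y$ and $\beta^x$ is a prefix of $\beta^y$, the words $x_{[-N',M_x)}$ and $y_{[-N',M_x)}$ coincide. If $\beta^x \ne \beta^y$ as proper prefix forces $x \ne y$ only through later coordinates, but the pair $\alpha^x v \beta^x = x_{[-N',M_x)}$ is already determined, and since $x \neq y$ we must have the words differ somewhere; the agreement on $[-N', M_x)$ together with $M_x$ being minimal for the covering property pins down $x_{[-N',M_x)}$ uniquely as a function of $(\alpha, v, \beta^x)$, so $x$ and $y$ would share the same configuration on $[-N', M_x)$ and hence $\beta^x = \beta^y$, contradicting properness.

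The main obstacle I anticipate is the prefix case: one has to argue that the minimal covering length $M$ is genuinely determined by the visible word $x_{[-N',M)}$ (i.e.\ that ``$M$ is the first place $Q_{N'}$ is fully seen'' is an intrinsic property of the prefix $\alpha^x v \beta^x$ and not of invisible future coordinates), so that two points sharing a prefix that already covers $Q_{N'}$ must share the \emph{same} minimal $M$ and hence the same $\beta$. I would make this precise by noting that $Q_{N'} \sqsubset x_{[-N',M)}$ depends only on $x_{[-N',M)}$, so if $\beta^x$ is a proper prefix of $\beta^y$ then $x_{[-N',M_x)} = y_{[-N',M_x)}$ already satisfies the covering property, forcing $M_y \le M_x$ by minimality of $M_y$ and contradicting $M_y = N' + |v| + |\beta^y| > N' + |v| + |\beta^x| = M_x$.
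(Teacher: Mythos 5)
Your proposal is correct and follows essentially the same route as the paper's own proof: the suffix claim falls to the minimality of $N'$ (a shorter terminal block already witnessing the non-suffix condition would contradict how the longer point's $N'$ was chosen), and the prefix claim falls to the minimality of $M$ once one observes that the covering property $Q_{N'} \sqsubset x_{[-N',M)}$ depends only on the visible word. Your final paragraph states the prefix argument precisely and correctly; the only blemishes are the somewhat muddled intermediate sentences (e.g.\ the hypothetical ``if $\alpha^y v$ were a suffix of $\alpha^y w$'' and the off-by-$N'$ expression for $M_y$), which do not affect the substance.
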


\begin{proof}
	Let $x \neq y\in G^{v,w}$. We write $\alpha^x v \beta^x = x_{[-N'_x, M_x)}$ and $\alpha^y v \beta^y= y_{[-N'_y, M_y)}$. We recall that 
$\alpha^x= x_{[-N'_x, 0)}$ was chosen as the minimal $N'_x$ 
(above a certain $N_x$ dependent only on $v$ and $X$) so that $\alpha^x v$ is not a suffix of $\alpha^x w$,
and that $\alpha^y = y_{[-N'_y, 0)}$ was defined similarly using minimal $N'_y$ above some $N_y$.
If $\alpha^y$ were a proper suffix of $\alpha^x$, then $N'_y < N'_x$ and 
$\alpha^y = x_{[-N'_y, 0)}$. Since by construction $\alpha^y v$ is not a suffix of $\alpha^y w$, this would contradict the minimality of $\alpha^x$.
A trivially similar argument shows that $\alpha^x$ is not a proper suffix of $\alpha^y$.

Now, assume that $\alpha^x = \alpha^y$; we denote their common value by $\alpha$ and their common length by 
$N'$. Recall that $\beta^x = x_{[|v|, M_x)}$ was chosen using the minimal
$M_x$ so that $\alpha^x v \beta^x$ contains all $N'_x$-letter words of positive $\mu$-measure, and 
that $\beta^y$ was defined similarly using minimal $M_y$ for $y$.
If $\beta^y$ were a proper prefix of $\beta^x$, then $M_y < M_x$ and 
$\beta^y = x_{[|v|, M_y)}$. Since $\alpha v \beta^y$ contains all $N'$-letter words of positive $\mu$-measure, 
this would contradict the minimality of $\beta^x$. A trivially similar argument shows that $\beta^x$ is not a proper prefix of 
$\beta^y$.

\end{proof}

We may now prove the main result of this section.

\begin{theorem}
\label{hardcase} Consider any $X$ a subshift with positive entropy, $\mu$ a measure of maximal entropy of 
$X$, and $v,w\in L(X).$ If $E_{X}(v)\subseteq E_{X}(w)$ then 
\begin{equation*}
\mu(v)\leq\mu(w)e^{h_{top}(X)(|w|-|v|)}. 
\end{equation*}
\end{theorem}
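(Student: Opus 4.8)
The plan is to reduce the general statement to Proposition~\ref{easycase} by cutting $[v]$ and $[w]$ into matching families of cylinder sets on which the extra hypotheses of that proposition hold. First I would dispose of the trivial cases: if $v=w$ the claimed inequality is an equality, and exactly as in the proof of Proposition~\ref{easycase} I may assume $\mu$ is an ergodic MME, since the general case then follows by ergodic decomposition together with linearity of the entropy map. With $\mu$ ergodic I would invoke Proposition~\ref{extend} to produce a set $G^{v,w}\subseteq X_{resp(v\rightarrow w)}$ with $\mu(G^{v,w})=\mu(v)$, and for each $x\in G^{v,w}$ the associated words $\alpha^x,\beta^x$ (Remark~\ref{cp}) for which $\alpha^x v\beta^x$ respects the transition to $\alpha^x w\beta^x$, $\alpha^x v\beta^x$ is not a suffix of $\alpha^x w\beta^x$, and $\alpha^x w\beta^x$ is not a prefix of $\alpha^x v\beta^x$.

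Next I would check that the containment lifts, i.e. $E_X(\alpha^x v\beta^x)\subseteq E_X(\alpha^x w\beta^x)$, where throughout I center these words so that $v$ (resp. $w$) occupies $[0,|v|)$ (resp. $[0,|w|)$). This is immediate from $E_X(v)\subseteq E_X(w)$: given $z\in[\alpha^x v\beta^x]\subseteq[v]$, the complement configuration of $z$ relative to $v$ lies in $E_X(v)\subseteq E_X(w)$, so replacing the central $v$ by $w$ yields a point $z'\in X$ whose left ray $z'_{(-\infty,0)}=z_{(-\infty,0)}$ still ends in $\alpha^x$ and whose right ray $z'_{[|w|,\infty)}=z_{[|v|,\infty)}$ still begins with $\beta^x$; thus $z'\in[\alpha^x w\beta^x]$ and the two complement configurations agree. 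Proposition~\ref{easycase} therefore applies to each pair, giving the per-cylinder inequality $\mu(\alpha^x v\beta^x)\le \mu(\alpha^x w\beta^x)\,e^{h_{top}(X)(|w|-|v|)}$.

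The heart of the argument, and the step I expect to be the main obstacle, is the bookkeeping that assembles these per-cylinder inequalities into the global one without double counting. I would consider the collection of \emph{distinct} cylinder sets $[\alpha^x v\beta^x]$ as $x$ ranges over $G^{v,w}$; each lies in $[v]$ and contains $x$, so together they cover $G^{v,w}$. If two of them met in a point, then the two left words $\alpha^x,\alpha^y$ would both be suffixes of a common left ray (hence one a suffix of the other) and the two right words $\beta^x,\beta^y$ both prefixes of a common right ray (hence one a prefix of the other); Lemma~\ref{cl} forbids proper containment in each case, forcing $\alpha^x=\alpha^y$ and then $\beta^x=\beta^y$, so the two cylinders coincide. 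Hence the distinct cylinders $[\alpha_k v\beta_k]$ are pairwise disjoint, and combined with the covering property and $\mu(G^{v,w})=\mu(v)$ this yields $\sum_k \mu(\alpha_k v\beta_k)=\mu(v)$.

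Finally, the identical disjointness argument applied to the cylinders $[\alpha_k w\beta_k]\subseteq[w]$ (again using Lemma~\ref{cl}) shows these are pairwise disjoint, so $\sum_k \mu(\alpha_k w\beta_k)\le\mu(w)$. Summing the per-cylinder inequalities over the distinct cylinders and inserting the two mass computations gives
\begin{equation*}
\mu(v)=\sum_k \mu(\alpha_k v\beta_k)\le e^{h_{top}(X)(|w|-|v|)}\sum_k \mu(\alpha_k w\beta_k)\le \mu(w)\,e^{h_{top}(X)(|w|-|v|)},
\end{equation*}
which is the claimed inequality for ergodic $\mu$; the general case follows by ergodic decomposition as noted above.
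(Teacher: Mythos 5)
Your proposal is correct and follows essentially the same route as the paper's proof: reduce to ergodic $\mu$, invoke Proposition~\ref{extend} to obtain $G^{v,w}$ and the words $\alpha^x,\beta^x$, lift the extender-set containment to $E_X(\alpha^x v\beta^x)\subseteq E_X(\alpha^x w\beta^x)$, apply Proposition~\ref{easycase} cylinder by cylinder, and use Lemma~\ref{cl} to get the disjointness needed to sum the inequalities. The only cosmetic difference is that you argue disjointness by contradiction (two cylinders meeting forces $\alpha^x=\alpha^y$ and $\beta^x=\beta^y$) while the paper argues the contrapositive directly; these are equivalent.
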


\begin{proof}
Consider $X, \mu, v, w$ as in the theorem. We may prove the result for only ergodic $\mu$, since it then follows for
all $\mu$ by ergodic decomposition. 

If $v=w$ the result is trivial, so we assume $v\neq w$. Let  $G^{v,w}$ be as in the proof of Proposition~\ref{extend}. 

For any $x \in G^{v,w}$, by definition $\alpha^x v \beta^x \in L(X)$. Since $E_{X}(v)\subseteq E_{X}
(w)$, we then know that $\alpha^x w \beta^x\in L(X)$ and $E_{X}(\alpha^x v \beta^x)\subseteq E_{X}(\alpha^x w \beta^x)$
for every $x\in G^{v,w}$. 
Now, using Proposition~\ref{easycase} we
have that 
\begin{equation}  \label{hardbound}
\mu(\alpha^x v \beta^x)\leq\mu(\alpha^x w \beta^x)e^{h_{top}(X)(|\alpha^x w \beta^x|-|\alpha^x v \beta^x|)}=\mu(\alpha^x w \beta^x)e^{h_{top}(X)(|w|-|v|)}.
\end{equation}

For convenience, we adopt the notation $[\alpha^x v \beta^x] = [\alpha^x.v\beta^x]$ and
$[\alpha^x w \beta^x] = [\alpha^x.w\beta^x]$ to emphasize the location of the words $\alpha^x v \beta^x$ and $\alpha^x w \beta^x$ within
these cylinder sets.

We now claim that if $\alpha^x v \beta^x \neq \alpha^y v \beta^y$ for $x,y \in G^{v,w}$, then $[\alpha^x v \beta^x]\cap [\alpha^y v \beta^y]=\emptyset$. To verify this, choose any $x,y$ for which $\alpha^x v \beta^x \neq \alpha^y v \beta^y$; then either $\alpha^x\neq\alpha^{y}$ or $\alpha^x = \alpha^y$ and $\beta^x\neq\beta^{y}$.
If $\alpha^x \neq \alpha^y$, then by Lemma~\ref{cl}, neither of $\alpha^x$ or $\alpha^y$ can be a suffix of the other, which means that the cylinder sets $[\alpha^x .v \beta^x]$ and $[\alpha^{y} .v \beta^{y}]$ are disjoint.

If instead $\alpha^x = \alpha^y$ and $\beta^x \neq \beta^y$, 
then again by Lemma~\ref{cl}, neither of $\beta^x$ or $\beta^y$ can be a prefix of the other,
meaning that the cylinder sets $[\alpha^x .v \beta^x]$ and $[\alpha^{y} .v \beta^{y}]$ are again disjoint. This proves the claim.

Let $K=\{\alpha^x v \beta^x \ : \ x\in G^{v,w}\}$.
Since all $[\alpha^x .v \beta^x]$ are
disjoint or equal,
$\{[\alpha .v \beta]\}_{\alpha v \beta \in K}$ forms a partition of $G^{v,w}$. Furthermore we also obtain that the
sets $\{[\alpha^x .w \beta^x]\}_{\alpha v \beta \in K}$ are disjoint, and so 
\begin{align*}
\sum_{\alpha v \beta \in K}\mu(\alpha v \beta) & =\mu(G^{v,w})=\mu(v)\text{ and} \\
\sum_{\alpha v \beta \in K}\mu(\alpha w \beta) & \leq\mu(w).
\end{align*}
In fact one can show the final inequality is an equality but we will not use this. We
may then sum (\ref{hardbound}) over $\alpha v \beta \in K$ yielding 
\begin{align*}
\mu(v) & =\sum_{\alpha v \beta \in K}\mu(\alpha v \beta) \\
& \leq e^{h_{top}(X)(|w|-|v|)}\sum_{\alpha v \beta \in K}
\mu(\alpha w \beta) \\
& \leq\mu(w)e^{h_{top}(X)(|w|-|v|)},
\end{align*}
as desired.

\end{proof}

The following corollary is immediate.

\begin{corollary}
\label{maincor} Let $X$ be a $\mathbb{Z}$-subshift, $\mu$ a measure of
maximal entropy of $X$, and $w,v\in L(X).$ If $E_{X}(v)=E_{X}(w)$, then for
every measure of maximal entropy of $X$, 
\begin{equation*}
\mu(v)=\mu(w)e^{h_{top}(X)(|w|-|v|)}. 
\end{equation*}
\end{corollary}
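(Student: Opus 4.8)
The plan is to obtain the equality directly from Theorem~\ref{hardcase} by exploiting the symmetry of the hypothesis. Since $E_{X}(v)=E_{X}(w)$ is a symmetric condition, it encodes the two one-sided containments $E_{X}(v)\subseteq E_{X}(w)$ and $E_{X}(w)\subseteq E_{X}(v)$ simultaneously. Because Theorem~\ref{hardcase} is already stated for an arbitrary measure of maximal entropy $\mu$ (not merely ergodic ones), each of these containments can be fed into it without further reduction.

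First I would apply Theorem~\ref{hardcase} using $E_{X}(v)\subseteq E_{X}(w)$, which gives
\[
\mu(v)\leq\mu(w)e^{h_{top}(X)(|w|-|v|)}.
\]
Next I would apply the same theorem with the roles of $v$ and $w$ interchanged, using the reverse containment $E_{X}(w)\subseteq E_{X}(v)$, which gives
\[
\mu(w)\leq\mu(v)e^{h_{top}(X)(|v|-|w|)}.
\]
Multiplying the second inequality through by $e^{h_{top}(X)(|w|-|v|)}$ rewrites it as
\[
\mu(w)e^{h_{top}(X)(|w|-|v|)}\leq\mu(v),
\]
which is precisely the reverse of the first inequality. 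Chaining the two then yields $\mu(v)\leq\mu(w)e^{h_{top}(X)(|w|-|v|)}\leq\mu(v)$, so all the inequalities collapse to equalities and the claimed identity follows.

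There is no real obstacle here, as all of the substance resides in Theorem~\ref{hardcase}; the corollary is genuinely immediate. The only points requiring a moment of care are bookkeeping ones: the positive-entropy hypothesis on $X$ must be carried along, since it is needed to invoke Theorem~\ref{hardcase}, and one should note that the exponents $|w|-|v|$ and $|v|-|w|$ cancel exactly so that the reversed inequality aligns with the original. With both directions established, antisymmetry forces the equality.
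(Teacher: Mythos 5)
Your proposal is correct and matches the paper's intent exactly: the paper simply declares the corollary ``immediate'' from Theorem~\ref{hardcase}, meaning precisely the double application you describe (once for each containment, with the exponents $|w|-|v|$ and $|v|-|w|$ cancelling). Your remark that the positive-entropy hypothesis must be carried along is a fair point of bookkeeping, since the in-text statement of the corollary omits it while the version in the introduction (and Theorem~\ref{hardcase} itself) includes it.
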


\subsection{Applications to synchronized subshifts}
\label{apps}




The class of synchronized subshifts provides many examples where $E_X(v) = E_X(w)$ is satisfied for many pairs $v,w$ of different lengths, allowing for the usage of Corollary~\ref{maincor}.

\begin{definition}
	For a subshift $X$, we say that $v\in L(X)$ is \textbf{synchronizing} if for
	every $uv,vw\in L(X)$, it is true that $uvw\in L(X).$ A subshift $X$ is
	\textbf{synchronized} if $L(X)$ contains a synchronizing word.
\end{definition}


The following fact is immediate from the definition of synchronizing word.

\begin{lemma}
	\label{synchlem} If $w$ is a synchronizing word for a subshift $X$, then for
	any $v \in L(X)$ which contains $w$ as both a prefix and suffix, $E_{X}(v) =
	E_{X}(w)$.
\end{lemma}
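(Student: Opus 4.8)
The plan is to prove the two inclusions $E_X(v)\subseteq E_X(w)$ and $E_X(w)\subseteq E_X(v)$ separately, in each case exploiting the synchronizing property of $w$ to splice together finite contexts and then passing to infinite contexts by the compactness of $X$. Throughout I use the $\mathbb{Z}$-identification of extender sets with glued bi-infinite sequences. Since $w$ is both a prefix and a suffix of $v$, I may write $v = ws = tw$ for words $s,t$; an element of $E_X(v)$ is then a glued sequence $\ell r$ coming from a point $x\in X$ with $x_{[0,|v|)}=v$, $x_{(-\infty,0)}=\ell$, and $x_{[|v|,\infty)}=r$. The crucial feature I will maintain is that every construction keeps the \emph{same} left context $\ell$ and right context $r$, so that the glued sequences for $v$ and for $w$ literally coincide and the inclusions become genuine set inclusions in $\mathcal{A}^{\mathbb{Z}}$.

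For $E_X(v)\subseteq E_X(w)$, I would start from a point realizing $\ell v r\in X$ and aim to produce $\ell w r\in X$. By compactness it suffices to show that for every $k$ the finite word $a w b$ lies in $L(X)$, where $a=\ell_{[-k,0)}$ and $b=r_{[0,k)}$. Since $a v b = a w s b = a t w b\in L(X)$, the prefix $aw$ and the suffix $wb$ both lie in $L(X)$; a single application of the synchronizing property of $w$ then yields $awb\in L(X)$. Letting $k\to\infty$ gives $\ell w r\in X$, so the glued sequence $\ell r$ lies in $E_X(w)$.

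For the reverse inclusion $E_X(w)\subseteq E_X(v)$, I would start from $\ell w r\in X$ and seek $\ell v r\in X$, again reducing by compactness to showing $a v b\in L(X)$ for each $k$, with $a,b$ as above. Here the synchronizing property is used twice: from $aw\in L(X)$ and $ws=v\in L(X)$ I first obtain $av=aws\in L(X)$, and then, rewriting $av=atw$ and combining with $wb\in L(X)$, I obtain $avb=atwb\in L(X)$. Passing to the limit gives $\ell v r\in X$, hence $\ell r\in E_X(v)$.

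I expect the only delicate points to be bookkeeping rather than real obstacles: verifying that each construction preserves the exact contexts $\ell,r$ so that the glued sequences match (this is what upgrades the argument from a mere correspondence to an equality of subsets of $\mathcal{A}^{\mathbb{Z}}$), and confirming that nothing breaks when the prefix and suffix copies of $w$ overlap inside $v$ (i.e.\ when $|v|<2|w|$), since the argument uses only the prefix/suffix relations $v=ws=tw$. The compactness step, namely that a bi-infinite sequence lies in $X$ as soon as all its finite subwords lie in $L(X)$, is standard for subshifts and is precisely what carries the finite synchronizing property over to the infinite contexts appearing in the extender sets.
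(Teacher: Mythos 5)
Your proof is correct and is exactly the spelled-out version of the argument the paper omits (the paper simply declares the lemma ``immediate from the definition of synchronizing word''): writing $v=ws=tw$, applying the synchronizing property once for $E_X(v)\subseteq E_X(w)$ and twice for the reverse inclusion, and passing to infinite contexts by compactness. The bookkeeping points you flag (preserving the contexts $\ell,r$ under the gluing identification, and the harmlessness of overlapping prefix/suffix copies of $w$) are handled correctly.
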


\begin{definition}
	A subshift $X$ is \textbf{entropy minimal }if every subshift strictly
	contained in $X$ has lower topological entropy. Equivalently, $X$ is entropy
	minimal if every MME on $X$ is fully supported.
\end{definition}

The following result was first proved in \cite{Th}, but we may also derive it
as a consequence of Corollary~\ref{maincor} with a completely different proof.

\begin{theorem}
	\label{synchunique} Let $X$ be a synchronized subshift. If $X$ is entropy
	minimal then $X$ has a unique measure of maximal entropy.
\end{theorem}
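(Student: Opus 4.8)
The plan is to prove that any two \emph{ergodic} MMEs coincide; uniqueness then follows at once from the ergodic decomposition, since every MME is a generalized convex combination of ergodic MMEs. So I fix a synchronizing word $c$, set $k=|c|$ and $\lambda=e^{h_{top}(X)}$, and let $\mu,\nu$ be ergodic MMEs. By entropy minimality both are fully supported, so $\mu([c])>0$ and $\nu([c])>0$, which is what makes the induced map on $[c]$ available.

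The first step is to feed Corollary~\ref{maincor} through the synchronizing hypothesis. By Lemma~\ref{synchlem}, every word $v$ having $c$ as both a prefix and a suffix satisfies $E_X(v)=E_X(c)$, so Corollary~\ref{maincor} gives $\mu(v)=\mu(c)\,\lambda^{\,k-|v|}$, and likewise $\nu(v)=\nu(c)\,\lambda^{\,k-|v|}$. Thus the measure of any $c$-bordered word is pinned down up to the single scalar $\mu([c])$ (resp.\ $\nu([c])$), and the combinatorial factor $\lambda^{\,k-|v|}$ is identical for the two measures.

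Next I would pass to the first-return structure on $[c]$. Let $F$ be the set of first-return words (words beginning and ending with $c$ and having no interior occurrence of $c$), and for $f\in F$ let $j_f$ be the return length, so $|f|=j_f+k$. Because $c$ is synchronizing, return words concatenate freely: gluing $f_1,\dots,f_n\in F$ along their shared copies of $c$ yields a word $g=f_1\odot\cdots\odot f_n\in L(X)$ that again begins and ends with $c$, with $|g|=k+\sum_i j_{f_i}$. Identifying $[g]$ with the event ``the first $n$ return words are $f_1,\dots,f_n$,'' the previous step computes the normalized induced measure $\bar\mu=\mu|_{[c]}/\mu([c])$ on the countable-alphabet return dynamics:
\[
\frac{\mu(g)}{\mu([c])}=\frac{\mu([c])\,\lambda^{-\sum_i j_{f_i}}}{\mu([c])}=\prod_{i=1}^{n}\lambda^{-j_{f_i}}.
\]
Hence $\bar\mu$ makes successive return words i.i.d.\ with marginal $f\mapsto\lambda^{-j_f}$ (the constraint $\sum_{f\in F}\lambda^{-j_f}=1$ being automatic, as $\bar\mu$ is a probability measure). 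Crucially the scalar $\mu([c])$ cancels, so this distribution is independent of the measure; the identical computation for $\nu$ gives $\bar\mu=\bar\nu$. Finally, since $\mu$ is ergodic with $\mu([c])>0$, $\mu$-a.e.\ point visits $[c]$ in both time directions, so $\mu$ is reconstructed from $([c],\text{first return},\bar\mu)$ by the Kakutani tower whose height is $j_f$ on the set of points with first return word $f$. As $\bar\mu=\bar\nu$ and this height function is the same for both, and $\int r\,d\bar\mu=1/\mu([c])<\infty$ by Kac's lemma, the reconstruction forces $\mu=\nu$ (and in particular $\mu([c])=\nu([c])$), completing the proof.

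I expect the main obstacle to be the clean identification of $[g]$ with the return-word event: if $c$ overlaps itself, gluing can manufacture a spurious occurrence of $c$ straddling a joint, breaking the bijection between return-word decompositions and cylinders. I would remove this by arranging $c$ to be \emph{unbordered} (no proper prefix equal to a proper suffix), so that no two occurrences of $c$ overlap and the decomposition is unique. This is legitimate because any word of $L(X)$ containing a synchronizing word is itself synchronizing, and the positive entropy of $X$ supplies enough distinct words to select a synchronizing word with no nontrivial self-overlap. Verifying this reduction, together with the Poincar\'e-recurrence and Kac's-lemma bookkeeping for the tower, is the only genuinely technical part; everything else is a direct consequence of Corollary~\ref{maincor}.
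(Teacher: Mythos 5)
Your proof is correct, but it reconstructs the measure by a genuinely different route than the paper. Both arguments run on the same engine: Lemma~\ref{synchlem} together with Corollary~\ref{maincor} forces $\mu(v)=\mu(c)\lambda^{|c|-|v|}$ for every $c$-bordered word $v$. The paper then works cylinder by cylinder: for each $u\in L(X)$ it partitions $[u]$ (mod null sets, via the ergodic theorem) according to the minimal $c$-bordered word $\alpha u\beta$ surrounding $u$, sums to get $\mu(u)=\mu(c)\sum e^{h_{top}(X)(|c|-|\alpha u\beta|)}$, and eliminates the unknown scalar $\mu(c)$ by normalizing against $\sum_{a\in\mathcal{A}}\mu(a)=1$; no inducing or Kac's lemma is needed. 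You instead eliminate $\mu(c)$ by normalizing the induced measure on $[c]$, identify the return-word process as i.i.d.\ with the measure-independent marginal $f\mapsto\lambda^{-j_f}$, and recover $\mu$ from the Kakutani tower. Your route uses a bit more machinery but buys more structure -- it exhibits the induced system as Bernoulli, close in spirit to Thomsen's countable-state Markov picture that the paper deliberately avoids. One simplification: the self-overlap issue you flag is vacuous, since the shared copy of $c$ at each joint has length $|c|$, so every occurrence of $c$ in $f_1\odot\cdots\odot f_n$ lies inside a single $f_i$ and hence at a designated position; you therefore do not need to pass to an unbordered synchronizing word (a reduction whose existence you would otherwise have to justify). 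Finally, like the paper, you implicitly use positive entropy, since Corollary~\ref{maincor} rests on Theorem~\ref{hardcase}; the zero-entropy entropy-minimal case is degenerate and should be dispatched separately.
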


\begin{proof}
	Let $\mu$ be an ergodic measure of maximal entropy of such an $X$. Let $w$ be
	a synchronizing word, $u\in L(X)$ and
	\[
	R_{u}:=\left\{  x\in\left[  u\right]  :\left\vert O_{w}(x_{\left(
		-\infty,0\right]  })\right\vert \geq1\text{ and }\left\vert O_{w}(x_{\left[
		\left(  \left\vert u\right\vert ,\infty\right]  \right)  })\right\vert
	\geq1\right\}  .
	\]
	Since $X$ is entropy minimal, $\mu(w) > 0$, and so by the pointwise ergodic
	theorem (applied to $\chi_{[w]}$ with $F_{n} = [-n,0]$ or $(|u|, n]$),
	$\mu(R_{u}) = \mu(u)$.
	
	For every $x\in R_{u}$ we define minimal $n \geq|w|$ and $m \geq|w| + |u|$ so
	that $g_{u}(x):=x_{\left[  -n,m\right]  }$ contains $w$ as both a prefix and a
	suffix. Then $\{[g_{u}(x)]\}$ forms a partition of $R_{u}$.
	
	By Lemma~\ref{synchlem}, $E_X(w) = E_X(wvw)$ for all $v$ s.t. $wvw \in L(X)$. Then
	by Corollary \ref{maincor} we have that
	\[
	\mu(g_{u}(x))=\mu(w)e^{h_{top}(X)(\left\vert w\right\vert -\left\vert
		g_{u}(x)\right\vert )}.
	\]
	Since $g_{u}(R_{u})$ is countable we can write
	\[
	\mu(u)=\mu(R_{u}) = \mu(w)\sum_{g_{u}(x)\in g_{u}(R_{u})}e^{h_{top}
		(X)(\left\vert w\right\vert -\left\vert g_{u}(x)\right\vert )}.
	\]

	This implies that
	\[
	1=\sum_{a\in\mathcal{A}}\mu(a)=\mu(w)\sum_{a\in\mathcal{A}}\sum_{g_{a}(x)\in
		g_{a}(R_{a})}e^{h_{top}(X)(\left\vert w\right\vert -\left\vert g_{a}
		(x)\right\vert )}.
	\]
	We combine the two equations to yield
	\begin{align*}
	\mu(u)  &  =\frac{\sum\nolimits_{g_{u}(x)\in g_{u}(R_{u})}e^{h_{top}
			(X)(\left\vert w\right\vert -\left\vert g_{u}(x)\right\vert )}}{\sum
		_{a\in\mathcal{A}}\sum_{g_{a}(x)\in g_{a}(R_{a})}e^{h_{top}(X)(\left\vert
			w\right\vert -\left\vert g_{a}(x)\right\vert )}}\\
	&  =\frac{\sum\nolimits_{g_{u}(x)\in g_{u}(R_{u})}e^{-h_{top}(X)\left\vert
			g_{u}(x)\right\vert }}{\sum_{a\in\mathcal{A}}\sum\nolimits_{g_{a}(x)\in
			g_{a}(R_{a})}e^{-h_{top}(X)\left\vert g_{a}(x)\right\vert }}.
	\end{align*}

	Since the right-hand side is independent of the choice of the measure we
	conclude there can only be one ergodic measure of maximal entropy, which
	implies by ergodic decomposition that there is only one measure of maximal entropy.
\end{proof}

In \cite{CT}, one of the main tools used in proving uniqueness of the measure
of maximal entropy for various subshifts was boundedness of the quantity
$\frac{|L_{n}(X)|}{e^{nh_{top}(X)}}$. One application of our results is to
show that this quantity in fact converges to a limit for a large class of
synchronized shifts.

\begin{definition}
	A measure $\mu$ on a subshift $X$ is \textbf{mixing} if, for all measurable
	$A,B$,
	\[
	\lim_{n \rightarrow\infty} \mu(A \cap\sigma_{-n} B) = \mu(A) \mu(B).
	\]
	
\end{definition}

\begin{theorem}
	\label{limit}Let $X$ be a synchronized entropy minimal subshift such that the
	measure of maximal entropy is mixing. We have that
	\[
	\lim_{n\rightarrow\infty}\frac{\left\vert L_{n}(X)\right\vert }{e^{nh_{top}
			(X)}}\text{ exists.}%
	\]
	
\end{theorem}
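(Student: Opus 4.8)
The plan is to fix a synchronizing word $w$, write $L=|w|$ and $\lambda=e^{h_{top}(X)}$, and count the words of $L_{n}(X)$ that contain $w$ by cutting each such word at its \emph{first} and \emph{last} occurrence of $w$. By Theorem~\ref{synchunique} there is a unique MME $\mu$, which we are assuming is mixing. I would introduce three families of words: $\mathcal{P}_{i}$, the length-$i$ words $p$ with $pw\in L(X)$ and $w$ occurring in $pw$ only at position $i$; $\mathcal{S}_{j}$, the length-$j$ words $s$ with $ws\in L(X)$ and $w$ occurring in $ws$ only at position $0$; and $B_{\ell}$, the length-$\ell$ words beginning and ending with $w$ (so $b_{\ell}:=|B_{\ell}|=0$ for $\ell<L$ and $b_{L}=1$). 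Writing $P_{i}=|\mathcal{P}_{i}|$ and $S_{j}=|\mathcal{S}_{j}|$, the structural heart of the argument is the identity
\[
c_{n}:=\#\{u\in L_{n}(X):\ w\sqsubset u\}=\sum_{i+\ell+j=n}P_{i}\,b_{\ell}\,S_{j}.
\]
To prove it, given $u\in L_{n}(X)$ containing $w$ with first occurrence at position $a$ and last at position $z$, I set $p=u_{[0,a)}$, $m=u_{[a,z+L)}$, $s=u_{[z+L,n)}$, so that $p\in\mathcal{P}_{a}$, $m\in B_{z+L-a}$, $s\in\mathcal{S}_{n-z-L}$. Conversely, for any triple $(p,m,s)$ the synchronizing property glues the pieces: $pm\in L(X)$ and $ms\in L(X)$ follow by synchronizing $w$, and since $pm$ ends in $w$ one more application of the synchronizing property to $pm$ and $ws$ yields $pms\in L(X)$. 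The defining conditions on $\mathcal{P}$ and $\mathcal{S}$ guarantee that the first and last occurrences of $w$ in $pms$ fall exactly at the cut points (an occurrence before $|p|$ would depend only on $pw$, and one after $|p|+\ell-L$ only on $ws$), so the correspondence is a bijection.

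Next I would supply the two asymptotic inputs. For the \emph{middle} blocks, Lemma~\ref{synchlem} gives $E_{X}(m)=E_{X}(w)$ for every $m\in B_{\ell}$, so Corollary~\ref{maincor} yields $\mu(m)=\mu(w)\lambda^{L-\ell}$. Since $\bigsqcup_{m\in B_{\ell}}[m]=[w]\cap\sigma_{-(\ell-L)}[w]$, summing and invoking mixing of $\mu$ gives $b_{\ell}\,\mu(w)\lambda^{L-\ell}=\mu\big([w]\cap\sigma_{-(\ell-L)}[w]\big)\to\mu(w)^{2}$, hence $b_{\ell}\lambda^{-\ell}\to\gamma:=\mu(w)\lambda^{-L}>0$. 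This is exactly where mixing is indispensable: it is what forces the ratio $b_{\ell}/\lambda^{\ell}$ to converge rather than merely stay bounded. For the \emph{end} pieces, let $Z_{k}$ be the number of length-$k$ words of $L(X)$ avoiding $w$. Entropy minimality forces $\limsup_{k}\tfrac1k\log Z_{k}<\log\lambda$: otherwise an empirical-measure limit along a sequence of $w$-avoiding words would be an MME assigning zero mass to $[w]$, contradicting full support. Since each $s\in\mathcal{S}_{j}$ injectively yields the $w$-avoiding word $(ws)_{[1,L+j)}$, we get $S_{j}\le Z_{L+j-1}$ and symmetrically $P_{i}\le Z_{i+L-1}$, so both $\sum_{i}P_{i}\lambda^{-i}$ and $\sum_{j}S_{j}\lambda^{-j}$ converge; the same estimate shows $Z_{n}=o(\lambda^{n})$, whence $|L_{n}(X)|=c_{n}+o(\lambda^{n})$.

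The conclusion is then a routine dominated-convergence step. Writing $\hat c_{n}=c_{n}\lambda^{-n}=\sum_{i,j}(P_{i}\lambda^{-i})(S_{j}\lambda^{-j})\,(b_{n-i-j}\lambda^{-(n-i-j)})$, the factors $b_{\ell}\lambda^{-\ell}$ are uniformly bounded and converge to $\gamma$, while $\sum_{i}P_{i}\lambda^{-i}=:\Pi$ and $\sum_{j}S_{j}\lambda^{-j}=:\Sigma$ are finite; dominating each term of the double sum by $(\sup_{\ell}b_{\ell}\lambda^{-\ell})(P_{i}\lambda^{-i})(S_{j}\lambda^{-j})$ lets me pass to the limit term by term, giving $\hat c_{n}\to\gamma\,\Pi\,\Sigma$. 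Combined with $|L_{n}(X)|=c_{n}+o(\lambda^{n})$ this shows $|L_{n}(X)|/\lambda^{n}\to\gamma\Pi\Sigma$, a finite positive number, so the limit exists.

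I expect the main obstacle to be the exact convolution identity rather than the final limit: one must verify carefully that the synchronizing property makes \emph{every} reassembled triple a legal word and that cutting at the first and last $w$ is a genuine bijection, which requires controlling how $w$ can overlap itself near the two cut points (this is precisely the delicate point that $\mathcal{P}$ and $\mathcal{S}$ are designed to handle). A secondary technical point is the strict entropy drop $\limsup_{k}\tfrac1k\log Z_{k}<\log\lambda$ for $w$-avoiding words, which I would extract from entropy minimality via the empirical-measure MME construction sketched above.
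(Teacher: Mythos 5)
Your proposal is correct and follows essentially the same route as the paper: decompose each word containing the synchronizing word $w$ at its first and last occurrences, use Lemma~\ref{synchlem}, Corollary~\ref{maincor} and mixing to show the count of words beginning and ending with $w$ satisfies $b_\ell\lambda^{-\ell}\to\mu(w)\lambda^{-|w|}$, control the $w$-avoiding tails via entropy minimality, and pass to the limit in the resulting convolution. The only differences are cosmetic (indexing conventions for the prefix/suffix pieces, and deriving the entropy gap for $w$-avoiding words via empirical measures rather than by directly comparing with the subshift of $w$-avoiding points), and you are in fact more explicit than the paper about why the cut-and-reassemble correspondence is a bijection.
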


\begin{proof}
	We denote $\lambda:=e^{h_{top}(X)}$ and define $\mu$ to be the unique measure
	of maximal entropy for $X$. Let $w\in L(X)$ be a synchronizing word and
	\[
	R_{n}:=\left\{  u\in L_{n}(X):w\text{ is a prefix and a suffix of }u\right\}
	.
	\]
	Lemma~\ref{synchlem} and Corollary \ref{maincor} imply that for every $u\in
	R_{n}$,
	\[
	\mu(u)=\mu(w)\lambda^{\left\vert w\right\vert -n}.
	\]

	This implies that
	\[
	\sum_{u\in R_{n}}\mu(u)=\left\vert R_{n}\right\vert \mu(w)\lambda^{\left\vert
		w\right\vert -n}%
	\]

	On the other hand
	\[
	\sum_{u\in R_{n}}\mu(u)=\mu(\left[  w\right]  \cap\sigma_{\left\vert
		w\right\vert -n}\left[  w\right]  ).
	\]
	Since the measure is mixing we obtain that
	\[
	\lim_{n\rightarrow\infty}\mu(\left[  w\right]  \cap\sigma_{\left\vert
		w\right\vert -n}\left[  w\right]  )=\mu(\left[  w\right]  )^{2}.
	\]

	Combining the three equalities above yields
	\[
	\lim_{n\rightarrow\infty}\frac{\left\vert R_{n}\right\vert }{\lambda^{n}
	}=\frac{\mu(w)}{\lambda^{\left\vert w\right\vert }}.
	\]

	For all $n\in\mathbb{N}$, we define
	\begin{align*}
	P_{n}  &  :=\left\{  u\in L_{n+|w|}(x):w\text{ is a prefix of }u,|O_{w}|
	(u)=1\right\}  \text{ and}\\
	S_{n}  &  :=\left\{  u\in L_{n+|w|}(x):w\text{ is a suffix of }u,|O_{w}|
	(u)=1\right\}
	\end{align*}
	to be the sets of $(n+|w|)$-letter words in $L(X)$ containing $w$ exactly once
	as a prefix/suffix respectively. We also define
	\[
	K_{n}:=\left\{  u\in L_{n}(x):|O_{w}(u)|=0\right\}
	\]
	to be the set of $n$-letter words in $L(X)$ not containing $w$. Then
	partitioning words in $L_{n}(X)\setminus K_{n}$ by the first and last
	appearance of $w$, recalling that $w$ is synchronizing, gives the formula
	\[
	\left\vert L_{n}(X)\right\vert =\left\vert K_{n}\right\vert + \sum_{0\leq i <
		j\leq n}|S_{i}||R_{j-i} ||P_{n-j}|,
	\]

	thus
	\begin{equation}
	\label{sumproduct}\frac{\left\vert L_{n}(X)\right\vert }{\lambda^{n}} =
	\frac{\left\vert K_{n}\right\vert }{\lambda^{n}} + \sum_{0\leq i < j \leq n}
	\frac{|S_{i}|}{\lambda^{i}} \frac{|R_{j-i}|} {\lambda^{j-i}} \frac{|P_{n-j}%
		|}{\lambda^{n-j}}.
	\end{equation}

	We now wish to take the limit as $n \rightarrow\infty$ of both sides of
	(\ref{sumproduct}). First, we note that since $X$ is entropy minimal,
	$h_{top}(X_{w}) < h_{top}(X)$, where $X_{w}$ is the subshift of points of $X$
	not containing $w$. Therefore,
	\[
	\limsup_{n\rightarrow\infty}\frac{1}{n}\log\left\vert K_{n}\right\vert <
	h_{top}(X).
	\]
	Since all words in $P_{n}$ and $S_{n}$ are the concatenation of $w$ with a
	word in $K_{n}$, $|P_{n}|, |S_{n}| \leq|K_{n}|$, and so
	\[
	\limsup_{n\rightarrow\infty}\frac{1}{n}\log\left\vert P_{n}\right\vert
	,\limsup_{n\rightarrow\infty}\frac{1}{n}\log\left\vert S_{n}\right\vert <
	h_{top}(X),
	\]
	implying that the infinite series
	\[
	\sum_{n=0}^{\infty}\frac{\left\vert P_{n}\right\vert }{\lambda^{n}} \text{ and
	} \sum_{n=0}^{\infty}\frac{\left\vert S_{n}\right\vert }{\lambda^{n}}\text{
		converge.}%
	\]
	We now take the limit of the right-hand side of (\ref{sumproduct}).
	\[
	\lim_{n \rightarrow\infty} \frac{\left\vert K_{n}\right\vert }{\lambda^{n}} +
	\sum_{0\leq i < j \leq n} \frac{|S_{i}|} {\lambda^{i}} \frac{|R_{j-i}%
		|}{\lambda^{j-i}} \frac{|P_{n-j}|}{\lambda^{n-j}} = \lim_{n \rightarrow\infty}
	\sum_{0 \leq k \leq n} \left(  \frac{|R_{k}|}{\lambda^{k}} \left(  \sum_{i =
		0}^{n-k} \frac{|S_{i}|}{\lambda^{i}} \frac{|P_{n - k - i}|} {\lambda^{n-k-i}%
	}\right)  \right)  .
	\]

	Since $\frac{|R_{k}|}{\lambda^{k}}$ converges to the limit $\frac{\mu
		(w)}{\lambda^{\left\vert w\right\vert }}$ and the series
		$\sum_{m = 0}^{\infty} \sum_{i = 0}^{m} \frac{|S_{i}|}{\lambda^{i}} \frac{|P_{m-i}
		|}{\lambda^{n-k-i}}$ converges, the above can be rewritten as
	\begin{multline*}
	\lim_{n \rightarrow\infty} \sum_{0 \leq k \leq n} \left(  \frac{|R_{k}
		|}{\lambda^{k}} \left(  \sum_{i = 0}^{n-k} \frac{|S_{i}|}{\lambda^{i}}
	\frac{|P_{n - k - i}|}{\lambda^{n-k-i}}\right)  \right)  = \frac{\mu
		(w)}{\lambda^{\left\vert w\right\vert }} \lim_{m \rightarrow\infty} \sum_{m =
		0}^{\infty} \sum_{i = 0}^{m} \frac{|S_{i}|}{\lambda^{i}} \frac{|P_{m- i}
		|}{\lambda^{n-k-i}}\\
	= \frac{\mu(w)}{\lambda^{\left\vert w\right\vert }} \sum_{n=0}^{\infty}
	\frac{\left\vert P_{n}\right\vert }{\lambda^{n}} \sum_{n=0}^{\infty}
	\frac{\left\vert S_{n}\right\vert }{\lambda^{n}}.
	\end{multline*}

	Recalling (\ref{sumproduct}), we see that $\lim_{n \rightarrow\infty}
	\frac{|L_{n}(X)|}{\lambda^{n}}$ converges to this limit as well, completing
	the proof.
\end{proof}

We will be able to say even more about a class of synchronized subshifts
called the $S$-gap subshifts.

\begin{definition}\label{Sgap}
	Let $S\subseteq\mathbb{N} \cup \{0\}$. We define the $S-$gap subshift $X_{S}$ by the set
	of forbidden words $\{10^{n}1 \ : \ n \notin S\}$. Alternately, $X_{S}$ is the
	set of bi-infinite $\{0,1\}$ sequences where the gap between any two nearest
	$1$s has length in $S.$
\end{definition}

It is immediate from the definition that $1$ is a synchronizing word for every
$S-$gap subshift. Also, all $S$-gap subshifts are entropy minimal (see Theorem
C, Remark 2.4 of \cite{CT2}), and as long as $\gcd(S+1)=1$, their unique
measure of maximal entropy is mixing (in fact Bernoulli) by Theorem 1.6 of
\cite{Cl2}. (This theorem guarantees that the unique MME is Bernoulli up to
period $d$ given by the gcd of periodic orbit lengths, and it's clear that
$S+1$ is contained in the set of periodic orbit lengths.)

In this case Climenhaga \cite{Cl} conjectured that the limit $\lim
_{n\rightarrow\infty}\frac{\left\vert L_{n}(X_{S})\right\vert }{e^{nh_{top}%
		(X_{S})}}$ existed; we prove this and we give an explicit formula for the limit.


\begin{corollary}
	\label{limit2}Let $S\subseteq\mathbb{N}$ satisfy $\gcd(S+1)=1$, let
	$\mu$ be the unique MME on $X_{S}$, and let $\lambda = e^{h_{top}(X_{S})}$. Then 
	$\displaystyle\lim_{n\rightarrow\infty}\frac{\left\vert L_{n}(X_{S})\right\vert
	}{\lambda^n}$ exists and is equal to
	$\displaystyle \frac{\mu(1)\lambda}{(\lambda-1)^{2}}$ when $S$ is infinite and
	$\displaystyle \frac{\mu(1)\lambda (1 - \lambda^{-(\max S) - 1})^2}{(\lambda-1)^{2}}$ when $S$ is finite. 
\end{corollary}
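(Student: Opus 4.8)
The plan is to read off the explicit constant directly from the proof of Theorem~\ref{limit}, whose final computation shows that for a synchronizing word $w$ the limit exists and equals $\frac{\mu(w)}{\lambda^{|w|}}\big(\sum_n |P_n|\lambda^{-n}\big)\big(\sum_n |S_n|\lambda^{-n}\big)$, with $P_n,S_n$ as defined there. Taking $w = 1$, which is synchronizing for every $S$-gap subshift and has length $|w| = 1$, this reads
\begin{equation*}
\lim_{n\to\infty}\frac{|L_n(X_S)|}{\lambda^n} = \frac{\mu(1)}{\lambda}\left(\sum_{n=0}^\infty \frac{|P_n|}{\lambda^n}\right)\left(\sum_{n=0}^\infty \frac{|S_n|}{\lambda^n}\right).
\end{equation*}
First I would record that the hypotheses of Theorem~\ref{limit} hold: $1$ is synchronizing, $X_S$ is entropy minimal (noted above), and since $\gcd(S+1)=1$ its unique MME is mixing (noted above). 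It then remains only to evaluate the two series.

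Next I would identify $P_n$ and $S_n$ for $w=1$. A word in $P_n$ has length $n+1$, begins with $1$, and contains $1$ exactly once, so the only candidate is $10^n$; symmetrically $S_n$ can only contain $0^n1$. Hence $|P_n| = |S_n|$ is $1$ or $0$ according to whether $10^n \in L(X_S)$ (the two admissibility conditions agree, e.g.\ since $X_S$ is reversal invariant because each forbidden word $10^m1$ is a palindrome). The key observation is that $10^n \in L(X_S)$ if and only if there exists $s \in S$ with $s \ge n$: such a word occurs precisely as a prefix of the legal block $10^s1$, and no longer run of $0$'s following a $1$ can occur in $X_S$. Consequently $|P_n| = 1$ for all $n$ when $S$ is infinite, while for finite $S$ with $M := \max S$ we have $|P_n| = 1$ for $0 \le n \le M$ and $|P_n| = 0$ for $n > M$.

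Finally I would sum the resulting geometric series. For infinite $S$, $\sum_{n\ge0}|P_n|\lambda^{-n} = \sum_{n\ge0}\lambda^{-n} = \lambda/(\lambda-1)$, and substituting yields $\mu(1)\lambda/(\lambda-1)^2$. For finite $S$, $\sum_{n=0}^{M}\lambda^{-n} = \lambda\,(1-\lambda^{-(M+1)})/(\lambda-1)$, and substituting (with $M = \max S$) yields $\mu(1)\lambda\,(1-\lambda^{-(\max S)-1})^2/(\lambda-1)^2$, which is the claimed value.

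The only genuinely delicate point is the characterization of which $10^n$ lie in $L(X_S)$, namely that for finite $S$ the longest admissible run of $0$'s has length $\max S$. This is exactly where the finiteness of $S$ enters and produces the correction factor $(1-\lambda^{-(\max S)-1})^2$; it relies on working with the entropy-minimal $S$-gap subshift, in which every maximal block of $0$'s coincides with a gap in $S$. Everything else is the product formula furnished by the proof of Theorem~\ref{limit} together with the evaluation of two geometric series.
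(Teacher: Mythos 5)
Your proof is correct and follows essentially the same route as the paper: apply the product formula obtained in the proof of Theorem~\ref{limit} with $w=1$, identify $|P_n|=|S_n|$ as the indicator of whether $10^n\in L(X_S)$, and sum the resulting geometric series separately in the infinite and finite cases. The only difference is that you spell out the evaluation of $|P_n|$ and $|S_n|$ (in particular the cutoff at $\max S$ in the finite case, which hinges on every maximal block of $0$'s being a genuine gap), a step the paper leaves as an exercise to the reader.
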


\begin{proof}
	Using the notation of the proof of Theorem~\ref{limit}, we define $w=1$ and write
	$\lambda=e^{h_{top}(X_{S})}$. If $S$ is infinite, it is easy to see that $\left\vert
	P_{i}\right\vert =\left\vert S_{i}\right\vert = 1$ for all $i$. As noted above,
	$X_{S}$ is entropy minimal and its unique measure of maximal entropy is
	mixing, and so the proof of Theorem~\ref{limit} implies that
	\[
	\lim_{n\rightarrow\infty}\frac{\left\vert L_{n}(X_{S})\right\vert
	}{e^{nh_{top}(X_{S})}}=\frac{\mu(1)}{\lambda}\left(  \sum_{i=0}^{\infty}
	\frac{1}{\lambda^{i}}\right)  ^{2}=\frac{\mu(1)}{\lambda}\left(  \frac
	{1}{1-\lambda^{-1}}\right)  ^{2}=\frac{\mu(1)\lambda}{(\lambda-1)^{2}}.
	\]
	
	If instead $S$ is finite (say $M = \max S$), then the reader may check that 
	$\left\vert P_{i}\right\vert$ and $\left\vert S_{i}\right\vert$ are both equal to $1$ 
	for all $i \leq M$ and equal to $0$ for all $i > M$. Then, the proof of Theorem~\ref{limit} implies that
	\[
	\lim_{n\rightarrow\infty}\frac{\left\vert L_{n}(X_{S})\right\vert
	}{e^{nh_{top}(X_{S})}}=\frac{\mu(1)}{\lambda}\left(  \sum_{i=0}^{M}
	\frac{1}{\lambda^{i}}\right)  ^{2}=\frac{\mu(1)}{\lambda}\left(  \frac
	{1 - \lambda^{-M-1}}{1-\lambda^{-1}}\right)  ^{2} = \frac{\mu(1)\lambda(1 - \lambda^{-M-1})^2}{(\lambda - 1)^2},
	\]
	completing the proof.
	
\end{proof}

\bigskip As noted in \cite{Cl}, a motivation for proving the existence of
this limit is to fill a gap from \cite{spandl} for a folklore formula for the 
topological entropy of $X_{S}$. Two proofs of this formula are presented
in \cite{Cl}, and Corollary \ref{maincor} yields yet another proof. 


\begin{corollary}
	Let $S\subseteq\mathbb{N} \cup \{0\}$ with $\gcd(S+1)=1$. Then $h_{top}(X_{S})=\log\lambda$,
	where $\lambda$ is the unique solution of
	\[
	1=\sum_{n\in S}\lambda^{-n-1}.%
	\]
	
\end{corollary}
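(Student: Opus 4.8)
The plan is to relate the topological entropy of $X_S$ to the growth rate $\lambda$ of $|L_n(X_S)|$, and then to extract the characteristic equation $1 = \sum_{n \in S} \lambda^{-n-1}$ from the combinatorial structure already established in the proof of Theorem~\ref{limit}. First I would recall that $1$ is a synchronizing word, so every word in $L(X_S)$ not beginning or ending in a long run of $0$s decomposes around its first and last occurrences of $1$. The key observation is that $R_n$, the set of $n$-letter words with $1$ as both a prefix and suffix, is in bijection with the set of admissible gap sequences: a word in $R_n$ is exactly a string $1 0^{a_1} 1 0^{a_2} 1 \cdots 1$ with each $a_i \in S$ and total length $n$. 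Thus the generating function $\sum_{n} |R_n| x^n$ factors through the gap generating function $G(x) := \sum_{n \in S} x^{n+1}$ as a geometric-type series $\sum_{k \geq 0} G(x)^k = (1 - G(x))^{-1}$.

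The main computation would be to identify the radius of convergence of this series. Since $|R_n|$ grows like $\lambda^n$ (up to subexponential factors) where $\lambda = e^{h_{top}(X_S)}$, and since Theorem~\ref{limit} establishes that $|R_n|/\lambda^n$ converges to a finite nonzero limit $\mu(1)/\lambda^{|w|} = \mu(1)/\lambda$, the generating function $\sum_n |R_n| x^n$ has radius of convergence exactly $\lambda^{-1}$ and diverges at $x = \lambda^{-1}$. From the factorization $(1 - G(x))^{-1}$, this divergence occurs precisely when $G(\lambda^{-1}) = 1$, i.e.\ when
\begin{equation*}
\sum_{n \in S} \lambda^{-n-1} = 1.
\end{equation*}
This yields the desired characteristic equation once I argue that $\lambda^{-1}$ is genuinely the relevant singularity rather than an accidental cancellation.

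The part requiring the most care is establishing that $\lambda^{-1}$ is the smallest positive root of $1 - G(x) = 0$ and that this root is well-defined, i.e.\ uniqueness of the solution $\lambda$. For this I would note that on the interval $(0, 1]$ the function $G(x) = \sum_{n \in S} x^{n+1}$ is continuous, strictly increasing, with $G(0) = 0$, so $G(x) = 1$ has at most one solution in $(0,1]$; the hypothesis $\gcd(S+1) = 1$ guarantees aperiodicity and hence that the growth rate is a genuine simple dominant singularity (for $\lambda > 1$, which holds whenever $X_S$ has positive entropy). The convergence result from Theorem~\ref{limit} does the heavy lifting of pinning the radius of convergence to $\lambda^{-1}$; the main obstacle is simply to make the generating-function bookkeeping rigorous, handling the finite-$S$ case (where $G$ is a polynomial) and the infinite-$S$ case uniformly, and to confirm that $|R_n| \to 0$ subexponentially relative to no smaller rate, so that the singularity sits exactly at $x = \lambda^{-1}$ as claimed.
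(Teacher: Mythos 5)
Your argument is correct, but it takes a genuinely different route from the paper. The paper's proof is a two-line measure-theoretic computation: up to the null set $\{x : x_0=1 \text{ and } x_n=0 \text{ for all } n>0\}$, the cylinder $[1]$ is the disjoint union $\bigsqcup_{n\in S}[10^n1]$, and since $1$ is synchronizing, Lemma~\ref{synchlem} and Corollary~\ref{maincor} give $\mu(10^n1)=\mu(1)\lambda^{-n-1}$; summing over $n\in S$ and dividing by $\mu(1)>0$ (which holds by entropy minimality) yields the characteristic equation immediately. You instead run a renewal/generating-function argument: $\sum_n|R_n|x^{n-1}=(1-G(x))^{-1}$ with $G(x)=\sum_{n\in S}x^{n+1}$ (note the off-by-one in your exponent: a word $10^{a_1}1\cdots 1$ of length $n$ satisfies $n-1=\sum(a_i+1)$, which does not affect the location of the singularity), and you pin the singularity at $x=\lambda^{-1}$ using the convergence $|R_n|/\lambda^n\to\mu(1)/\lambda>0$ from Theorem~\ref{limit}. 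This is sound: positivity of the limit forces divergence of the series at $x=\lambda^{-1}$, convergence for $0<x<\lambda^{-1}$ forces $G(x)<1$ there, and monotone convergence then gives $G(\lambda^{-1})=1$; strict monotonicity of $G$ gives uniqueness. The trade-off is that your route uses strictly more machinery --- you need mixing of the MME (via Theorem~\ref{limit}, hence the hypothesis $\gcd(S+1)=1$ in an essential way), whereas the paper's argument needs only entropy minimality and shift-invariance --- but in exchange it makes transparent the classical renewal-theoretic reason the equation $\sum_{n\in S}\lambda^{-n-1}=1$ appears, while the paper's route is a more direct showcase of Corollary~\ref{maincor}.
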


\begin{proof}
	For any $S$-gap shift $X_S$, we can write
\[
\left[1\right] = \left(\bigsqcup_{n = 0}^{\infty} \left[  10^{n}1\right]\right) \cup \{x \in X_S \ : \ x_0 = 1 \textrm{ and } \forall n > 0, x_n = 0\}.
\]
By shift-invariance, $\mu(10^{\infty})=0$, and so by Lemma~\ref{synchlem} and Corollary \ref{maincor},
	\[
	\mu(1)=\sum_{n\in S}\mu(10^{n}1)=\sum_{n\in S}\mu(1)e^{h_{top}(X_{S}%
		)(-n-1)}\text{.}%
	\]
Dividing both sides by $\mu(1)$ completes the proof.
	
\end{proof}

We also prove that for every $S-$gap subshift, the unique measure of maximal
entropy has highly constrained values, which are very similar to those of the
Parry measure for shifts of finite type. 

\begin{theorem}
	\label{value}Let $X_{S}$ be an $S-$gap subshift and $\mu$ the measure of
	maximal entropy. Then $\mu(1)=\frac{1}{\sum_{n\in S}(n+1)e^{-h_{top}(X_{S})(n+1)}}$,
	and for every $w\in L(X_{S})$, there exists a polynomial $f_{w}$ with integer
	coefficients so that $\mu(w)=k_{w}+\mu(1)f_{w}(e^{-h_{top}(X_{S})})$ for some
	integer $k_{w}$.
	
\end{theorem}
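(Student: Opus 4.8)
The plan is to prove the two claims separately, in both cases exploiting that $1$ is a synchronizing word and that, by Lemma~\ref{synchlem} and Corollary~\ref{maincor}, any $v \in L(X_S)$ having $1$ as both a prefix and a suffix satisfies $\mu(v) = \mu(1)\lambda^{1-|v|}$, where $\lambda = e^{h_{top}(X_S)}$. Throughout I would use that $\mu$ is the unique (hence ergodic) MME and that $\mu(1) > 0$ by entropy minimality.

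For the value of $\mu(1)$, I would partition $X_S$ up to a null set according to the \emph{cell} containing the coordinate $0$, where a cell is a maximal block of the form $10^n$ (a $1$ followed by the zeros preceding the next $1$). Since $\mu(1) > 0$ and $\mu$ is ergodic, almost every point has infinitely many $1$s in each direction, so this cell is well-defined $\mu$-a.e. The event that coordinate $0$ is the $j$-th symbol ($0 \le j \le n$) of a cell with gap $n \in S$ is a translate of $[10^n 1]$, so by shift-invariance it has measure $\mu(10^n 1) = \mu(1)\lambda^{-(n+1)}$. Summing over the $n+1$ offsets and over $n \in S$ gives $1 = \sum_{n\in S}(n+1)\mu(1)\lambda^{-(n+1)}$, which rearranges to the claimed formula $\mu(1) = 1/\sum_{n\in S}(n+1)e^{-h_{top}(X_S)(n+1)}$.

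For the second claim I would first reduce to a normal form: any $w \in L(X_S)$ is either $0^k$, or of the form $0^a v 0^b$ where $v$ begins and ends with $1$ (its \emph{core}) and $a,b \ge 0$. For a core word $v$, the formula above gives $\mu(v) = \mu(1)\lambda^{1-|v|}$, which already has the desired shape $k_w + \mu(1)f_w(\lambda^{-1})$ with $k_w = 0$. I would then peel off trailing and leading zeros using the elementary identities $\mu(u0^{b}) = \mu(u0^{b-1}) - \mu(u0^{b-1}1)$ and $\mu(0^{a}u) = \mu(0^{a-1}u) - \mu(10^{a-1}u)$, each obtained by partitioning a cylinder according to the symbol immediately after (resp. before) it. The all-zero words are handled by the same recursion, with base case the empty word, of measure $1$; this is the source of the integer constant $k_w = 1$ for the words $0^k$.

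The crux, and the step I expect to require the most care, is controlling the subtracted terms in these recursions. A term such as $\mu(u0^{b-1}1)$ corresponds to a word whose last two $1$s are separated by a gap of $b-1$ zeros; after the reduction this word begins and ends with $1$, so it lies in $L(X_S)$ and has measure $\mu(1)\lambda^{1-(\cdot)}$ exactly when $b-1 \in S$, and otherwise the term is simply $0$. In either case the subtracted term equals $\mu(1)$ times an integer polynomial in $\lambda^{-1}$, so a straightforward induction on the number of stripped zeros shows that every $\mu(w)$ has the form $k_w + \mu(1)f_w(\lambda^{-1})$ with $f_w \in \mathbb{Z}[\lambda^{-1}]$ and $k_w \in \{0,1\}$. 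The only genuinely delicate point is the bookkeeping of which implied gaps lie in $S$; everything else is a routine induction.
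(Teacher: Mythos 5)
Your proof is correct, and the first half (the formula for $\mu(1)$) is exactly the paper's argument: partition $X_S$ up to a null set into the translates $\sigma_i[10^n1]$, $0\le i\le n$, $n\in S$, and apply $\mu(10^n1)=\mu(1)\lambda^{-(n+1)}$. For the second half you take a genuinely different route. The paper runs a single symmetric induction on length, passing from $w$ to the four words $1w1$, $1w0$, $0w1$, $0w0$ and splitting on whether $1w1\in L(X_S)$; the only inputs are $\mu(1w1)=\mu(1)t^{1+|w|}$ (or $0$) and inclusion--exclusion on cylinders. You instead put $w$ into the normal form $0^a v 0^b$ with a core $v$ beginning and ending in $1$, evaluate $\mu(v)=\mu(1)\lambda^{1-|v|}$ directly, and strip zeros one at a time. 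Both arguments rest on the same key identity coming from Lemma~\ref{synchlem} and Corollary~\ref{maincor}, so neither is more powerful, but yours is more explicit (one can essentially read off $f_w$ and see that $k_w\in\{0,1\}$, with $k_w=1$ exactly for the all-zero words), while the paper's is more uniform and needs no case analysis on the shape of $w$. One point you should nail down when writing this up: the order of the zero-stripping matters. If you peel a trailing zero off $0^a v 0^b$ directly, the subtracted term $\mu(0^a v 0^{b-1}1)$ is not a core word. The induction goes through cleanly if you first establish $\mu(v0^b)$ for all cores $v$ and all $b$ (there the subtracted terms $v0^{b-1}1$ \emph{are} cores, of measure $0$ or $\mu(1)\lambda^{-|v|-b+1}$ according to whether $b-1\in S$), and only then peel leading zeros, noting that the subtracted terms $10^{j-1}v0^b$ are again of the already-handled form (core)$\cdot 0^b$. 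You flag this bookkeeping as the delicate point, and indeed it is the only one; with that ordering fixed, the argument is complete.
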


\begin{proof}
	As noted above, $S$-gap shifts are synchronized and entropy minimal, and so
	have unique measures of maximal entropy.
	
	Denote by $\mu$ the unique measure of maximal entropy for some $S-$gap
	subshift $X_{S}$, and for readability we define
	\[
	t=e^{-h_{top}(X)}.
	\]
	
	Since $X_S$ is entropy minimal, $\mu(1) > 0$, and so by the pointwise ergodic theorem
	(applied to $\chi_{[1]}$), $\mu$-a.e. point of $X_S$ contains infinitely many $1$s. 
	Therefore, we can partition points of $X_S$ according to the closest
	$1$ symbols to the left and right of the origin, and represent $X_S$ (up to a null set)
	as the disjoint union
	$\bigcup_{n\in S} \bigcup_{i=0}^{n} \sigma_i \left[10^{n}1\right]$. Then by 
	Lemma~\ref{synchlem} and Corollary \ref{maincor},
	\begin{align*}
	1  & =\sum_{n\in S}(n+1)\mu(10^{n}1)\\
	& =\sum_{n\in S}(n+1)\mu(1)t^{n+1}\text{,}%
	\end{align*}
	yielding the claimed formula for $\mu(1)$.
	
	Now we prove the general formula for $\mu(w)$, and will proceed by induction on the length $n$ of $w$. 
	For the base case $n=1$, $\mu(0) = 1 - \mu(1)$, verifying the theorem.

	Now, assume that the theorem holds for every $n\leq N$ for some $N \geq1$. Let
	$w\in L_{N-1}(X_{S})$, and we will verify the theorem for $1w1$, $1w0$, $0w1$,
	and $0w0$. If $1w1 \notin L(X_{S})$, then
	\begin{align*}
	\mu(1w1)  &  =0,\\
	\mu(1w0)  &  =\mu(1w) - \mu(1w1) = \mu(1w),\\
	\mu(0w1)  &  =\mu(w1) - \mu(1w1) = \mu(w1), \text{ and}\\
	\mu(0w0)  &  =1-\mu(1w1) - \mu(1w0) - \mu(0w1)= 1 - \mu(1w) - \mu(w1).
	\end{align*}
	The theorem now holds by the inductive hypothesis.
	
	If $1w1 \in L(X_{S})$, then as before $E_{X_{S}}(1w1)=E_{X_{S}}(1)$, implying
	\begin{align*}
	\mu(1w1)  &  = \mu(1) t^{1 + |w|},\\
	\mu(1w0)  &  = \mu(1w) - \mu(1w1) = \mu(1w) - \mu(1) t^{1 + |w|},\\
	\mu(0w1)  &  = \mu(w1) - \mu(1w1) = \mu(w1) - \mu(1) t^{1 + |w|}, \text{
		and}\\
	\mu(0w0)  &  = 1 - \mu(1w1) - \mu(1w0) - \mu(0w1) = 1 - \mu(1w) - \mu(w1) +
	\mu(1) t^{1 + |w|},
	\end{align*}
	again implying the theorem by the inductive hypothesis and completing the proof.
	







\end{proof}


\section{$\mathbb{G}-$subshifts}

\label{gsec}

Throughout this section, $\mathbb{G}$ will denote a countable amenable group
generated by a finite set $G=\left\{  g_{1},...,g_{d}\right\}  $ which is
torsion-free, i.e. $g^{n}=e$ if and only if $n=0$.
\subsection{Main result}

 For any $N=(N_{1}%
,...,N_{d})\in\mathbb{Z}_{+}^{d}$, we define $\mathbb{G}_{N}$ to be the
subgroup generated by $\left\{  g_{1}^{N_{1}},...,g_{d}^{N_{d}}\right\}  ,$
and use $\faktor{\mathbb{G}}{\mathbb{G}_{N}}$ to represent the collection 
$\left\{g\cdot\mathbb{G}_{N}:g\in\mathbb{G}\right\}$ of left cosets of $\mathbb{G}_N$. 
Clearly, $\left\vert
\faktor{\mathbb{G}}{\mathbb{G}_{N}}\right\vert =N_{1}N_{2}\cdots N_{d}$.

We again must begin with some relevant facts and definitions. The following
structural lemma is elementary, and we leave the proof to the reader.

\begin{lemma}
\label{one} For any amenable $\mathbb{G}$ and $F\Subset\mathbb{G}$, there
exists $N=(N_{1},...,N_{d})\in\mathbb{Z}_{+}^{d}$ such that for every
nonidentity $g \in\mathbb{G}_{N}$, $g\cdot F\cap F=\varnothing.$ 


\end{lemma}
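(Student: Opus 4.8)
The plan is to translate the conclusion into a statement about the finite set $FF^{-1}$. For any $g\in\mathbb{G}$ we have $g\cdot F\cap F\neq\varnothing$ exactly when $gf_{1}=f_{2}$ for some $f_{1},f_{2}\in F$, i.e. when $g\in FF^{-1}$; hence the assertion ``$g\cdot F\cap F=\varnothing$ for every nonidentity $g\in\mathbb{G}_{N}$'' is precisely the statement that $\mathbb{G}_{N}\cap D=\varnothing$, where $D:=FF^{-1}\setminus\{e\}$. Since $F$ is finite, $D$ is a finite set of nonidentity elements, so it suffices to produce a single $N$ with $\mathbb{G}_{N}\cap D=\varnothing$.

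Next I would exploit the monotonicity of the family $\{\mathbb{G}_{N}\}$ under divisibility: if $N_{i}\mid N_{i}'$ for every $i$ then $g_{i}^{N_{i}'}=(g_{i}^{N_{i}})^{N_{i}'/N_{i}}\in\langle g_{i}^{N_{i}}\rangle$, so $\mathbb{G}_{N'}\subseteq\mathbb{G}_{N}$. Thus if some $h$ satisfies $h\notin\mathbb{G}_{N^{(h)}}$ for one choice $N^{(h)}$, then $h\notin\mathbb{G}_{N}$ for every coordinatewise multiple $N$ of $N^{(h)}$. Consequently it is enough to expel the elements of $D$ one at a time and then take $N$ to be the coordinatewise least common multiple of the finitely many witnesses $N^{(h)}$, $h\in D$; this single $N$ excludes all of $D$ at once. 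This reduces the lemma to the single claim that $\bigcap_{N\in\mathbb{Z}_{+}^{d}}\mathbb{G}_{N}=\{e\}$, i.e. that every nonidentity $h$ is missing from some $\mathbb{G}_{N}$. When $\mathbb{G}=\mathbb{Z}^{d}$ with the standard generators this is transparent, and is what yields $|\mathbb{G}/\mathbb{G}_{N}|=N_{1}\cdots N_{d}$: if $h=(h^{(1)},\dots,h^{(d)})\neq 0$ then some $h^{(i)}\neq 0$, and any $N_{i}>|h^{(i)}|$ gives $N_{i}\nmid h^{(i)}$, whence $h\notin\mathbb{G}_{N}$.

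The main obstacle is establishing $\bigcap_{N}\mathbb{G}_{N}=\{e\}$ for general torsion-free $\mathbb{G}$, and this is exactly where the torsion-freeness hypothesis is used. Here the naive recipe ``take all $N_{i}$ large'' is unavailable: through conjugation and commutation the subgroup generated by high powers of the generators can still capture elements of short word length, so the magnitude of $N$ by itself does not control $\mathbb{G}_{N}$. Instead I would use that each $\langle g_{i}\rangle$ is infinite cyclic, so that no nonidentity element survives in $\bigcap_{n}\langle g_{i}^{n}\rangle$, and combine this with the divisibility monotonicity above, choosing the divisibility pattern of $N$ rather than merely its size in order to expel a fixed $h$; the finiteness of $D$ then lets the least-common-multiple step assemble these choices into one working $N$.
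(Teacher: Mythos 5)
The paper offers no proof of this lemma (it is explicitly ``left to the reader''), so there is nothing to compare against; judged on its own, your proposal correctly sets up the right skeleton but leaves the essential step unproved. The reduction to the finite set $D = FF^{-1}\setminus\{e\}$ is fine, as is the monotonicity $\mathbb{G}_{N'}\subseteq\mathbb{G}_{N}$ when $N_i\mid N_i'$ for all $i$, and the coordinatewise-lcm step that assembles one $N$ from finitely many witnesses. What remains --- that every nonidentity $h$ lies outside $\mathbb{G}_{N}$ for \emph{some} $N$ --- is the entire content of the lemma, and your final paragraph is a statement of intent rather than an argument. The fact that $\bigcap_{n}\langle g_i^{n}\rangle=\{e\}$ for each individual generator gives no control over the subgroup $\langle g_1^{N_1},\dots,g_d^{N_d}\rangle$ generated jointly: products and commutators of high powers of the generators can produce elements that lie in no single $\langle g_i^{N_i}\rangle$. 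For instance, in the discrete Heisenberg group with generators $x,y$ and center generated by $z=[x,y]$, the subgroup $\langle x^{N},y^{N}\rangle$ contains $z^{N^2}$; and in $BS(1,2)=\langle a,t \mid tat^{-1}=a^{2}\rangle$ (torsion-free, solvable, hence amenable) the subgroup $\langle a^{N_1},t^{N_2}\rangle$ contains $t^{-kN_2}a^{N_1}t^{kN_2}=a^{N_1/2^{kN_2}}$, hence contains $a$ itself whenever $N_1$ is odd, no matter how large $N_1$ and $N_2$ are. So ``take $N$ large'' genuinely fails, and even ``choose the divisibility pattern of $N$'' requires a group-specific computation in each of these examples; you have not exhibited any mechanism that works for an arbitrary finitely generated torsion-free amenable group.

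Concretely, then, the gap is the claim $\bigcap_{N\in\mathbb{Z}_{+}^{d}}\mathbb{G}_{N}=\{e\}$ for general $\mathbb{G}$ in the stated class. You correctly identify it as the main obstacle and correctly observe that torsion-freeness must enter here, but appealing to the infinite cyclicity of each $\langle g_i\rangle$ plus divisibility monotonicity does not close it: those two facts are compatible with a fixed nonidentity element being expressible inside $\langle g_1^{N_1},\dots,g_d^{N_d}\rangle$ for every $N$ via words mixing several generators. To complete the proof you would need either an additional structural hypothesis (e.g.\ $\mathbb{G}=\mathbb{Z}^{d}$ with the standard generators, where your argument is complete, or a Mal'cev-type coordinate argument for nilpotent groups) or a genuinely new idea handling the general case; as written, the proposal proves the lemma only for $\mathbb{Z}^{d}$.
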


As in the $\mathbb{Z}$ case, if $v,w\in L_{F}(\mathcal{A}^{\mathbb{G}})$ for
some $F\Subset\mathbb{G}$, we define the function $O_{v}:L(\mathcal{A}^{%
\mathbb{G}})\rightarrow \mathcal{P}(\mathbb{G})$ which sends a word to the set of
locations where $v$ appears as a subword, i.e. 
\begin{equation*}
O_{v}(u):=\left\{ g\in\mathbb{G}:\sigma_{g}(u) \in [v]\right\} . 
\end{equation*}
We also define the function $R_{u}^{v\rightarrow w}:O_{v}(u)\rightarrow L(%
\mathcal{A}^{\mathbb{G}})$, where $R_{u}^{v\rightarrow w}(g)$ is the word
you obtain by replacing the occurrence of $v$ at $g \cdot F$ within $u$ by $w
$.

We now again must define a way to replace many occurrences of $v$ by $w$
within a word $u$, but will do this via restricting the sets of locations
where the replacements occur rather than the pairs $(v,w)$. $\ $We say $%
S\subset\mathbb{G}$ is $F-$\textbf{sparse }if $g\cdot F\cap g^{\prime}\cdot
F=\varnothing$ for every unequal pair $g,g^{\prime}\in S$. When $v,w \in
L_{F}(X)$ and $S$ is $F-$sparse, we may simultaneously replace occurrences
of $v$ by $w$ at locations $g \cdot F$, $g \in S$ by $w$ without any of the
complications dealt with in the one-dimensional case, and we denote the
resulting word by $R_{u}^{v\rightarrow w}(S)$. Formally, $%
R_{u}^{v\rightarrow w}(S)$ is just the image of $u$ under the composition of 
$R_{u}^{v \rightarrow w}(s)$ over all $s \in S$.


The following lemmas are much simpler versions of Lemmas \ref{injective} and 
\ref{preimage} for $F$-sparse sets.

\begin{lemma}
\label{Ginjective} For any $F$, $v,w\in L_{F}(X)$, and $F$-sparse set $T
\subseteq O_{v}(u)$, $R_{u}^{v\rightarrow w}$ is injective on subsets of $T$.
\end{lemma}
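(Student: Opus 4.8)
The plan is to exploit the $F$-sparseness of $T$, which makes this lemma dramatically simpler than its one-dimensional counterpart (Lemma~\ref{injective}): because the translates $g\cdot F$ for $g\in T$ are pairwise disjoint, replacing $v$ by $w$ at one location in $T$ alters the contents of $u$ only on that single translate and leaves every other translate $g'\cdot F$ (for $g'\in T$) untouched. Consequently, for any $S\subseteq T$, the word $R_u^{v\to w}(S)$ agrees with $u$ off $\bigcup_{g\in T} g\cdot F$, carries $w$ on $g\cdot F$ for each $g\in S$, and carries $v$ on $g\cdot F$ for each $g\in T\setminus S$ (here using $T\subseteq O_v(u)$). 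The whole argument reduces to the observation that membership of $g$ in $S$ can be read off from the content of $R_u^{v\to w}(S)$ restricted to the shape $g\cdot F$.

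Concretely, first I would fix two distinct subsets $S\neq S'$ of $T$ and, without loss of generality, pick some $g\in S\setminus S'$. I would then evaluate both images on the shape $g\cdot F$. Since $g\in S$ and the replacement at $g$ is unaffected by the (disjoint) replacements made elsewhere, $\big(R_u^{v\to w}(S)\big)_{g\cdot F}=w$; since $g\notin S'$ while $g\in T\subseteq O_v(u)$, the location $g\cdot F$ is never overwritten in forming $R_u^{v\to w}(S')$, so $\big(R_u^{v\to w}(S')\big)_{g\cdot F}=v$. As $v\neq w$ (the case $v=w$ being trivial, since then $R_u^{v\to w}$ is constant and there is nothing to prove), these restrictions differ, whence $R_u^{v\to w}(S)\neq R_u^{v\to w}(S')$.

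The only point requiring care — and the reason the $F$-sparse hypothesis is essential — is the claim that the single-site replacements commute and do not interfere, so that the content of $R_u^{v\to w}(S)$ on a single translate $g\cdot F$ depends only on whether $g\in S$ and not on the order in which the replacements are performed. This is precisely the content of $F$-sparseness: the definition of $R_u^{v\to w}(S)$ as the composition of the maps $R_u^{v\to w}(s)$, $s\in S$, is well-posed exactly because distinct sites act on disjoint shapes. Once this is recorded, none of the counting, overlap analysis, or prefix/suffix hypotheses needed in Lemma~\ref{injective} arise, and the proof is complete.
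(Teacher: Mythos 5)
Your proof is correct and essentially identical to the paper's: both arguments pick some $g\in S\setminus S'$ and observe that, by $F$-sparseness, $R_u^{v\to w}(S)$ reads $w$ on $g\cdot F$ while $R_u^{v\to w}(S')$ reads $v$ there, so the images differ. The only quibble is your parenthetical about $v=w$: a constant map is not injective, so that case is not one where ``there is nothing to prove'' but rather one that must be (and, in the paper, tacitly is) excluded by assuming $v\neq w$; this has no effect on the applications, where $v=w$ makes the main theorem trivial anyway.
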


\begin{proof}
Fix $F, u, v, w, T$ as in the lemma. If $S \neq S' \subseteq T$, then
either $S \setminus S'$ or $S' \setminus S$ is nonempty; assume without 
loss of generality that it is the former. Then, if $s \in S \setminus S'$,
by definition $(R_{u}^{v\rightarrow w}(S))_{s + F} = w$ and
$(R_{u}^{v\rightarrow w}(S'))_{s + F} = v$, and so
$R_{u}^{v\rightarrow w}(S) \neq R_{u}^{v\rightarrow w}(S')$.
\end{proof}

\begin{lemma}
\label{Gpreimage} For any $F$ and $v,w\in L_{F}(X)$, any $F$-sparse set $T
\subseteq O_{v}(u)$, any $u^{\prime}$, and any $m \leq|T \cap
O_{w}(u^{\prime })|$, 
\begin{equation*}
|\{(u, S) \ : \ S \text{ is $F$-sparse}, |S| = m, S \subseteq T, u^{\prime}=
R_{u}^{v\rightarrow w}(S)\}| \leq{\binom{|T \cap O_{w}(u^{\prime})|}{m}}. 
\end{equation*}
\end{lemma}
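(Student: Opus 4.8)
The plan is to bound the number of valid pairs $(u,S)$ mapping to a fixed $u'$ by showing that each admissible $S$ is uniquely recoverable from $u'$, and that there are at most $\binom{|T \cap O_w(u')|}{m}$ candidate sets $S$. First I would observe that since $T$ is $F$-sparse and $S \subseteq T$, whenever $u' = R_u^{v\rightarrow w}(S)$, Lemma~\ref{Ginjective} (or directly the definition of the replacement) guarantees that every $s \in S$ satisfies $(u')_{s+F} = w$, so $S \subseteq T \cap O_w(u')$. This is the crucial containment: it pins each element of $S$ to a location where $w$ genuinely appears in $u'$, and the $F$-sparseness ensures these occurrences do not interfere.

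Next I would argue that for a fixed admissible $S$, the preimage $u$ is uniquely determined. Because the replacements occur at the disjoint translates $\{s+F : s \in S\}$, the word $u$ agrees with $u'$ outside $\bigcup_{s \in S}(s+F)$, and on each $s+F$ with $s \in S$ we must have $u_{s+F} = v$ (reversing the single replacement of $v$ by $w$). Thus $u$ is completely reconstructed from the pair $(u', S)$ by placing $v$ back at each location in $S$. Hence the map $(u,S) \mapsto S$ is injective on the set we are counting.

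Finally I would conclude by counting: every $S$ arising from an admissible pair is an $m$-element subset of $T \cap O_w(u')$, and each such $S$ corresponds to at most one $u$. Therefore the total number of pairs is at most the number of $m$-element subsets of $T \cap O_w(u')$, which is $\binom{|T \cap O_w(u')|}{m}$, as claimed.

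I do not expect any serious obstacle here; unlike the one-dimensional Lemma~\ref{preimage}, the $F$-sparseness hypothesis removes all the overlap complications, so no analogue of the $v\leftarrow w$ right-to-left reversal or the prefix/suffix conditions is needed. The only point requiring a little care is verifying the containment $S \subseteq T \cap O_w(u')$ rigorously, i.e. confirming that distinct $F$-sparse replacements really do leave a clean copy of $w$ at each replaced location without creating ambiguity about which occurrences were replaced — but this follows directly from the disjointness of the translates $s+F$.
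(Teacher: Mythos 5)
Your proof is correct and follows essentially the same route as the paper's: observe that $S \subseteq O_w(u')$ (hence $S \subseteq T \cap O_w(u')$) since each replacement leaves a copy of $w$ at $s + F$, note that $F$-sparseness makes $u$ uniquely recoverable from $(u', S)$ by placing $v$ back at each $s + F$, and count the $m$-element subsets of $T \cap O_w(u')$. The only difference is that you spell out the reconstruction of $u$ explicitly, whereas the paper defers to the argument of Lemma~\ref{preimage}; both are fine.
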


\begin{proof}
Fix any such $F, u', v, w, T, m$ as in the lemma. Clearly, for any $S$, 
$S \subseteq O_w(R_{u}^{v\rightarrow w}(S))$, and so if $R_{u}^{v\rightarrow w}(S) = u'$,
then $S \subseteq O_w(u')$. There are only
${\binom{|T \cap O_{w}(u^{\prime})|}{m}}$ choices for $S \subseteq T \cap O_w(u')$ with $|S| = m$, and 
an identical argument to that of Lemma~\ref{preimage} shows that for each such $S$, 
there is only one $u$ for which $R_{u}^{v\rightarrow w}(S) = u'$.
\end{proof}


Whenever $v,w\in L_{F}(X)$ and $E_{X}(v)\subseteq E_{X}(w)$, clearly $%
R_{u}^{v\rightarrow w}(S)\in L(X)$ for any $F$-sparse set $S\subseteq
O_{v}(u)$; this, along with the use of Lemma \ref{one}, will be the keys to the
counting arguments used to prove our main result for $\mathbb{G}$-subshifts. 

\begin{theorem}
\label{Gtheorem} Let $X$ be a $\mathbb{G-}$subshift, $\mu$ a measure of
maximal entropy of $X$, $F\Subset\mathbb{G}$, and $v,w\in L_{F}(X).$ If $%
E(v)\subseteq E(w)$ then 
\begin{equation*}
\mu(v)\leq\mu(w). 
\end{equation*}
\end{theorem}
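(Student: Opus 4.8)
The plan is to mimic the counting argument from Proposition~\ref{easycase}, but using the much cleaner $F$-sparse replacement machinery (Lemmas~\ref{Ginjective} and~\ref{Gpreimage}) in place of the delicate overlap analysis needed in the one-dimensional case. First I would reduce to the case where $\mu$ is an ergodic MME, since the general case follows by ergodic decomposition and linearity of the entropy map. Using Lemma~\ref{one}, I would fix $N$ so that $\mathbb{G}_N$ is $F$-separating, i.e. every nonidentity $g \in \mathbb{G}_N$ satisfies $g \cdot F \cap F = \varnothing$; this guarantees that any subset of a single coset's worth of occurrences sitting on the $\mathbb{G}_N$-translates of $F$ is automatically $F$-sparse.

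Next I would set up a F\o lner sequence $\{F_n\}$ adapted to $\mathbb{G}_N$ (for instance a F\o lner sequence of $\mathbb{G}_N$, which is also F\o lner in $\mathbb{G}$) so that both the pointwise ergodic theorem and Corollary~\ref{SMBcor} apply. Define a good set of configurations
\begin{equation*}
S_n := \left\{ u \in L_{F_n}(X) \ : \ |O_v(u) \cap T_n| \geq |T_n|(\mu(v) - \delta) \text{ and } |O_w(u) \cap T_n| \leq |T_n|(\mu(w) + \delta) \right\},
\end{equation*}
where $T_n$ is the (automatically $F$-sparse) set of $\mathbb{G}_N$-translates of $F$ lying inside $F_n$, whose cardinality is comparable to $|F_n|/|\mathbb{G}/\mathbb{G}_N|$. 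By the pointwise ergodic theorem applied to $\chi_{[v]}$ and $\chi_{[w]}$, $\mu(S_n) \to 1$, so by Corollary~\ref{SMBcor}, $|S_n| \geq e^{|F_n|(h_{top}(X) - \delta)}$ for large $n$. For each $u \in S_n$ I would form the image set
\begin{equation*}
A_u := \left\{ R_u^{v \rightarrow w}(P) \ : \ P \subseteq O_v(u) \cap T_n, \ |P| = k \right\}
\end{equation*}
for a suitable $k = \varepsilon |T_n|$. Since $E(v) \subseteq E(w)$ and each $P$ is $F$-sparse, every such image lies in $L_{F_n}(X)$, and crucially the shape $F_n$ is unchanged (unlike the $\mathbb{Z}$ case, where lengths differed, which is exactly why the conclusion here is $\mu(v) \le \mu(w)$ with no entropy correction factor).

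The counting then runs through Lemma~\ref{counting}. By $F$-sparse injectivity (Lemma~\ref{Ginjective}), $|A_u| = \binom{|O_v(u) \cap T_n|}{k} \geq \binom{|T_n|(\mu(v)-\delta)}{k}$; and by $F$-sparse preimage control (Lemma~\ref{Gpreimage}), each $u' \in \bigcup_u A_u$ has at most $\binom{|T_n|(\mu(w)+\delta)}{k}$ preimages, using that a replacement on an $F$-sparse set at a separated location creates no new occurrences of $w$ outside $T_n$ that matter for the bound. Combining these with $|\bigcup A_u| \leq |L_{F_n}(X)|$ yields
\begin{equation*}
|L_{F_n}(X)| \geq e^{|F_n|(h_{top}(X) - \delta)} \binom{|T_n|(\mu(v)-\delta)}{k} \binom{|T_n|(\mu(w)+\delta)}{k}^{-1}.
\end{equation*}
Taking logarithms, dividing by $|F_n|$, applying Stirling as in Proposition~\ref{easycase}, and then letting $\delta \to 0$ and $\varepsilon \to 0$ should force $\mu(v) \leq \mu(w)$. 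The main obstacle I anticipate is bookkeeping the constant $|\mathbb{G}/\mathbb{G}_N|$ relating $|T_n|$ to $|F_n|$ and ensuring it cancels cleanly in the Stirling limit; because the entropy term $h_{top}(X)$ is measured per unit of $|F_n|$ while the binomial gains are measured per unit of $|T_n|$, one must check that the per-coordinate densities line up so that no spurious entropy factor survives—precisely the feature that makes the $\mathbb{G}$ conclusion sharper (no exponential factor) than the $\mathbb{Z}$ one. A secondary subtlety is confirming that the replacement on a separated $F$-sparse set genuinely leaves $O_w$ controlled, which should follow directly from the separation property supplied by Lemma~\ref{one} rather than the intricate overlap analysis of Definition~\ref{respect}.
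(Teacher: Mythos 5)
Your overall strategy is the same as the paper's (replacement along $F$-sparse sets, injectivity and preimage control via Lemmas~\ref{Ginjective} and~\ref{Gpreimage}, then Lemma~\ref{counting}, Corollary~\ref{SMBcor}, and Stirling), but there is a genuine gap in how you obtain the density estimates along $T_n$. You define $S_n$ by requiring $|O_v(u)\cap T_n|\geq |T_n|(\mu(v)-\delta)$ and $|O_w(u)\cap T_n|\leq |T_n|(\mu(w)+\delta)$, where $T_n$ is essentially $\mathbb{G}_N\cap F_n$, and you claim the pointwise ergodic theorem gives $\mu(S_n)\to 1$. But the pointwise ergodic theorem for $\mu$ averages over a F\o lner sequence of $\mathbb{G}$, whereas your $T_n$ is a F\o lner-type set for the \emph{subgroup} $\mathbb{G}_N$; a F\o lner sequence of $\mathbb{G}_N$ is not F\o lner in $\mathbb{G}$ (already in $\mathbb{Z}$, the sets $N\mathbb{Z}\cap[0,Nn]$ fail the F\o lner condition for the generator $1$), and, more fundamentally, $\mu$ need not be ergodic for the $\mathbb{G}_N$-subaction even when it is $\mathbb{G}$-ergodic. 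So the ergodic averages of $\chi_{[v]}$ along $T_n$ converge to a conditional expectation that can differ from $\mu(v)$ on a set of positive measure, and your claim $\mu(S_n)\to 1$ is unjustified. The paper avoids exactly this: it defines $S_n$ using the full counts $|O_v(u)|$, $|O_w(u)|$ over $F_n$ (where the ergodic theorem does apply), observes that $|O_v(u)|-|O_w(u)|>3\delta|F_n|$ forces \emph{some} coset $h(u)\in\faktor{\mathbb{G}}{\mathbb{G}_N}$ to carry an excess of $v$-occurrences over $w$-occurrences of size at least $3\delta|F_n|/M$, and then pigeonholes over the finitely many cosets and the boundedly many approximate values $k_n(u)$ to extract a subset $S_n'$ with $|S_n'|\geq |S_n|\delta/M^2$ on which the coset and the count are constant. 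That pigeonhole step is the missing idea; without it (or some other device for controlling occurrences along a single sparse coset), your estimate on $|A_u|$ has no lower bound in terms of $\mu(v)$.

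A secondary, fixable issue: your preimage bound $\binom{|T_n|(\mu(w)+\delta)}{k}$ ignores that each of the $k$ replacements creates a new occurrence of $w$ at a position of $T_n$ itself (plus up to $|F\cdot F^{-1}|$ further new occurrences nearby), so the relevant count in $u'$ is $|O_w(u')\cap T_n|\leq |T_n|(\mu(w)+\delta)+k\,|F\cdot F^{-1}|$. The paper absorbs this by taking $\varepsilon<\delta/|F\cdot F^{-1}|$ and tracking the bound $\delta(k-1)|F_n|/M$ versus $\delta k|F_n|/M$; you would need the analogous correction before the Stirling computation, though with $\varepsilon$ small relative to $\delta$ the limit still comes out as you intend.
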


\begin{proof}
Take $\mathbb{G}$, $X$, $\mu$, $F$, $v$, and $w$ as in the theorem, and
suppose for a contradiction that $\mu(v) > \mu(w)$. Choose any $\delta \in%
\mathbb{Q}_{+}$ with $\delta< \frac{\mu(v) - \mu(w)}{5}$.
Let $F_n$ be a Følner sequence satisfying Theorem~\ref{SMBthm}.
For every $n\in\mathbb{Z}_{+},$ we define 
\begin{equation*}
S_{n}:=\left\{ u\in L_{F_{n}}(X):\left\vert O_{v}(u)\right\vert
\geq\left\vert F_{n}\right\vert (\mu(v)-\delta)\text{ and }\left\vert
O_{w}(u)\right\vert \leq\left\vert F_{n}\right\vert (\mu(w)+\delta)\right\}
. 
\end{equation*}

By the pointwise ergodic theorem (applied to $\chi_{[v]}$ and $\chi_{[w]}$), 
$\mu(S_{n})\rightarrow1$, and then by Corollary~\ref{SMBcor}, 
\begin{equation}
\lim_{n\rightarrow\infty}\frac{\log|S_{n}|}{n}=h_{top}(X).   \label{Snbound}
\end{equation}

Let $N\in\mathbb{Z}_{+}^{d}$ be a number obtained by Lemma \ref{one} that is
minimal in the sense that if any of the coordinates is decreased then it
will not satisfy the property of the lemma.

We note that for every $u\in S_{n}$, $|O_{v}(u)|-|O_{w}(u)|>3\delta|F_{n}|$.
Therefore, for every $u\in S_{n}$, there exists $h(u)\in \faktor{%
\mathbb{G}}{\mathbb{G}_{N}}$ such that 
\begin{equation}
\left\vert O_{v}(u)\cap h(u) \right\vert -\left\vert O_{w}(u)\cap h(u)
\right\vert >\frac{3\delta}{M} |F_{n}|\text{,}   \label{ineq}
\end{equation}
where $M=\left| \faktor{\mathbb{G}}{\mathbb{G}_{N}}\right| .$

For every $u\in S_{n}$, define $k_{n}(u)\in\mathbb{N}$ satisfying $\left\vert
O_{v}(u)\cap h(u)\right\vert \in [ k_{n} (u)|F_{n}|\frac{\delta}{M},$ 
\newline $(k_{n}(u)+1)|F_{n}|\frac{\delta}{M}] $.

Using $M=\left| \faktor{\mathbb{G}}{\mathbb{G}_{N}}\right| $ and the fact
that $3\leq k_{n}(u)\leq\frac{M}{\delta}$, we may choose $S_{n}^{\prime
}\subseteq S_{n}$ with $|S_{n}^{\prime}|\geq\frac{|S_{n}|}{M^{2}/\delta}$, $%
h_{n} \in\faktor{\mathbb{G}}{\mathbb{G}_{N}}$ and $k_{n}\in\mathbb{N}$ such
that for every $u\in S_{n}^{\prime}$ we have $h(u)=h_{n}$ and $k_{n}(u)=k_{n}
$. This implies that for every $u\in S_{n}^{\prime}$ 
\begin{align*}
\left\vert O_{v}(u)\cap h_{n}(u)\right\vert & \geq(k_{n}+1)|F_{n}|\frac{%
\delta}{M}\text{, and hence} \\
\left\vert O_{w}(u)\cap h_{n}(u)\right\vert & \leq(k_{n}-2)|F_{n}|\frac{%
\delta}{M}\text{ (using (\ref{ineq})).}
\end{align*}

By the pigeonhole principle, we may pass to a sequence on which $h_{n} = h$
and $k_{n} = k$ are constant, and for the rest of the proof consider only $n$
in this sequence. Let $\varepsilon\in\mathbb{Q}_{+}$with $\varepsilon <\frac{%
\delta}{|F \cdot F^{-1}|}$. For each $u\in S_{n}^{\prime}$, we define 
\begin{equation*}
A_{u}:=\left\{ R_{u}^{v\rightarrow w}(S):S\subseteq O_{v}(u)\cap h \text{ and }%
\left\vert S\right\vert =\varepsilon\left\vert F_{n}\right\vert /M\right\} 
\end{equation*}
(without loss of generality we may assume $\varepsilon\left\vert
F_{n}\right\vert /M$ is an integer by taking a sufficiently large $n$) $.$

Since $E_{X}(v)\subseteq E_{X}(w)$, we have that $A_{u}\subset L(X).$ By
Lemma~\ref{Ginjective}, 
\begin{equation*}
|A_{u}| \geq{\binom{|O_{v}(u)\cap h|} {\varepsilon\left\vert
F_{n}\right\vert /M}} \geq{\binom{\delta k |F_{n}|/M} {\varepsilon\left\vert
F_{n}\right\vert /M}}. 
\end{equation*}

On the other hand, for every $u^{\prime}\in\bigcup_{u\in S_{n}}A_{u}$, we
have that 
\begin{equation*}
\left\vert O_{w}(u^{\prime})\cap h\right\vert \leq\frac{|F_{n}|}{M}\left(
(k_{n}-2)\delta+\varepsilon|F \cdot F^{-1}|\right) \leq\frac{\delta|F_{n}|}{M}%
(k_{n}-1). 
\end{equation*}
(here, we use $\left\vert O_{w}(u)\cap h(u)\right\vert \leq(k_{n}-2)|F_{n}|%
\frac{\delta}{M}$ plus $\left\vert S\right\vert =\varepsilon \left\vert
F_{n}\right\vert /M$ and the simple fact that a replacement of $v$ by $w$ in 
$u$ can create at most $|F \cdot F^{-1}|$ new occurrences of $w$.) Therefore, by
Lemma~\ref{Gpreimage}, 
\begin{equation*}
\left\vert \left\{ u\in S_{n}^{\prime}:u^{\prime}\in A_{u}\right\}
\right\vert \leq{\binom{\delta(k_{n}-1)|F_{n}|/M}{\varepsilon\left\vert
F_{n}\right\vert /M}.} 
\end{equation*}

By combining the two inequalities, we see that 
\begin{equation}
|L_{n}(X)|\geq\left\vert \bigcup_{u\in S_{n}^{\prime}}A_{u}\right\vert
\geq|S_{n}^{\prime}|{\binom{\delta k_{n}|F_{n}|/M}{\varepsilon\left\vert
F_{n}\right\vert /M}}{\binom{\delta(k_{n}-1)|F_{n}|/M}{\varepsilon\left\vert
F_{n}\right\vert /M}}^{-1}.
\end{equation}
Now, we take logarithms of both sides, divide by $\left\vert
F_{n}\right\vert $, and let $n$ approach infinity (along the earlier defined
sequence). Then we use the definition of entropy, the inequality $%
|S_{n}^{\prime}|\geq \frac{|S_{n}|}{M^{2}/\delta}$, (\ref{Snbound}), and
Stirling's approximation to yield 
\begin{align*}
h_{top}(X) & \geq h_{top}(X)+\frac{\varepsilon}{M}\bigg[\left( \frac{\delta k%
}{\varepsilon}\log\frac{\delta k}{\varepsilon}-\left( \frac{\delta k}{%
\varepsilon}-1\right) \log\left( \frac{\delta k}{\varepsilon}-1\right)
\right) \\
& -\left( \frac{\delta(k-1)}{\varepsilon}\log\frac{\delta(k-1)}{\varepsilon }%
-\left( \frac{\delta(k-1)}{\varepsilon}-1\right) \log\left( \frac {%
\delta(k-1)}{\varepsilon}-1\right) \right) \bigg].
\end{align*}

Since the function $x \log x - (x - 1) \log(x-1)$ is strictly increasing for 
$x > 1$, the right-hand side of the above is strictly greater than $%
h_{top}(X)$, a contradiction. Therefore, our original assumption does not hold
and hence $\mu(v) \leq\mu(w)$.
\end{proof}

\subsection{Applications to hereditary subshifts}

\label{hered}

One class of $\mathbb{G-}$subshifts with many pairs of words satisfying
$E_{X}(v)\subsetneq E_{X}(w)$, allowing for the use of Theorem~\ref{Gtheorem}, are the hereditary subshifts 
(introduced in \cite{KL1}).

A partial order $\leq$ on a finite set $\mathcal{A}$ induces a partial order
on $\mathcal{A}^{n}$ and $\mathcal{A}^{\mathbb{G}}$ (coordinatewise) which
will also be denoted by $\leq$. When $\mathcal{A=}\left\{  0,1...,m\right\}  $
we will always use the linear order $0 \leq1 \leq\ldots\leq m$.

\begin{definition}
	Let $X\subseteq\mathcal{A}^{\mathbb{G}}$ be a subshift and $\leq$ a partial
	order on $\mathcal{A}$. We say $X$ is $\leq-$\textbf{hereditary (or simply
		hereditary)} if for every $x\in\mathcal{A}^{\mathbb{G}}$ such that there
	exists $y\in X$ such that $x\leq y$ then $x\in X.$
\end{definition}

Examples of hereditary shifts include $\beta-$shifts \cite{Kw},
$\mathscr{B}-$free shifts (\cite{KLW}), spacing shifts (\cite{LZ}),
multi-choice shifts (\cite{LMP}) and bounded density shifts (\cite{S}). Many
of these examples have a unique measure of maximal entropy, but not every
hereditary subshift has this property (see \cite{KLW})
.

This definition immediately implies that whenever $x \leq y$ for $x, y \in
L(X)$, $E_{X}(y) \subseteq E_{X}(x)$, yielding the following corollary of
Theorem~\ref{hardcase}.

\begin{corollary}
	Let $X$ be a $\leq-$hereditary $\mathbb{G-}$subshift, $\mu$ a measure of maximal entropy,
	and $v,w\in L_{n}(X)$ for some $n \in\mathbb{N}.$ If $u\leq v$ then
	$\mu(v)\leq\mu(u).$
\end{corollary}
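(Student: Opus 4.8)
The plan is to deduce this directly from the main theorem of this section, Theorem~\ref{Gtheorem}; essentially all of the work is to translate the coordinatewise order $u \leq v$ into the reverse containment of extender sets that Theorem~\ref{Gtheorem} requires. Write $F$ for the common shape of $u$ and $v$ (so $v \in L_F(X)$, and I will see in a moment that $u \in L_F(X)$ as well). I first claim that $u \leq v$ forces $E_X(v) \subseteq E_X(u)$, which is precisely the observation recorded in the sentence preceding the corollary.

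To verify this claim, I would fix an arbitrary configuration of $E_X(v)$, say $x|_{F^c}$ with $x \in [v]$ (so $x \in X$ and $x_F = v$), and build from it a witness for membership in $E_X(u)$. Define the point $y \in \mathcal{A}^{\mathbb{G}}$ by $y_F = u$ and $y|_{F^c} = x|_{F^c}$. On $F$ we have $u \leq v$, and off $F$ the two points agree, so $y \leq x$ coordinatewise. Since $x \in X$ and $X$ is $\leq$-hereditary, it follows that $y \in X$; in particular $u = y_F \in L_F(X)$, so the pair $(v,u)$ genuinely satisfies the hypotheses of Theorem~\ref{Gtheorem}. Moreover $y|_{F^c} = x|_{F^c}$ now exhibits $x|_{F^c}$ as an element of $E_X(u)$. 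As $x|_{F^c}$ was an arbitrary element of $E_X(v)$, this establishes $E_X(v) \subseteq E_X(u)$.

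With the containment in hand, I would apply Theorem~\ref{Gtheorem} to the pair $(v,u)$ --- taking the role of $w$ in that theorem to be $u$ --- which immediately yields $\mu(v) \leq \mu(u)$, as desired. There is no substantial obstacle here beyond bookkeeping: the single point that must be watched is the direction of both the order and the containment, since heredity \emph{reverses} them (the smaller word $u$ has the \emph{larger} extender set). Consequently one must feed $u$, not $v$, into the $w$-slot of Theorem~\ref{Gtheorem}; getting this orientation right is the only place a careless argument could go wrong.
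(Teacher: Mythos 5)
Your proposal is correct and follows exactly the route the paper takes: the paper dispatches this corollary in one line by noting that $u \leq v$ immediately gives $E_X(v) \subseteq E_X(u)$ (via precisely the coordinatewise-replacement argument you spell out) and then invoking Theorem~\ref{Gtheorem} with $u$ in the role of $w$. Your only added value is making explicit that the modified point $y$ lies in $X$ by heredity, which also certifies $u \in L_F(X)$; that is a worthwhile detail the paper leaves implicit.
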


In particular, if $\mathcal{A=}\left\{  0,1...,m\right\}$, then $\mu(m)\leq\mu(m-1)...\leq\mu(1)\leq\mu(0)$.

Having $u\leq v$ $\ $is sufficient but not necessary for $E(v)\subseteq E(w).
$ In particular, for $\beta-$shifts and bounded density shifts, there are many
other pairs (with different lengths) where this happens. This is due to an
additional property satisfied by these hereditary shifts.

\begin{definition}
	Let $X\subseteq\left\{  0,1,...,m\right\}  ^{\mathbb{Z}}$ be a hereditary
	$\mathbb{Z}$-subshift. We say $X$ is \textbf{$i$-hereditary} if for every $u\in L_{n}(X)$
	and $u^{\prime}$ obtained by inserting a $0$ somewhere in $u$, it is the case
	that $u^{\prime}\in L_{n+1}(X)$.
\end{definition}

In particular, $\beta-$shifts and bounded density shifts are $i$-hereditary,
but not every spacing shift is $i$-hereditary. It's immediate that any
$i$-hereditary shift satisfies $E_{X}(0^{j}) \subseteq E_{X}(0^{k})$ whenever
$j \geq k$. We can get equality if we assume the additional property of specification.

\begin{definition}
	A $\mathbb{Z}$-subshift $X$ has the \textbf{specification property (at distance }%
	$N$)\textbf{\ }if for every $u,w\in L(X)$ there exists $v\in L_{N}(X)$ such
	that $uvw\in L(X).$
\end{definition}

Clearly, if $X$ is hereditary and has specification property at distance $N$,
then $u0^{N}w$ and $u0^{N+1}w\in L(X)$ for all $u,w\in L(X)$, and so in this
case $E_{X}(0^{N})=E_{X}(0^{N+1})$. We then have the following corollary of
Theorem~\ref{hardcase}.

\begin{corollary}
	\label{hereditary}Let $X\subseteq\left\{  0,1,...,m\right\}  ^{\mathbb{Z}}$ be
	a i-hereditary $\mathbb{Z-}$subshift$.$ Then for every $n\in\mathbb{Z}_{+}$
	\[
	h_{top}(X)\geq\log\frac{\mu(0^{n})}{\mu(0^{n+1})}.
	\]

	Furthermore, if $X$ has the specification property at distance $N$, then
	\[
	h_{top}(X)=\log\frac{\mu(0^{N})}{\mu(0^{N+1})}.
	\]
	
\end{corollary}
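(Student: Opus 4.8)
The plan is to read off both statements from the two main $\mathbb{Z}$-results, Theorem~\ref{hardcase} and Corollary~\ref{maincor}, applied to the pair $v=0^{n}$, $w=0^{n+1}$. The only real work is to pin down the inclusions between the extender sets $E_{X}(0^{n})$ and $E_{X}(0^{n+1})$, being careful that for words of different lengths these live inside the length-dependent identification described in Section~\ref{defs}. Throughout I would assume $h_{top}(X)>0$ so that Theorem~\ref{hardcase} applies; this is the case of interest, and the degenerate zero-entropy situation can be treated separately.

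For the first (inequality) statement, I would first verify that $i$-heredity yields $E_{X}(0^{n})\subseteq E_{X}(0^{n+1})$. Given $y\in E_{X}(0^{n})$, pick $x\in X$ with $x_{[0,n)}=0^{n}$ whose exterior is identified with $y$, and set $x'=x_{(-\infty,0)}\,0^{n+1}\,x_{[n,\infty)}$, i.e. insert one extra $0$ into the central run. Every finite subword of $x'$ is either a subword of $x$ or is obtained from one by inserting a single $0$, so lies in $L(X)$ by $i$-heredity; hence $x'\in X$. Since $x'_{[0,n+1)}=0^{n+1}$ and the exterior of $x'$, now read off with the length-$(n+1)$ identification, is again $y$, we get $y\in E_{X}(0^{n+1})$. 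Applying Theorem~\ref{hardcase} with $v=0^{n}$ and $w=0^{n+1}$ then gives
\[
\mu(0^{n})\le\mu(0^{n+1})\,e^{h_{top}(X)(|w|-|v|)}=\mu(0^{n+1})\,e^{h_{top}(X)},
\]
and, provided $\mu(0^{n+1})>0$, rearranging and taking logarithms yields $h_{top}(X)\ge\log\frac{\mu(0^{n})}{\mu(0^{n+1})}$, as claimed.

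For the equality statement under specification at distance $N$, I would upgrade the above inclusion to an equality $E_{X}(0^{N})=E_{X}(0^{N+1})$ by establishing the reverse containment $E_{X}(0^{N+1})\subseteq E_{X}(0^{N})$. This is where specification enters: given $y\in E_{X}(0^{N+1})$ witnessed by $x$, the candidate witness for $y\in E_{X}(0^{N})$ is $x'=x_{(-\infty,0)}\,0^{N}\,x_{[N+1,\infty)}$, i.e. $x$ with one $0$ removed from the run. To see $x'\in X$ it suffices to check its finite subwords lie in $L(X)$; each has the form $u\,0^{N}\,w$ with $u,w$ subwords of $x$, and specification produces some $\tilde{v}\in L_{N}(X)$ with $u\tilde{v}w\in L(X)$, whence $u\,0^{N}\,w\in L(X)$ because $0^{N}\le\tilde{v}$ coordinatewise and $X$ is hereditary. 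Thus $x'\in X$, giving $y\in E_{X}(0^{N})$ and hence equality of the two extender sets. Corollary~\ref{maincor} now applies to $v=0^{N}$, $w=0^{N+1}$ and gives $\mu(0^{N})=\mu(0^{N+1})\,e^{h_{top}(X)}$, i.e. $h_{top}(X)=\log\frac{\mu(0^{N})}{\mu(0^{N+1})}$.

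The main obstacle is not any single hard estimate, since the heavy lifting is done by Theorem~\ref{hardcase} and Corollary~\ref{maincor}, but rather the careful bookkeeping of the two extender-set inclusions across words of different lengths: the identification of $E_{X}(u)$ with a bi-infinite sequence depends on $|u|$, so one must check that inserting, respectively deleting, a single $0$ in the central run reproduces exactly the same exterior sequence under the two different identifications, and that the modified point genuinely lies in $X$. A secondary point to address is the positivity $\mu(0^{N+1})>0$ needed to divide: under specification $X$ is entropy minimal, so its unique measure of maximal entropy is fully supported and $\mu(0^{N+1})>0$, and more generally the inequality is only meaningful when this denominator is nonzero.
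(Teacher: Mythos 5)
Your proposal is correct and takes essentially the same route as the paper, whose proof of this corollary is just the two extender-set observations in the preceding paragraphs combined with Theorem~\ref{hardcase} and Corollary~\ref{maincor}, exactly as you do (insertion of a $0$ for the containment $E_{X}(0^{n})\subseteq E_{X}(0^{n+1})$, and specification plus heredity for the reverse containment at distance $N$). Two small remarks: your containment is stated in the correct direction, whereas the paper's prose claim ``$E_{X}(0^{j})\subseteq E_{X}(0^{k})$ whenever $j\geq k$'' has the inequality reversed; and your explicit caveats about needing $h_{top}(X)>0$ for Theorem~\ref{hardcase} and $\mu(0^{n+1})>0$ to take logarithms are points the paper leaves implicit.
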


We note that if $X$ has the specification property at distance $N$, then it
also has it at any larger distance. Therefore, the final formula can be
rewritten as
\begin{multline*}
h_{top}(X)=\lim_{N\rightarrow\infty}\log\frac{\mu(0^{N})}{\mu(0^{N+1})}%
=\lim_{N\rightarrow\infty}-\log\mu(x(0)=0\ |\ x_{[-N,-1]}=0^{N})\\
=-\log\mu(x(0)=0\ |\ x_{(-\infty,-1]}=0^{\infty}),
\end{multline*}
recovering a formula (in fact a more general one for topological pressure of
$\mathbb{Z}^{d}$ SFTs) proved under different hypotheses in \cite{MP}.

\end{document}